\theoremstyle{plain}
\newtheorem{thm}{Theorem}[section]
\newtheorem{lemma}[thm]{Lemma}
\newtheorem{cor}[thm]{Corollary}
\newtheorem{claim}{Claim}
\theoremstyle{definition}
\newtheorem{example}[thm]{Example}
\newtheorem{examples}[thm]{Examples}
\theoremstyle{remark}
\newenvironment{thmenumerate}{
\begin{enumerate}[label=\textup{(\roman*)}, widest=(ii), leftmargin=9mm,itemsep=1mm,topsep=0mm]}{
\end{enumerate}}
\newenvironment{txtitemize}{
\begin{itemize}[leftmargin=6mm,itemsep=1mm,topsep=0mm]}{\end{itemize}}
\DeclareMathOperator{\Av}{Av}
\DeclareMathOperator{\Path}{Path}
\DeclareMathOperator{\pofw}{\Pi}
\DeclareMathOperator{\wofp}{\text{W}}
\DeclareMathOperator{\paofpe}{\Pi}
\DeclareMathOperator{\peofpa}{\Sigma}
\DeclareMathOperator{\perm}{perm}
\DeclareMathOperator{\Fact}{Fact}
\newcommand{\Bruijn}{\mathcal{B}}
\newcommand{\cl}{\mathfrak{c}}
\newcommand{\bl}{\mathfrak{b}}
\definecolor{lightgray}{rgb}{0.83, 0.83, 0.83}
\title[Consecutive orderings on words and permutations]{Atomicity and well quasi-order for consecutive orderings on words and permutations}
\author{M. McDevitt}
\author{N. Ru\v{s}kuc}
\address{School of Mathematics and Statistics, University of St Andrews, St Andrews, Scotland, UK}
\email{$\{$mm241,nr1$\}$@st-andrews.ac.uk}
\keywords{antichain, digraph, path, joint embedding property}
\subjclass[2010]{05A05 (primary), 05C20, 06A07, 68R05}
\date{\today}
\begin{document}
\maketitle

\begin{abstract}
Algorithmic decidability is established for two order-theoretic properties of downward closed subsets defined by finitely many obstructions in two infinite posets.
The properties under consideration are: (a) being atomic, i.e. not being decomposable as a union of two downward closed proper subsets, or, equivalently, satisfying the joint embedding property;
and (b) being well quasi-ordered.
The two posets are: (1) words over a finite alphabet under the consecutive subword ordering; and (2) finite permutations under the consecutive subpermutation ordering.
Underpinning the four results are characterisations of atomicity and well quasi-order for the subpath ordering on paths of a finite directed graph.
 \end{abstract}

\section{Introduction}
\label{sec:intro}

The purpose of this paper is to prove the following results:

\begin{thm}
\label{thm:words}
It is algorithmically decidable, given a finite set of words $w_1,\dots,w_n$ over a finite alphabet $A$,
whether the set $\Av(w_1,\dots,w_n)$ of all words which avoid each of $w_1,\dots,w_n$ as a consecutive subword
is:
\begin{thmenumerate}
\item
\label{it:words-ato}
atomic;
\item
\label{it:words-wqo}
well quasi-ordered.
\end{thmenumerate}
\end{thm}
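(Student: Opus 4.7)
The strategy is to reduce to the analogous decidability question for the subpath ordering on a finite directed graph, which, as the abstract indicates, is addressed by the underpinning graph-theoretic results of the paper. The bridge is a de Bruijn-style construction which associates to the input data $(A; w_1,\dots,w_n)$ a finite digraph $G$ whose paths correspond, in an order-preserving manner, to the long words of $\Av(w_1,\dots,w_n)$.

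Concretely, with $k = \max_i |w_i| - 1$, I take the vertices of $G$ to be all length-$k$ words over $A$ that avoid every $w_i$ as a consecutive subword, and I put a directed edge from $u$ to $v$ precisely when $u$ and $v$ overlap in their last/first $k-1$ letters to form a length-$(k+1)$ word that still avoids every $w_i$. A routine verification then shows that reading off the letters along a directed walk via the overlaps gives a bijection between the paths in $G$ and the words of length at least $k$ in $\Av(w_1,\dots,w_n)$, and that this bijection carries the subpath order to the consecutive subword order.

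This accounts for all but finitely many elements of $\Av(w_1,\dots,w_n)$, namely the words of length less than $k$. For part (ii) this discrepancy is harmless, since well quasi-order is preserved by adding or removing finitely many elements of a poset. For part (i), one must additionally check joint embeddability among the short words and between the short words and the path part of the poset; each such check reduces to a finite reachability query in $G$, so it can be carried out algorithmically.

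Given $G$, parts (i) and (ii) therefore reduce to testing the path poset of $G$ for atomicity and well quasi-order respectively, supplemented by the short-word bookkeeping just described. Both tests are handled by the earlier graph-theoretic characterisations announced in the abstract. The main obstacle lies not in the present theorem but in establishing those graph characterisations in the first place: once they are in hand, the reduction above yields Theorem \ref{thm:words} essentially by construction, with only the delicate treatment of short words in (i) requiring any extra care.
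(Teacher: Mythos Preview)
Your proposal is correct and follows essentially the same strategy as the paper: reduce via a de Bruijn-type factor graph to the subpath poset $\Path(G)$, invoke the graph-theoretic characterisations of atomicity and wqo for $\Path(G)$ (Theorems~\ref{thm:graphs-ato} and~\ref{thm:grwqo}), and deal with the finitely many short words by a direct finite check. The only minor imprecision is terminological: the short-word condition for atomicity is not really a ``reachability query'' but the check that every short word in $C$ is a factor of some vertex of $G$ (this is condition~\ref{it:fat2} of Theorem~\ref{thm:f-ato}); apart from this, and your harmless choice of de Bruijn dimension $\bl-1$ rather than the paper's $\bl$, your outline matches Theorems~\ref{thm:f-ato} and~\ref{thm:f-wqo}.
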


\begin{thm}
\label{thm:perms}
It is algorithmically decidable, given a finite set of permutations $\beta_1,\dots,\beta_n$,
whether the set $\Av(\beta_1,\dots,\beta_n)$ of all permutations which avoid each of $\beta_1,\dots,\beta_n$ as a consecutive subpermutation
is:
\begin{thmenumerate}
\item
\label{it:perms-ato}
atomic;
\item
\label{it:perms-wqo}
well quasi-ordered.
\end{thmenumerate}
\end{thm}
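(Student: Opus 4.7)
The plan is to mirror the approach used for words in Theorem~\ref{thm:words} and reduce both parts to the corresponding decidability questions for the subpath ordering on paths of a finite digraph, which the paper treats earlier. Setting $k = \max_i |\beta_i|$, I would construct a finite digraph $\mathcal{G} = \mathcal{G}(\beta_1,\dots,\beta_n)$ whose vertices are the patterns of length $k-1$ avoiding every $\beta_i$ of length at most $k-1$, and whose edges are the patterns of length $k$ avoiding every $\beta_i$, each edge directed from the pattern of its first $k-1$ entries to the pattern of its last $k-1$ entries (so parallel edges are permitted). A permutation $\pi \in \Av(\beta_1,\dots,\beta_n)$ of length at least $k$ then determines a walk in $\mathcal{G}$ by recording its successive length-$k$ window patterns as the edges traversed.

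The key technical point, and the main obstacle absent from the word setting, is that a permutation is not in general recoverable from its sliding-window pattern sequence: for instance $2143$ and $3142$ both produce the sequence $213,\,132$ of length-$3$ window patterns. The bulk of the proof would therefore be devoted to tuning the edge data of $\mathcal{G}$ (using length-$k$ rather than length-$(k-1)$ patterns as edges, and if necessary enriching them with further rank-insertion information) so that the assignment $\pi \mapsto \text{(walk of }\pi\text{)}$ becomes a bijection between $\Av(\beta_1,\dots,\beta_n)$ and the set of paths in $\mathcal{G}$, and an order-isomorphism with respect to the consecutive subpermutation ordering on one side and the subpath ordering on the other. The finitely many permutations in the class of length below $k$ can be accommodated separately or by adjoining auxiliary source vertices to $\mathcal{G}$.

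With such a correspondence in place, $\Av(\beta_1,\dots,\beta_n)$ is atomic, respectively well quasi-ordered, if and only if the path set of $\mathcal{G}$ is. Since $\mathcal{G}$ is effectively constructible from $\beta_1,\dots,\beta_n$ and since both graph-theoretic properties are decidable by the earlier results, both parts of the theorem follow. I expect the verification of the order-isomorphism property to be the most delicate step: one must check that every subpath of the walk of $\pi$ genuinely arises as the walk of some consecutive subpermutation of $\pi$, and conversely that no ``phantom'' path is created by the identification of different extensions agreeing on window patterns, which is precisely what the correct choice of edge data should prevent.
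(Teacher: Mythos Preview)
Your plan to reduce to the digraph path results is the right starting point, but the central move---enriching the edge data so that the map from permutations to walks becomes an order-isomorphism onto $\Path(\mathcal{G})$ for some \emph{finite} digraph $\mathcal{G}$---cannot succeed in general. The obstruction is growth: the number of walks of length $n$ in a finite (multi-)digraph is at most exponential in $n$, whereas for most classes $\Av(\beta_1,\dots,\beta_n)$ the number of permutations of length $n$ grows like a constant times $n!$ times an exponential. For instance, $\Av(1234)$ under consecutive containment has super-exponential growth (its exponential generating function has finite radius of convergence) and its factor graph is strongly connected, so it is atomic; yet no finite amount of rank-insertion data attached to edges can make the correspondence bijective. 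The phantom paths you worry about are not an artefact of a poor encoding; they are intrinsic, and the failure of bijectivity genuinely affects both properties---there are classes (Example~\ref{exa:B321213x}) whose factor graph is a bicycle, so $\Path(\Gamma(C))$ is atomic and wqo, while $C$ itself is neither.

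The paper therefore does \emph{not} seek an order-isomorphism. It keeps the many-to-one map $\sigma\mapsto\paofpe(\sigma)$ into the factor graph $\Gamma(C)$ and instead classifies precisely when the ambiguity (a path $\pi$ with $|\peofpa(\pi)|>1$) obstructs atomicity or wqo. For atomicity (Theorem~\ref{thm:permatom}) the criterion adds to the word case the requirement that a bicycle have \emph{no ambiguous paths}; decidability of this requires bounding the length of minimal ambiguous paths (Lemmas~\ref{lem:lins}--\ref{lem:ambdec1}). For wqo (Theorem~\ref{thm:permwqo}) one must, beyond the bicycle decomposition, rule out ambiguous \emph{cycles} and a further obstruction called \emph{splittable pairs}; showing these conditions suffice takes a structural analysis of the permutations traced by a bicycle (Lemmas~\ref{la:evex}--\ref{la:core}). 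None of this machinery is available if one assumes a bijection with a path poset.
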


These results have been obtained during the first author's Ph.D. research, and appear in his dissertation \cite{mcdevitt-thesis}.
While the proofs in this paper are modelled on those from  \cite{mcdevitt-thesis}, some aspects are substantially different.
The well quasi-order result for words (Theorem \ref{thm:words} \ref{it:words-wqo}) had been proved earlier by Atminas, Lozin and Moshkov  \cite{atminas17} in greater generality, namely for all sets of the form $\Av(B)$ where $B$ is a regular set of obstructions.
We include it here for completeness, because it, and our new proof, fit neatly into the overall picture we present.

The two consecutive orderings featuring in the above theorems will be introduced in Sections \ref{sec:Bruijn} and \ref{sec:perms-ato}, respectively.
In what follows we introduce the notion of avoidance and the properties of atomicity and well quasi-order, which apply to arbitrary posets (and indeed quasi-ordered sets, but we will not require this extra level of generality).

Let $(A,\leq)$ be a poset.
For $B\subseteq A$, the \emph{avoidance set} of $B$ is
\[
\Av(B):=\{ a\in A\::\: b\nleq a \text{ for all } b\in B\},
\]
i.e. the set of all elements of $A$ which \emph{avoid} all elements of $B$.
This set is \emph{downward closed} in the sense that 
$x\in \Av(B)$ and $y\leq x$ imply $y\in \Av(B)$.
Conversely, every downward closed set $C$ has the form $\Av(B)$, e.g. for $B:=A\setminus C$.
Furthermore, if the set $A$ is \emph{well-founded}, meaning it does not contain infinite strictly decreasing sequences, 
then $B$ can also be chosen as the set of minimal elements of $A\setminus C$; this is an antichain, and is called the
 \emph{basis} for $C$. Note that the basis is unique by definition.
If the basis of $C$ is finite we say that $C$ is \emph{finitely based}.

Atomicity and well quasi-order are fundamental structural properties in the theory of posets.
The former arises from the decompositional approach, where one wants to express a downward closed  set as a union of two proper such subsets. The downward closed sets which cannot be decomposed in this way are called \emph{atomic}.
It is a folklore result (and can be proved by adapting \cite[Theorem 3.19]{vatterx}) that a downward closed set $C$ is atomic if and only if it satisfies the \emph{joint embedding property} (\emph{JEP}), or \emph{join property} for short: for all $x,y\in C$ there exists $z\in C$ such that
$x\leq z$ and $y\leq z$.

A poset $(A,\leq)$ is \emph{well quasi-ordered} (\emph{wqo} for short) if it is well-founded and has no infinite antichains.
This is a generalisation of the concept of well ordering for linear orders, and generally applies to quasi-orders (i.e. sets equipped with a reflexive, transitive binary relation). Even though in this paper we work exclusively with posets, we have decided to retain `quasi' in the name for compatibility with literature, although we note that some authors choose the name partial well-order, or the related adjective well-partially-ordered, e.g. \cite[Section 3]{vatterx}.

With the above terminology our main results can be phrased as saying that the properties of being atomic or being wqo are algorithmically decidable for finitely based downward closed sets of words and permutations under the contiguous subword/subpermutation orderings.

Both atomicity and wqo are concepts that have been studied extensively throughout combinatorics, and in particular for words and permutations, albeit under two different, but related orderings. Specifically, Higman's celebrated result \cite{higman52} asserts that the set $A^\ast$ of all words over a finite alphabet $A$ is wqo under the (non-contiguous, scattered) subword ordering. 
Thus all downward closed classes of $A^\ast$ are finitely based and wqo, and the property of being wqo is trivially decidable.
By way of contrast the atomicity problem for $A^\ast$ is open to the best of our knowledge.
As for permutations, the most natural and most extensively studied ordering is that of (non-contiguous) pattern involvement. Decidability of both atomicity and wqo under this ordering are major open problems in the area,
and are surveyed in considerable detail in \cite{jel17} and \cite{shnr15}, respectively.
In fact, wqo questions are a major topic throughout combinatorics, with the Robertson--Seymour Graph Minor theorem \cite{minor} being the most famous example.
A decidability point of view is espoused and surveyed in~\cite{che11}.

Consecutive patterns in permutations have attracted considerable interest over the years, mostly from the enumeration point of view.
We refer the reader to \cite{eli16} for an informative survey, and to \cite{dwyer18} for an example of recent developments.
In addition to the interest in their own right, we view the developments in this paper as a useful stepping stone towards the consideration of the atomicity and wqo problems for the general pattern involvement.
It is, however, worth mentioning that recently Braunfeld \cite{braunfeld:thesis,braunfeld} has proved some results which suggest that atomicity problem may well be undecidable for the general pattern involvement.
Specifically, he proved that the atomicity is undecidable for the class of finite graphs under the induced subgraph ordering and also for three-dimensional permutations, which are structures with three linear orders.

The proofs of our main results are to be found as follows: Theorem \ref{thm:words} in  Section \ref{sec:words-ato-wqo};
Theorem \ref{thm:perms} \ref{it:perms-ato} in Section \ref{sec:perms-ato}; and Theorem \ref{thm:perms} \ref{it:perms-wqo} in Section \ref{sec:perms-wqo}.
Each decidability result is actually a more or less immediate corollary of a result that gives an explicit characterisation of the property in question; these are Theorem \ref{thm:f-ato} for atomicity on words, 
Theorem \ref{thm:f-wqo} for wqo on words, Theorem \ref{thm:permatom} for atomicity on permutations, and Theorem \ref{thm:permwqo} for wqo on permutations.
The underlying general strategy in all proofs is the same, and relies on reducing the problem to the analogous problem for the set of all paths in a finite directed graph under the subpath ordering,
and Sections \ref{sec:graphs-ato}--\ref{sec:Bruijn} contain the necessary preparations. 
Specifically, in Section  \ref{sec:graphs-ato} we characterise atomicity for the subpath ordering, in Section \ref{sec:graphs-wqo} we do the same for wqo, and in Section \ref{sec:Bruijn} we describe the digraph associated with a finitely based downward closed set of words.

We conclude this section with a few notational conventions.
We use $\mathbb{N}$, $\mathbb{N}_0$, $\mathbb{Z}$ and $\mathbb{R}$ to denote the sets of natural numbers, non-negative integers, integers and real numbers, respectively.
We use the interval notation to mean intervals of integers.
The intervals with end-points $1$ and $n$ will be abbreviated to refer to $n$ only, e.g.
$[n]=[1,n]=\{1,2,\dots,n\}$, $[n)=[1,n)=\{1,\dots,n-1\}$, etc.

\section{Subpath ordering on directed graphs: atomicity}
\label{sec:graphs-ato}

A \emph{directed graph} (\emph{digraph}) is a set $G$ on which a binary relation $E=E(G)\subseteq G\times G$ is defined.
We often write $u\rightarrow v$ instead of $(u,v)\in E$.
A \emph{path} is a sequence $\pi$ of vertices $v_1,\dots, v_k$ such that $v_i\rightarrow v_{i+1}$, $i=1,\dots, k-1$;
we write $\pi=v_1\rightarrow v_2\rightarrow\dots\rightarrow v_k$.
The vertex $v_1$ is the \emph{start vertex} or \emph{initial vertex}; $v_k$ is the \emph{end vertex} or the \emph{terminal vertex}.
The number $k-1$ of edges the path traverses is its \emph{length}.
Note that we allow the \emph{empty path} $(v_1)$ starting and ending at $v_1$, with no edges and having length $0$; it is distinct from a \emph{loop} $v_1\rightarrow v_1$, which has length $1$.
Two paths $\pi,\tau$ can be \emph{concatenated}, provided the terminal vertex of $\pi$ equals the initial vertex of $\tau$.
The path $\pi$ is said to be \emph{simple}, if all its vertices are distinct.
It is a \emph{cycle} if $k\geq 2$ and $v_1=v_k$; and it is a \emph{simple cycle} if $v_1,\dots,v_{k-1}$ are distinct.
For any $1\leq i\leq j\leq k$, the path $v_i\rightarrow v_{i+1}\rightarrow\dots\rightarrow v_j$ is a \emph{subpath} of $\pi$.
This defines a partial order on the set $\Path(G)$ of all paths in $G$, which we call the \emph{subpath ordering} and denote $\leq_p$.

In this section we show that the atomicity problem is decidable for 
the collection of all posets of the form $\Path(G)$, where $G$ is an arbitrary finite digraph.
In order to state the requisite explicit characterisation of atomicity we need some more definitions.

Given a digraph $G$ we define a binary relation $\rightarrow^\ast$, where $u\rightarrow^\ast v$ means that there exists a path from $u$ to $v$.
This relation is reflexive and transitive, i.e. it is a quasi-order. 
The equivalence relation associated with the quasi-order $\rightarrow^\ast$
is defined by
$u\leftrightarrow^\ast v$ if and only if $u\rightarrow^\ast v$ and $v\rightarrow^\ast u$.
The equivalence classes of this relation are called the \emph{strongly connected components} of $G$.
If there is only one such component, $G$ is said to be \emph{strongly connected}.
The relation $\rightarrow^\ast$ induces a partial ordering on the set of all strongly connected components.

A \emph{bicycle} is a special digraph consisting of two vertex disjoint
simple cycles $\pi_1$ and $\pi_2$, connected by a simple directed path $\tau$ from $\pi_1$ to $\pi_2$,
such that no internal vertex of $\tau$ belongs to either $\pi_1$ or $\pi_2$; see Figure \ref{fig:bic}.
We use the terms \emph{initial cycle}, \emph{terminal cycle} and the \emph{connecting path} to refer to $\pi_1$, $\pi_2$ and $\tau$, respectively.
For convenience, we will allow either or both $\pi_1$ and/or $\pi_2$ to be absent,
and we will also allow a single cycle as a bicycle.
However, if both cycles are present, we require the connecting path to be non-trivial, i.e. the two cycles are not allowed to share a point.

\begin{figure}
\begin{center}

\begin{tikzpicture}

\foreach \x in {0,72,...,288}
   {\draw [-{Stealth [scale=1.4]}] (\x:10mm) arc (\x:\x+68:10mm);}

\foreach \x in {0,72,...,288}
   {\draw [fill] (\x:10mm) circle (0.7mm);}

\foreach \x in {0,60,...,300}
   {\draw [-{Stealth [scale=1.4]},shift={(50mm,0mm)}] (\x:10mm) arc (\x:\x+56:10mm);}

\foreach \x in {0,60,...,300}
   {\draw [fill,shift={(50mm,0mm)}] (\x:10mm) circle (0.7mm);}
   
\draw [fill] (20mm,0mm) circle (0.7mm);
\draw [fill] (30mm,0mm) circle (0.7mm);

\draw [-{Stealth [scale=1.4]}] (10mm,0mm)--(19mm,0mm);
\draw [-{Stealth [scale=1.4]}] (20mm,0mm)--(29mm,0mm);
\draw [-{Stealth [scale=1.4]}] (30mm,0mm)--(39mm,0mm);
 
\end{tikzpicture}

\caption{A bicycle.}
\label{fig:bic}
\end{center}

\end{figure}
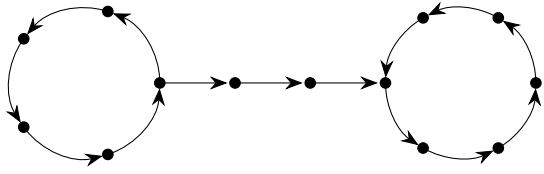

We are now ready to state and prove our atomicity result:

\begin{thm}
\label{thm:graphs-ato}
For a digraph $G$, the poset $\Path(G)$ of all directed paths in $G$ under the subpath ordering is
 atomic if and only if at least one of the following conditions holds:
\begin{thmenumerate}
\item
\label{it:grati}
$G$ is strongly connected; or
\item
\label{gratii}
$G$ is a bicycle.
\end{thmenumerate}
\end{thm}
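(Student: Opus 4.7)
The plan is to work throughout with atomicity in its equivalent JEP form: for any $\alpha, \beta \in \Path(G)$ one seeks a common upper bound $\sigma$ with $\alpha, \beta \leq_p \sigma$.

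For the direction ``strongly connected or bicycle $\Rightarrow$ atomic'' I would treat the two cases separately. If $G$ is strongly connected, a path $\rho$ exists from the terminal vertex of $\alpha$ to the initial vertex of $\beta$, and then $\sigma := \alpha \cdot \rho \cdot \beta$ is a common upper bound. If $G$ is a bicycle with initial cycle $\pi_1$, terminal cycle $\pi_2$ and connecting path $\tau$, every path in $G$ decomposes canonically as a walk in $\pi_1$, the optional full traversal of $\tau$, and a walk in $\pi_2$. Walks in a simple cycle that share a fixed terminal (resp.\ initial) vertex are totally ordered by the suffix (resp.\ prefix) relation, so a sufficiently long $\sigma$ that wraps around $\pi_1$, crosses $\tau$, and wraps around $\pi_2$ will contain both $\alpha$ and $\beta$ as subpaths. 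The degenerate cases in which one or both of $\pi_1, \pi_2$ is absent are handled by the same construction.

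For the converse, assume $\Path(G)$ is atomic but $G$ is not strongly connected; I aim to derive the bicycle structure in four steps. \emph{Step 1}: applying JEP to pairs of empty paths $(u),(v)$ yields $u \to^\ast v$ or $v \to^\ast u$, so the strongly connected components of $G$ form a linear chain $C_1 < C_2 < \dots < C_m$. \emph{Step 2}: each trivial (loop-free singleton) component $\{v\}$ has in-degree and out-degree at most $1$ in $G$, for otherwise two paths branching or merging at $v$ cannot be jointly embedded, since any common upper bound visits $v$ at most once. An analogous forking argument at any entry or exit vertex of a non-trivial component pins down unique entry and exit vertices, together with unique edges connecting them to the neighbouring components. \emph{Step 3}: each non-trivial component $C$ is a simple cycle. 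Otherwise, an in-degree count produces a vertex of $C$ with in-degree at least $2$ within $C$; tracing backward from the unique exit vertex $v$ of $C$ (or dually from the unique entry vertex if $C$ is terminal) and exploiting strong connectivity of $C$, one constructs two distinct walks in $C$ of the same length ending at $v$. Extending each by the unique exit edge produces two paths whose occurrences in a common upper bound must coincide as suffixes, a contradiction. \emph{Step 4}: at most two non-trivial components exist; given three, say $\pi_A < \pi_B < \pi_C$, the uniqueness of the entry and exit of $\pi_B$ forces the $\pi_B$-segment of any $\pi_A$-to-$\pi_C$ path to coincide with that of any common upper bound, yet varying the number of loops taken around $\pi_B$ produces two such paths with $\pi_B$-segments of different lengths, contradicting JEP. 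Combining the four steps, $G$ fits the bicycle template.

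I expect the main obstacle to lie in Step 3: when the non-simple structure of $C$ sits ``far from'' the exit vertex, propagating the branching back to $v$ in order to produce two walks ending at $v$ of the same length requires a careful traversal argument inside $C$, leaning on strong connectivity of $C$ and on the rigidity of the exit edge granted by Step 2. The other steps reduce to clean applications of JEP to judiciously chosen small paths.
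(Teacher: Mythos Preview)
Your plan is correct, and the $(\Leftarrow)$ direction matches the paper exactly. For $(\Rightarrow)$, however, the paper takes a much shorter route than your Steps~2--4, and in particular completely sidesteps the difficulty you anticipate in Step~3.

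After your Step~1 (linear ordering of the components $C_1,\dots,C_m$, which the paper also proves first), the paper establishes just two global claims: \emph{(a) any vertex of in-degree $>1$ lies in $C_m$}, and \emph{(b) any vertex of out-degree $>1$ lies in $C_1$}. The proof of (a) is a single JEP application: if $u\in C_k$ with $k<m$ has two in-edges $v\to u$, $w\to u$, pick any path $\pi$ from $u$ into a later component; then $(v\to u)\pi$ and $(w\to u)\pi$ have the same length and both end with an edge that leaves $C_k$ and is therefore traversed at most once by any path, so they cannot sit inside a common extension. From (a) and (b) together one reads off immediately that every vertex of an intermediate component has in- and out-degree exactly $1$ in $G$ (forcing these components to be trivial), that every vertex of $C_1$ has in-degree $\le 1$ (forcing $C_1$ to be trivial or a simple cycle), and dually for $C_m$. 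Your Steps~2, 3 and 4 are all special cases of this single degree argument: the ``forking at $w$ with in-degree $\ge 2$ inside $C$, then routing to the exit edge'' that you propose in Step~3 is exactly the construction in (a), just restricted to the case where the high in-degree occurs within a non-trivial component. So your traversal concern---propagating the branching from $w$ back to the exit vertex $v$---is handled by nothing more than one path $w\to^\ast v$ inside $C$ followed by the exit edge; no delicate ``same length'' bookkeeping beyond that is needed.

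In short: your approach works, but the paper's single pair of degree claims replaces your three structural steps and removes what you flagged as the main obstacle.
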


As an immediate consequence we have:

\begin{cor}
\label{cor:graphs-ato}
It is algorithmically decidable, given a finite digraph $G$, whether or not the poset $\Path(G)$ of all directed paths in $G$ under the subpath ordering is
 atomic. \qed
\end{cor}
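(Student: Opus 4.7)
The plan is to invoke Theorem \ref{thm:graphs-ato}, which reduces the question to testing whether the finite digraph $G$ satisfies either (i) strong connectivity, or (ii) the bicycle structural condition; atomicity of $\Path(G)$ is equivalent to at least one of these holding, so decidability of atomicity follows once I exhibit decision procedures for each of (i) and (ii).

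Condition (i) is classically decidable. Strong connectivity of a finite digraph can be tested in linear time by Tarjan's algorithm, or more elementarily by computing the reachability relation $\rightarrow^\ast$ via two graph searches—once in $G$ and once in $G$ with all edges reversed—and then checking that the resulting equivalence relation $\leftrightarrow^\ast$ has a single class on $V(G)$.

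Condition (ii) is equally straightforward because a bicycle has a very rigid local shape. In the generic case (both cycles present, non-trivial connecting path) every vertex of $G$ has in-degree and out-degree in $\{1,2\}$; precisely one vertex has out-degree $2$ (the point where $\tau$ leaves $\pi_1$), precisely one has in-degree $2$ (where $\tau$ meets $\pi_2$), and every other vertex has both degrees equal to $1$. To test (ii), I would first examine the in- and out-degree sequence of $G$ to see which of the admissible bicycle shapes it could match; the degenerate cases (a single simple cycle, a simple directed path, a cycle with a tail attached on the input or output side) are distinguished by the presence of vertices of in- or out-degree $0$, or by the total absence of degree-$2$ vertices. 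Having identified the putative shape, one starts from the unique candidate junction (or from a vertex of in-degree $0$, as appropriate) and follows outgoing edges deterministically to reconstruct the cycles $\pi_1$, $\pi_2$ and the connecting path $\tau$, finally verifying that the reconstruction uses every vertex and every edge of $G$ and that the simplicity and vertex-disjointness requirements are met. Each such check is polynomial in $|V(G)|+|E(G)|$.

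There is no genuine obstacle here: all of the substantive work has already been done in the structural Theorem \ref{thm:graphs-ato}, and the algorithmic verification of its two conditions is routine. The only mildly delicate point is the bookkeeping across the small collection of degenerate bicycle shapes, but each is handled by a direct inspection of degrees followed by a single deterministic traversal of $G$.
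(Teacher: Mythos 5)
Your proposal is correct and takes essentially the same route as the paper: the corollary is stated there as an immediate consequence of Theorem \ref{thm:graphs-ato}, since both strong connectivity and the bicycle property are manifestly checkable for a finite digraph. You merely spell out the routine algorithmic details (reachability for (i), degree inspection and deterministic traversal for (ii)), which the paper leaves implicit.
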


\begin{proof}[Proof of Theorem \ref{thm:graphs-ato}]
($\Leftarrow$)
First suppose that $G$ is strongly connected. Let $\pi$ and $\tau$ be two paths in $G$.
Let $u$ be the end vertex of $\pi$, let $v$ be the start vertex of $\tau$, and let $\xi$ be any path from
$u$ to $v$. Then the concatenation $\pi\xi\tau$ is a path containing both $\pi$ and $\tau$.

Now suppose that $G$ is a bicycle, with the initial cycle,  the connecting path and the final cycle
\begin{align*}
\pi_1 &= (v_1\rightarrow v_2\rightarrow \dots\rightarrow v_k\rightarrow v_1),
\\
\tau &= (v_1=u_1\rightarrow u_2\rightarrow \dots\rightarrow u_p= w_1),
\\
\pi_2 &= (w_1\rightarrow w_2\rightarrow \dots\rightarrow w_l\rightarrow w_1),
\end{align*}
respectively.
Atomicity follows from the fact that every path in $G$ is a subpath
of $\pi_1^m\tau\pi_2^n$ for sufficiently large $m,n$.
This argument remains valid if
$G$ is degenerate, i.e. if some of $\pi_1$, $\tau$ or $\pi_2$ are trivial.

($\Rightarrow$)
Suppose that $\Path(G)$ is atomic. Let $C_1,\dots,C_n$ be the strongly connected components of $G$. If $n=1$ then $G$ is strongly connected, and the proof is finished. So assume $n>1$.

\begin{claim}
\label{cl:gratcon}
For any two vertices there is a path from one of them to the other.
\end{claim}

\begin{proof}
Let $u,v$ be arbitrary distinct vertices, and let $\pi$ be a path that contains the trivial paths $(u)$ and $(v)$ as subpaths. If $\pi$ visits vertex $u$ before visiting vertex $v$ then it contains a subpath from $u$ to $v$, and otherwise it contains a subpath from $v$ to $u$.
\end{proof}

As a consequence of Claim \ref{cl:gratcon}, the components $C_1,\dots, C_n$ are linearly ordered, say $C_1\rightarrow^\ast C_2\rightarrow^\ast \dots\rightarrow^\ast C_n$.

\begin{claim}
\label{cl:gratind}
If $u$ is a vertex of in-degree $>1$ then $u\in C_n$.
\end{claim}

\begin{proof}
Let $v\rightarrow u$ and $w\rightarrow u$ be two distinct edges ending in $u$.
Suppose that $u\in C_k$ with $k<n$, and let $\pi$ be a path from $u$ to a vertex in $C_{k+1}$.
Then the paths $(v\rightarrow u)\pi$ and $(w\rightarrow u)\pi$ are not subpaths of any single path in $G$. 
\end{proof}

\begin{claim}
\label{cl:gratoutd}
If $u$ is a vertex of out-degree $>1$ then $u\in C_1$.
\end{claim}

\begin{proof}
This is dual to Claim \ref{cl:gratind}.
\end{proof}

From Claims \ref{cl:gratind}, \ref{cl:gratoutd} it immediately follows that the following hold:
\begin{txtitemize}
\item
Every vertex of $C_1$ has in-degree $0$ or $1$, and hence $C_1$ is trivial or a simple cycle.
\item
Every vertex of $C_n$ has out-degree $0$ or $1$, and hence $C_n$ is trivial or a simple cycle.
\item
Every vertex in $C_k$, where $k\in (n)$, has both the in-degree and the out-degree equal to $1$.
\end{txtitemize}
From this it follows that $G$ is a bicycle, if $n>2$. For the case $n=2$, notice that there is precisely one edge connecting $C_1$ and $C_2$, because two such edges cannot have a common path that contains them. Therefore $G$ is a bicycle when $n=2$ as well, and the proof is complete.
\end{proof}

\section{Subpath ordering on directed graphs: wqo}
\label{sec:graphs-wqo}

We now turn to the question of deciding the wqo property for the path poset $\Path(G)$ of a digraph $G$, and give two equivalent properties, one of which will be obviously algorithmically decidable.

We continue using the terminology and notation introduced in the previous section.
Additionally,
an \emph{in-out cycle} in a digraph $G$ is a cycle that contains a vertex of in-degree $>1$ and a vertex of out-degree $>1$.
A collection of subgraphs $H_1,\dots, H_n$ of a digraph $G$ will be called a \emph{path-complete decomposition} of $G$ if 
\[
\quad \Path(G)=\bigcup_{i=1}^n\Path(H_i),
\]
i.e. if every path in $G$ is wholly contained within one of the $H_i$.
Note that the $H_i$ must collectively contain all the vertices of $G$, but that they will not be disjoint in general.

\begin{thm}
\label{thm:grwqo}
The following are equivalent for a finite digraph $G$:
\begin{thmenumerate}
\item
\label{it:grwi}
$G$ has no in-out cycles.
\item
\label{it:grwii}
$G$ has a path-complete decomposition into bicycles.
\item
\label{it:grwiii}
The set $\Path(G)$ of all paths in $G$ is well quasi-ordered under the subpath ordering.
\end{thmenumerate}
\end{thm}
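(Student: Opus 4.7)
The plan is to prove the equivalence via the cycle (i)$\Rightarrow$(ii)$\Rightarrow$(iii)$\Rightarrow$(i). The structural heart of the argument is (i)$\Rightarrow$(ii); the other two implications follow more directly, from a parametric description of paths in a bicycle and from an explicit antichain construction.

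For (i)$\Rightarrow$(ii), I would analyse the strongly connected components of $G$. If a non-trivial SCC $S$ contained both a vertex $u$ of in-degree $>1$ in $G$ and a vertex $v$ of out-degree $>1$ in $G$, concatenating internal paths $u\to v$ and $v\to u$ within $S$ (or taking any simple cycle through $u$ in $S$, if $u=v$) would produce an in-out cycle in $G$. Hence every non-trivial SCC has either all vertex in-degrees in $G$ equal to $1$ (call it \emph{in-type}) or all out-degrees equal to $1$ (\emph{out-type}); combined with strong connectivity, this forces each non-trivial SCC to be a simple cycle that, respectively, receives no external in-edges or sends no external out-edges. Now any path in $G$ visits a linear sequence of SCCs; a strictly intermediate SCC admits both external in- and out-edges and so must be trivial (a single vertex without self-loop); and if the path visits more than one SCC, the first SCC must be trivial or in-type, the last trivial or out-type. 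Consequently each path $\pi$ is contained in a bicycle $B_\pi$ whose initial (resp.\ terminal) cycle is the first (resp.\ last) SCC visited, degenerate if trivial, and whose connecting path is the simple path through the intermediate trivial SCCs. Since $G$ is finite, $\{B_\pi:\pi\in\Path(G)\}$ consists of only finitely many distinct subgraphs, giving a path-complete decomposition into bicycles.

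For (ii)$\Rightarrow$(iii), a finite union of wqo posets is wqo, so it suffices to show that $\Path(H)$ is wqo for a single bicycle $H$. In $H$ only the vertex $v_1$ has out-degree $>1$, so any path is deterministic apart from its choices made at visits to $v_1$. This classifies paths into finitely many \emph{shapes}, indexed by starting vertex, ending vertex, and the structural pattern of $\pi_1$-, $\tau$-, and $\pi_2$-traversals, within each of which a path is parameterised by the pair $(m,n)\in\mathbb{N}_0^2$ counting full loops around $\pi_1$ and $\pi_2$ respectively. Whenever $m\leq m'$ and $n\leq n'$ one has $p(m,n)\leq_p p(m',n')$: align $p(m,n)$ inside $p(m',n')$ by dropping the first $m'-m$ $\pi_1$-loops and the last $n'-n$ $\pi_2$-loops. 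Dickson's lemma then gives wqo within each shape, and the finite union yields wqo of $\Path(H)$.

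For (iii)$\Rightarrow$(i), I would argue contrapositively. Suppose $G$ contains an in-out cycle with vertices $u$ of in-degree $>1$ and $v$ of out-degree $>1$; in particular $u$ and $v$ lie in a common SCC, so we may pick a simple path $\alpha$ from $u$ to $v$ in $G$ and a simple path from $v$ to $u$ (or, if $u=v$, a simple cycle through $u$), which I denote $\beta$. Let $e$ be the predecessor of $u$ along $\beta$ and $b$ the successor of $v$ along $\beta$; by the degree hypotheses there exist an in-neighbour $a\neq e$ of $u$ and an out-neighbour $c\neq b$ of $v$. Set $p_n:=(a\to u)\,\alpha\,(\beta\alpha)^n\,(v\to c)$. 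Simplicity of $\alpha$ and $\beta$ ensures that within $\alpha\cup\beta$ the only edge into $u$ is $e\to u$ and the only edge out of $v$ is $v\to b$, so the marker edges $a\to u$ and $v\to c$ occur in $p_n$ exactly once, at its very start and very end. Consequently $p_n\leq_p p_m$ forces $n=m$, and $\{p_n\}_{n\geq 0}$ is an infinite antichain. The main obstacle I anticipate is the case analysis in (i)$\Rightarrow$(ii), in particular making sure that the degenerate cases of the bicycle definition cover all path configurations (single-SCC paths, trivial initial or terminal SCCs, and absent intermediate SCCs) and that the exit/entry vertices of the initial/terminal cycles in the bicycle are chosen consistently with the path $\pi$.
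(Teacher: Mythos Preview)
Your proposal is correct and follows essentially the same three-implication cycle as the paper, with the same structural analysis of strongly connected components for (i)$\Rightarrow$(ii) and the same marker-edge antichain for (iii)$\Rightarrow$(i). The one noteworthy difference is in (ii)$\Rightarrow$(iii): the paper decomposes a bicycle path into seven pieces and appeals to Higman's theorem, whereas you parameterise paths of each shape by $(m,n)\in\mathbb{N}_0^2$ and invoke Dickson's lemma directly---your route is slightly more elementary and arguably cleaner, since Higman is overkill here, but the underlying idea (finitely many shapes, each controlled by at most two loop-counts) is the same.
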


\begin{proof}
\ref{it:grwi}$\Rightarrow$\ref{it:grwii}
Suppose that $G$ has no in-out cycles. Then every strongly connected component is either trivial or a cycle.
Recall that the set of all strongly connected components of $G$ is partially ordered via the relation induced by 
$\rightarrow^\ast$. 
A component that is neither maximal nor minimal under this ordering certainly has an in-edge coming into it from another component, and an out-edge going into another component. Therefore such a component must be trivial.
It follows that every subgraph of $G$ consisting of a maximal component, a minimal component, and a single path connecting them, is a bicycle.
Taking all the subgraphs of this form, as well as all the components which are simultaneously maximal and minimal, yields a path-complete decomposition into bicycles.

\ref{it:grwii}$\Rightarrow$\ref{it:grwiii}
Suppose $G$ has a path-complete decomposition into bicycles.
Since $\Path(G)$ is a finite union of sets of the form $\Path(\mathcal{B})$, where $\mathcal{B}$ is a bicycle, 
it is sufficient to show that this latter set  is wqo, which amounts to showing that it has no infinite antichains.

So, consider a bicycle $\mathcal{B}$ consisting of the initial cycle,  the connecting path and the terminal cycle
\begin{align*}
\pi_1 &= (v_1\rightarrow v_2\rightarrow \dots\rightarrow v_k\rightarrow v_1),
\\
\tau &= (v_1=u_1\rightarrow u_2\rightarrow \dots\rightarrow u_p= w_1),
\\
\pi_2 &= (w_1\rightarrow w_2\rightarrow \dots\rightarrow w_l\rightarrow w_1),
\end{align*}
respectively.
A path in $\mathcal{B}$ is a concatenation of some of the following:
a proper subpath of $\pi_1$;
a concatenation $\pi_1^i$ for some $i\in\mathbb{N}$;
a proper subpath of $\pi_1$;
a subpath of $\tau$;
a proper subpath of $\pi_2$;
a concatenation $\pi_2^j$ for some $j\in\mathbb{N}$;
a proper subpath of $\pi_2$.
The set of all paths of any one of these types is clearly wqo.
A straightforward application of Higman's wqo result \cite{higman52} implies that $\Path(\mathcal{B})$ is wqo.

\ref{it:grwiii}$\Rightarrow$\ref{it:grwi}
We prove the contrapositive: if $G$ has an in-out cycle 
$\pi=u_1\rightarrow\dots\rightarrow u_k\rightarrow u_1$ then $\Path(G)$ is not wqo.
Let $u_i$ be a vertex of in-degree $>1$ with $v\rightarrow u_i$, $v\neq u_{i-1}$, where
$u_0$ is interpreted as $u_k$.
Analogously, let $u_j$ be a vertex of out-degree $>1$ with $u_j\rightarrow w$, $w\neq u_{j+1}$.
The set of all concatenations of the form
\[
(v\rightarrow u_i\rightarrow u_{i+1}\rightarrow \dots\rightarrow u_1) \pi^k
(u_1\rightarrow \dots\rightarrow u_j\rightarrow w)
\]
for $k=1,2,\dots$, is an antichain in $\Path(G)$.
\end{proof}

\section{Contiguous subword ordering and de Bruijn graphs}
\label{sec:Bruijn}

Let $A$ be a finite alphabet. Denote by $A^\ast$ the set of all words over $A$.
A word $u$ is a \emph{contiguous subword} (or a \emph{factor}) of a word $v$ if $v$ is a juxtaposition $v=yuz$ for some
$y,z\in A^\ast$. This defines a partial ordering on $A^\ast$ denoted by $\leq_f$.

For a set $W\subseteq A^\ast$ and $k\in \mathbb{N}_0$, we 
define
$W_k:=\{ w\in W\::\: |w|=k\}$, the set of all words in $W$ of length $k$.
For a set $K\subseteq \mathbb{N}_0$ let
$W_K:=\bigcup_{k\in K} W_k$.

For $m\in\mathbb{N}$, the \emph{$m$-dimensional de Bruijn graph over $A$}, denoted $\Bruijn_{A,m}$
is the digraph with vertices $A^\ast_m$ and edges $au\rightarrow ua^\prime$ with
$a,a^\prime\in A$, $u\in A^\ast_{m-1}$.
In other words, two length $m$ words are connected by a directed edge if the length $m-1$ suffix of the first word equals the length $m-1$ prefix of the second.
For more details on these graphs and historical references see \cite[Chapter 8]{vanlint92}.

There is an obvious correspondence between $A^\ast_{[m,\infty)}$ and paths in $\Bruijn_{A,m}$:
\begin{txtitemize}
\item
For a word $w=a_1\dots a_l$ with $l\geq m$, the corresponding path is:
\[
\pofw(w):= (a_1\dots a_m)\rightarrow (a_2\dots a_{m+1})\rightarrow\dots\rightarrow
(a_{l-m}\dots a_{l-1})\rightarrow (a_{l-m+1}\dots a_l).
\]
\item
For a path
\[
\pi = (a_1u_1\rightarrow u_1a_1^\prime=a_2u_2\rightarrow u_2a_2^\prime=a_3u_3\rightarrow\dots
\rightarrow u_{l-1}a_{l-1}^\prime=a_lu_l)
\]
the corresponding word is 
\[
\wofp(\pi):=a_1a_2\dots a_l u_l(=a_1u_1a_1^\prime\dots a_{l-1}^\prime).
\]
\end{txtitemize}
For us, the important properties of this correspondence are:
\begin{enumerate}[label=\textsf{(W$\Pi$\arabic*)}, widest=(W$\Pi$2), leftmargin=15mm]
\item
\label{it:wp1}
$\pofw$ and $\wofp$ are mutually inverse, i.e.
\[
\wofp(\pofw(w))=w \quad \text{and} \quad \pofw(\wofp(\pi))=\pi
\]
for all $w\in A^\ast_{[m,\infty)}$ and all $\pi\in\Path(\Bruijn_{A,m})$.
\item
\label{it:wp2}
$\pofw$ and $\wofp$ respect the factor ordering on $A^\ast_{[m,\infty)}$ and the subpath ordering on $\Path(\Bruijn_{A,m})$, in the sense that
\[
w_1\leq_f w_2\Rightarrow \pofw(w_1)\leq_p\pofw(w_2) \quad \text{and}\quad
\pi_1\leq_p \pi_2\Rightarrow \wofp(\pi_1)\leq_f\wofp(\pi_2).
\]
\end{enumerate}

Now consider a finitely based downward closed set $C\subseteq A^\ast$ under the factor ordering.
Let $B$ be the basis of $C$, and let $\bl:=\max\{|v|\::\: v\in B\}$, the maximum length of a basis element.
The \emph{factor graph} $\Gamma(C)$ is the induced subgraph of $\Bruijn_{A,\bl}$ on the vertex set
$C_\bl$.

\begin{examples}
\label{graph_example}
We conclude this section with three examples of downward closed sets over the alphabet $A=\{a,b\}$ and their factor graphs. In all the examples we have $C:=\Av(B)$ and $\bl:=\max\{|v|\::\: v\in B\}=3$.
The graphs are shown in Figure \ref{fig:fgs}. The bases and vertex sets are as follows:
\begin{thmenumerate}
\item
\label{it:fg1}
$B = \{bb, aab\}$,  $C_3 = \{bab, aba, baa, aaa\}$;
\item
\label{it:fg2}
$B = \{aaa, baa, bba, bbb\}$,  $C_3 = \{aab, abb,$ $ aba, bab\}$;
\item
\label{it:fg3}
$B = \{aa, aba, abb, bab\}$,  $C_3 = \{bbb, bba\}$.
\end{thmenumerate}
\end{examples}

\begin{figure}
\begin{center}
\begin{tabular}{cc}
$C$ & $\Gamma(C)$ \\ \hline\hline
\raisebox{5mm}{$\Av(bb, aab)$}
&
\begin{tikzpicture}[> =  {Stealth [scale=1.3]}, thick]
\tikzstyle{everystate} = [thick]
\node [state, shape = ellipse, minimum size = 20pt] (bab) at (0,0) {$bab$};
\node [state, shape = ellipse, minimum size = 20pt] (aba) at (2.5, 0) {$aba$};
\node [state, shape = ellipse, minimum size = 20pt]  (baa) at (5,0) {$baa$};
\node [state, shape = ellipse, minimum size = 20pt] (aaa) at (7.5, 0) {$aaa$};

\path[->]
(bab) edge [bend left] node {} (aba)
(aba) edge [bend left] node {} (bab)
(aba) edge node {} (baa)
(baa) edge node {} (aaa)
(aaa) edge [loop right] node {} (aaa)
;
\end{tikzpicture}
\\
&\\
\raisebox{18mm}{$\Av(aaa, baa, bba, bbb)$}
&
\begin{tikzpicture}[> =  {Stealth [scale=1.3]}, thick]
\tikzstyle{every state}=[thick]
\node [state, shape = ellipse, minimum size = 15pt] (0) at (0, 0) {$aab$};
\node [state, shape = ellipse, minimum size = 15pt]  (1) at (2, 0) {$aba$};
\node [state, shape = ellipse, minimum size = 15pt]  (2) at (4.5, 0) {$bab$};
\node [state, shape = ellipse, minimum size = 15pt]  (3) at (6.5, 0) {$abb$};

\path[->]
(0) edge node {} (1)
(1) edge [bend left] node {} (2)
(2) edge [bend left] node {} (1)
(2) edge node {} (3)
(0) edge [bend right = 45] node {} (3)
;
\end{tikzpicture}
\\&\\
\raisebox{2mm}{$\Av(aa, aba, abb, bab)$}
&
\begin{tikzpicture}[> =  {Stealth [scale=1.3]}, thick]
\tikzstyle{every state} = [thick]
\node [state, shape = ellipse, minimum size = 15pt]   (1) at (0,0) {$bbb$};
\node [state, shape = ellipse, minimum size = 15pt] (2) at (2.5, 0) {$bba$};
\path[->]
(1) edge [loop left, looseness = 10] node {} (1)
(1) edge node {} (2)
;
\end{tikzpicture}
\\ \hline\hline
\end{tabular}
\end{center}

\caption{Three downward closed sets under the factor ordering and their factor graphs.}
\label{fig:fgs}
\end{figure}
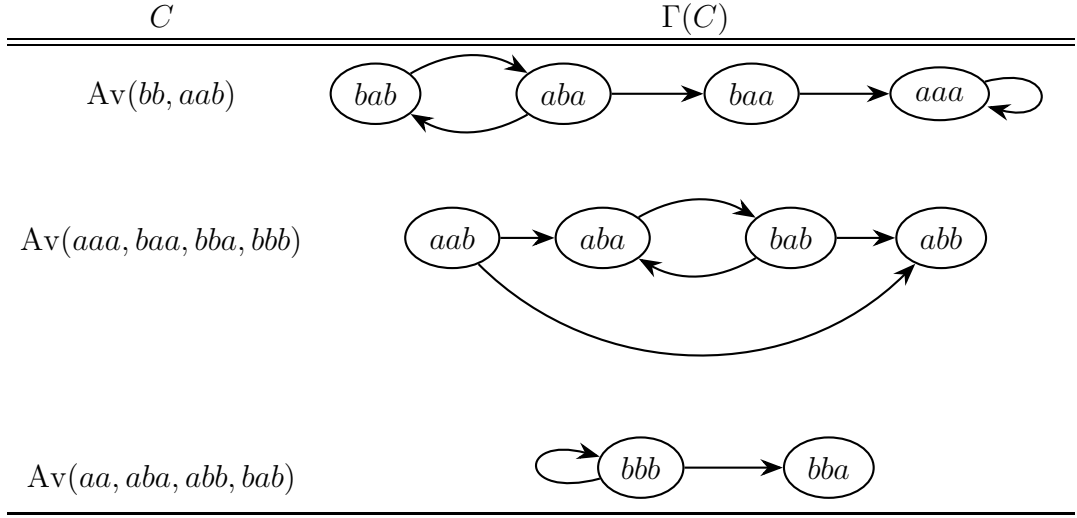

\section{Contiguous subword ordering: atomicity and wqo}
\label{sec:words-ato-wqo}

In Sections \ref{sec:graphs-ato}, \ref{sec:graphs-wqo} we have seen that the atomicity and wqo are decidable 
for path posets of finite digraphs.
In the previous section we have established a link between downward closed sets of words under the factor ordering and their factor graphs. We are now going to combine all these results and show that atomicity and wqo are decidable for factor ordering on words.

We begin with atomicity:

\begin{thm}
\label{thm:f-ato}
Let $A$ be a finite alphabet, let $C\subseteq A^\ast$ be an infinite, finitely based downward closed set under the factor ordering, and let $\bl$ be the maximum length of a basis element of $C$.
Then $C$ is atomic if and only if the following two conditions are satisfied:
\begin{thmenumerate}
\item
\label{it:fat1}
the factor graph $\Gamma(C)$ is strongly connected or is a bicycle; and
\item
\label{it:fat2}
for each word $u\in C$ with $|u|<\bl$ there is a word $v\in C$ with $|v|=\bl$ which contains $u$ as a factor.
\end{thmenumerate}
\end{thm}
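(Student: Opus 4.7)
My plan is to use the joint embedding property (JEP) formulation of atomicity together with the $\pofw/\wofp$ correspondence from Section \ref{sec:Bruijn}, reducing the statement to Theorem \ref{thm:graphs-ato} applied to $\Gamma(C)$. The pivotal preliminary observation, which I would record at the outset, is that a word $w\in A^\ast$ with $|w|\geq\bl$ lies in $C$ if and only if every length-$\bl$ factor of $w$ lies in $C_\bl$; this is because every basis element of $C$ has length at most $\bl$, so any basis element occurring in $w$ must occur inside some length-$\bl$ factor of $w$. An immediate consequence is that $\pofw$ and $\wofp$ restrict to a bijection between $C_{[\bl,\infty)}$ and $\Path(\Gamma(C))$ which respects the factor and subpath orderings by \ref{it:wp2}.

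For sufficiency, suppose \ref{it:fat1} and \ref{it:fat2} hold, and let $x,y\in C$. If $|x|<\bl$, use \ref{it:fat2} to enlarge $x$ to some $x'\in C_\bl$ with $x\leq_f x'$; otherwise set $x':=x$. Treat $y$ the same way. Then $\pofw(x')$ and $\pofw(y')$ are paths in $\Gamma(C)$, and since $\Gamma(C)$ is strongly connected or a bicycle, Theorem \ref{thm:graphs-ato} supplies a path $\pi\in\Path(\Gamma(C))$ containing both as subpaths. Setting $z:=\wofp(\pi)$, the preliminary observation gives $z\in C$, while \ref{it:wp2} gives $x\leq_f x'\leq_f z$ and $y\leq_f y'\leq_f z$. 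Hence $C$ has the JEP and so is atomic.

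For necessity, assume $C$ is atomic. Condition \ref{it:fat2} is quickly dispatched: since $|A|<\infty$ and $C$ is infinite and downward closed under $\leq_f$, the set $C_\bl$ is non-empty (an infinite downward closed set over a finite alphabet meets $A^\ast_k$ for every $k\in\mathbb{N}_0$); picking any $v_0\in C_\bl$ and applying JEP to $u\in C$ with $|u|<\bl$ yields a common extension $z\in C$ with $|z|\geq\bl$, inside which some length-$\bl$ factor still contains $u$ (since $|u|<\bl\leq|z|$), and that factor lies in $C_\bl$ by downward closure. For \ref{it:fat1} I argue the contrapositive: if $\Gamma(C)$ is neither strongly connected nor a bicycle, then by Theorem \ref{thm:graphs-ato} there exist $\pi_1,\pi_2\in\Path(\Gamma(C))$ with no common upper bound in $\Path(\Gamma(C))$; the words $\wofp(\pi_1),\wofp(\pi_2)$ lie in $C$, so JEP would produce $z\in C$ containing both as factors. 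Then $|z|\geq\bl$, the preliminary observation makes $\pofw(z)$ a path in $\Gamma(C)$, and \ref{it:wp2} shows it bounds both $\pi_1$ and $\pi_2$, contradicting the choice of these paths.

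The principal difficulty, such as it is, is the preliminary observation linking $C$-membership for long words to $C_\bl$-membership of their length-$\bl$ factors; once this is in place, the proof becomes essentially a dictionary translation between the word side and the path side, with Theorem \ref{thm:graphs-ato} doing the combinatorial heavy lifting. A minor wrinkle, worth a sentence of commentary, is the role of \ref{it:fat2} in the sufficiency proof: without it, short words $x$ with no length-$\bl$ extension in $C$ would simply not be reachable through the path correspondence, so \ref{it:fat2} is exactly what guarantees that atomicity of $\Path(\Gamma(C))$ transfers to the whole of $C$ and not merely to $C_{[\bl,\infty)}$.
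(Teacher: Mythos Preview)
Your proof is correct and follows essentially the same route as the paper's: both directions reduce to Theorem~\ref{thm:graphs-ato} via the $\pofw/\wofp$ correspondence, with condition~\ref{it:fat2} handling the short words. Your explicit ``preliminary observation'' (that $w\in C_{[\bl,\infty)}$ iff all its length-$\bl$ factors lie in $C_\bl$) is exactly what the paper uses implicitly through \ref{it:wp1}--\ref{it:wp2}, and your direct argument for~\ref{it:fat2} is the contrapositive of the paper's.
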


\begin{proof}
($\Leftarrow$)
Suppose that \ref{it:fat1} and \ref{it:fat2} hold.
Let $u,v\in C$ be arbitrary.
Using \ref{it:fat2} if necessary, there exist $u^\prime,v^\prime\in C_{[\bl,\infty)}$ such that
$u\leq_f u^\prime$ and $v\leq_f v^\prime$.
From \ref{it:fat1} and Theorem \ref{thm:graphs-ato} we have that $\Path(\Gamma(C))$ is atomic;
hence $\pofw(u^\prime)$ and $\pofw(v^\prime)$ are subpaths of some path $\pi\in\Path(\Gamma(C))$.
Using \ref{it:wp1}, \ref{it:wp2} it follows that $u^\prime,v^\prime$ are factors of $\wofp(\pi)\in C$, and hence so are
$u,v$.

($\Rightarrow$)
We prove the contrapositive.
If \ref{it:fat1} is not satisfied then $\Path(\Gamma(C))$ is not atomic by Theorem \ref{thm:graphs-ato}.
Hence there exist two paths $\pi$ and $\tau$ which are not subpaths of a single path in $\Gamma(C)$.
But then the words $\wofp(\pi)$ and $\wofp(\tau)$ are not factors of a single word from $C$ because of properties
\ref{it:wp1}, \ref{it:wp2}, and therefore $C$ is not atomic.

Suppose now that \ref{it:fat2} does not hold, so there is $u\in C_{[\bl)}$ such that
$u\nleq_f v$ for all $v\in C_\bl$.
Since $C$ is infinite we must have $C_\bl\neq\emptyset$; let $w\in C_\bl$ be arbitrary.
Then $u$ and $w$ cannot be factors of some $z\in C$. Indeed, if they were, then $|z|\geq \bl$, and hence $z$ would contain a factor $v$ of length $\bl$, which in turn would contain $u$, a contradiction.
Hence $C$ is not atomic in this case either.
\end{proof}

The first part of  our first main theorem is an immediate corollary:

\begin{proof}[Proof of Theorem \ref{thm:words} \ref{it:words-ato}]
We need to verify that the conditions given in Theorem \ref{thm:f-ato} are algorithmically testable.
The factor graph $\Gamma(C)$ of $C=\Av(B)$ is computable from $B$.
Direct inspection of $\Gamma(C)$ establishes whether it is acyclic, which is the case precisely when $C$ is finite.
In this case, we compute all the elements of $C$ and directly check whether it satisfies the join property.
If $C$ is infinite, again by direct inspection
we determine
whether $\Gamma(C)$ is strongly connected or a bicycle, thus checking condition \ref{it:fat1}.
Finally \ref{it:fat2} can be checked by direct computation and inspection of all the elements of $C_{[\bl)}$.
\end{proof}

The link between the wqo property under the factor  and subpath orderings is even more direct.

\begin{thm}
\label{thm:f-wqo}
Let $A$ be a finite alphabet, and let $C\subseteq A^\ast$ be a finitely based downward closed set under the factor ordering.
Then $C$ is well quasi-ordered if and only if the factor graph $\Gamma(C)$ has a path-complete decomposition into bicycles.
\end{thm}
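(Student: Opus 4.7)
The plan is to reduce the statement to Theorem \ref{thm:grwqo} via the correspondence of Section \ref{sec:Bruijn}. More precisely, I will argue that the posets $C_{[\bl,\infty)}$ (under the factor ordering) and $\Path(\Gamma(C))$ (under the subpath ordering) are order-isomorphic via the maps $\pofw$ and $\wofp$, after which everything will follow from properties \ref{it:wp1}, \ref{it:wp2} and the trichotomy in Theorem \ref{thm:grwqo}.

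The first step is to verify that $\wofp$ and $\pofw$ actually restrict to mutually inverse bijections between $\Path(\Gamma(C))$ and $C_{[\bl,\infty)}$. One direction is easy: if $w\in C_{[\bl,\infty)}$ then every vertex on $\pofw(w)$ is a length-$\bl$ factor of $w$, hence lies in $C_\bl$ (as $C$ is downward closed), so $\pofw(w)\in\Path(\Gamma(C))$. The other direction is the slightly subtle point: for a path $\pi\in\Path(\Gamma(C))$, one needs to show $\wofp(\pi)\in C$. This uses the bound $\bl=\max\{|v|:v\in B\}$: any basis element $b$ that appears as a factor of $\wofp(\pi)$ has $|b|\leq \bl$ and therefore must be a factor of some length-$\bl$ factor of $\wofp(\pi)$; but the length-$\bl$ factors of $\wofp(\pi)$ are precisely the vertices appearing on $\pi$, all of which lie in $C_\bl\subseteq C$, which would contradict $b\in B$. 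Combined with property \ref{it:wp1}, this gives the bijection, and property \ref{it:wp2} upgrades it to an order isomorphism.

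The second step is the reduction proper. Since $\bl$ is fixed, the set $C_{[\bl)}$ is finite, and adjoining finitely many elements to a poset does not affect well quasi-order; hence $C$ is wqo if and only if $C_{[\bl,\infty)}$ is. By the order isomorphism above, this is equivalent to $\Path(\Gamma(C))$ being wqo, which by Theorem \ref{thm:grwqo} is equivalent to $\Gamma(C)$ admitting a path-complete decomposition into bicycles. This is exactly the stated equivalence.

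I expect the main (only) obstacle to be the verification in the first step that $\wofp$ maps $\Path(\Gamma(C))$ into $C$, which is where the specific choice of $\bl$ as the maximal basis length is essential; the remainder is essentially bookkeeping on top of Theorem \ref{thm:grwqo} and the already-established properties \ref{it:wp1} and \ref{it:wp2}.
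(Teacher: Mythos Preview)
Your proposal is correct and follows essentially the same approach as the paper: establish that $\pofw$ and $\wofp$ give an order isomorphism between $C_{[\bl,\infty)}$ and $\Path(\Gamma(C))$, invoke Theorem~\ref{thm:grwqo}, and observe that $C_{[\bl)}$ is finite. In fact you spell out more carefully than the paper does why $\wofp$ lands in $C$ (the paper simply asserts the bijection as a consequence of \ref{it:wp1}, \ref{it:wp2}), so your argument is if anything slightly more complete.
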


\begin{proof}
Let $C=\Av(B)$, where $B$ is finite, and let $\bl:=\max\{ |v|\::\: v\in B\}$.
Recall the properties \ref{it:wp1}, \ref{it:wp2} from Section \ref{sec:Bruijn}, which imply that $\pofw$ is a bijective, order-preserving correspondence between
$C_{[\bl,\infty)}$ and $\Path(\Gamma(C))$.
Hence $C_{[\bl,\infty)}$ is wqo if and only if $\Path(\Gamma(C))$ is wqo, which is the case if and only if 
$\Gamma(C)$ has a path-complete decomposition into bicycles by Theorem \ref{thm:grwqo}.
Furthermore, $C\setminus C_{[\bl,\infty)}$ is finite, and so $C$ is wqo if and only if $C_{[\bl,\infty)}$ is wqo,
completing the proof.
\end{proof}

\begin{proof}[Proof of Theorem \ref{thm:words} \ref{it:words-wqo}]
This follows from Theorem \ref{thm:f-wqo} and the observation that it is decidable whether $\Gamma(C)$ 
has a path-complete decomposition into bicycles; indeed one can equivalently check whether $\Gamma(C)$ 
has any in-out cycles by Theorem \ref{thm:grwqo}.
\end{proof}

As we mentioned in the Introduction, Theorem \ref{thm:words} \ref{it:words-wqo} has been proved in greater generality, for all regular avoidance sets, in \cite{atminas17}.

\begin{examples}
\label{exa:fato}
Let us return to the three sets from Example \ref{graph_example} and Figure \ref{fig:fgs}.
For the first one, $C=\Av(bb,aab)$, the factor graph is a bicycle. 
Furthermore, $C_{\{1,2\}}=\{a,b,aa, ab,ba\}$, and each of those words is a factor of a word of length $3$ in $C$,
e.g. $ baa$ or $ aba$.
Hence $C$ is atomic and wqo by Theorems \ref{thm:f-ato}, \ref{thm:f-wqo}.
For $C=\Av(aaa, baa, bba, bbb)$ the factor graph contains an in-out cycle $aba\rightarrow bab\rightarrow aba$,
and hence $C$ is neither atomic nor wqo. 
Finally, for $C=\Av( aa, aba, abb, bab)$, the factor graph is a bicycle,
but the word $ab\in C_{2}$ is not a factor of any word in $C_3$.
Hence $C$ is not atomic but is wqo.
\end{examples}

\section{Contiguous subpermutation ordering: atomicity}
\label{sec:perms-ato}

We now turn to permutations under the consecutive subpermutation ordering. In order to emphasise parallels and differences with the factor ordering on words, we will reuse much of the terminology and notation.
The key will again be a link with the subpath ordering on digraphs, via the notion of factor graph.
However, a major additional obstacle will be the fact that while a permutation will still determine a unique path in the graph, the converse will no longer be true, and a single path may correspond to several permutations. Characterising and recognising this ambiguity will be the key to deciding both atomicity and wqo.

A \emph{permutation} is a sequence $\sigma=a_1a_2\dots a_n$ of elements from $[n]$ in which every element appears precisely once. 
The set of all permutations will be denoted by $S$.
For a set $C\subseteq S$ and $k\in \mathbb{N}_0$, we 
define
$C_k:=\{ \sigma\in C\::\: |\sigma|=k\}$, the set of all permutations in $C$ of length $k$.
For a set $K\subseteq \mathbb{N}_0$ let
$C_K:=\bigcup_{k\in K} C_k$.

We will think of permutations as canonical representatives of sequences of distinct numbers.
For a sequence $\tau$ of length $n$ we use the notation $\tau|_i$ to denote its $i$th entry.
For a set $I=\{i_1<\dots<i_l\}\subseteq [n]$ we write $\tau|_I$ for the subsequence
$(\tau|_{i_1},\dots,\tau|_{i_l})$.
If $\tau$ is a finite sequence of distinct numbers, we will denote by $\perm(\tau)$ the unique permutation
$\sigma$ of the same length satisfying
\[
\tau|_i\leq \tau|_j \Leftrightarrow \sigma|_i\leq \sigma|_j \quad\text{for all } i,j\in \bigl[|\tau|\bigr].
\]
For example $\perm(3,-1,1/2,\sqrt{2})=4123$.

We say that a permutation $\tau$ of length $n$ contains a permutation $\sigma$ of length $m$
as a \emph{contiguous subpermutation} if 
there exists $i\in [n-m+1]$ such that $\sigma=\perm(\tau|_{[i,i+m-1]})$.
Alternatively, we say that $\sigma$ is a \emph{factor} of $\tau $ and write $\sigma\leq_f \tau$.
For example, we have $213\leq_f 23154$, because of the subsequence $315$, but
$123\nleq_f 23154$.
The relation $\leq_f$ is a partial ordering on $S$.
This ordering has been subject of considerable study, and \cite{eli16} contains an up to date survey.

We now define permutation analogues of de Bruijn graphs from Section \ref{sec:Bruijn}.
For $m\in\mathbb{N}$, the
\emph{$m$-dimensional permutation factor graph}
$\Gamma_m(S)$
has vertices $S_m$, and for two vertices $\sigma,\tau$ there is a directed edge $\sigma\rightarrow\tau$
if and only if 
$\perm(\sigma|_{[2,m]})=\perm(\tau|_{[1,m-1]})$.
These graphs have already been considered in \cite{avg08}.

One may be tempted to think that $\Gamma_m(S)$ is 
the induced subgraph of $\Bruijn_{[m],m}$ on the set $S_m$.
This, however, is not the case: the loop $123\rightarrow 123$ is present in $\Gamma_3(S)$ but not in $B_{[3],3}$.

Every permutation $\sigma$ of length $n\geq m$ defines a unique path in $\Gamma_m(S)$:
\[
\paofpe(\sigma):= \perm(\sigma|_{[1,m]})\rightarrow \perm(\sigma|_{[2,m+1]})\rightarrow\dots\rightarrow
\perm(\sigma|_{[n-m+1,n]}).
\]
However, this time the converse  correspondence, from paths to permutations, is trickier.
A path in the factor graph uniquely determines the $m$-factors of a corresponding permutation, but 
not how the entries that are farther than $m$ apart compare to each other.
For example, in the graph $\Gamma_3(S)$, each of the permutations
$2143$ and $3142$ traces the path $213\rightarrow 132$.
Hence, this time we need to assign a \emph{set} of permutations to a path $\pi$:
\[
\peofpa(\pi):=\{\sigma\in S\::\: \paofpe(\sigma)=\pi\}.
\]
Alternatively, if $\pi=\tau_1\rightarrow \tau_2\rightarrow\dots\rightarrow\tau_n$, then $\peofpa(\pi)$
consists of all permutations $\sigma$ of length $n+m-1$ satisfying
\[
\perm(\sigma|_{[i,i+m-1]})=\tau_i \quad\text{for all } i=1,\dots,n.
\]
When applying $\Sigma$ to a set of paths $P\subseteq \Path(\Gamma_m(S))$ we will abuse the notation a little and write
$\Sigma(P)$ for the set
$\bigcup\{ \Sigma(\pi)\::\: \pi\in P\}$.

\begin{example}
Consider the graph $\Gamma_3(S)$. 
The path associated with the permutation $34512$ is
\[
\paofpe(34512)=\bigl(\perm(345)\rightarrow\perm(451)\rightarrow\perm(512)\bigr)=(123\rightarrow 231\rightarrow 312).
\]
The set of permutations associated to this path is
\[
\peofpa(123\rightarrow 231\rightarrow 312)=\{13524,14523,23514,24513,34512\}.
\]
One way of seeing this is to start with $123$, and interpret the edge $123\rightarrow 231$ as an instruction to insert a last entry of value $<2$. This yields two permutations of length $4$, namely
$1342$ and $2341$. Then the edge $231\rightarrow 312$ means that in each of these two permutations we need to insert a new last entry whose value is between the two entries preceding it.
There are two ways of doing this for $1342$ and three for $2341$, yielding the above five permutations.
\end{example}

To parallel the properties \ref{it:wp1}, \ref{it:wp2} for words, we now have:

\begin{enumerate}[label=\textsf{($\Sigma\Pi$\arabic*)}, widest=($\Sigma\Pi$2), leftmargin=15mm]
\item
\label{it:sp1}
For all $\sigma\in S_{[m,\infty)}$ and all $\pi\in\Path(\Gamma_m(S))$ we have
\[
\sigma\in \peofpa(\paofpe(\sigma)) \quad \text{and} \quad 
\paofpe(\tau)=\pi \text{ for all } \tau\in\peofpa(\pi).
\]
\item
\label{it:sp2}
$\paofpe$ respects $\leq_f$:
\[
\sigma_1\leq_f \sigma_2\Rightarrow \paofpe(\sigma_1)\leq_p\paofpe(\sigma_2).
\]
\item
\label{it:sp3}
$\peofpa$ respects $\leq_p$ in the following sense:
If $\pi_1\leq_p \pi_2$ then for every
$\rho_2\in \peofpa(\pi_2)$ there exists $\rho_1\in\peofpa(\pi_1)$ such
that $\rho_1\leq_f\rho_2$; and for every $\sigma_1\in\peofpa(\pi_1)$ there exists
$\sigma_2\in\peofpa(\pi_2)$ such that $\sigma_1\leq_f\sigma_2$.
\end{enumerate}

The properties \ref{it:sp1}, \ref{it:sp2} follow from the definition and the fact that every permutation corresponds to a unique path. 
For the first statement in \ref{it:sp3}, if 
\[
\pi_1=(\tau_k\rightarrow \tau_{k+1}\rightarrow\dots\rightarrow \tau_l)\leq_p
(\tau_1\rightarrow \tau_{2}\rightarrow\dots\rightarrow \tau_n)=\pi_2,
\]
we can simply take
\[
\rho_1=\perm(\rho_2|_{[k,m+l-1]}).
\]
Finally, the second statement in \ref{it:sp3} follows directly from the following technical lemma with $t=1$:

\begin{lemma}
\label{lem:2sub2}
Suppose that a path $\pi$ in $\Gamma_m(S)$ can be expressed as a juxtaposition 
\[
\pi=\pi_1\tau_1\pi_2\tau_2\dots \pi_t\tau_t\pi_{t+1},
\]
where $t\geq 1$ and all $\pi_2,\dots, \pi_t$ have length at least $m$.
Let $\sigma_i\in\peofpa(\tau_i)$ for $i=1,\dots,t$.
Then there exists a permutation
\[
\rho=\rho_1\nu_1\rho_2\nu_2\dots\rho_t\nu_t\rho_{t+1}\in \peofpa(\pi),
\]
 where
\begin{gather}
\nonumber
\perm(\nu_i)=\sigma_i\ (i=1,\dots,t),\\
\label{eq:2s2}
|\rho_1|=|\pi_1|,\ |\rho_i|=|\pi_i|-m\ (i=2,\dots,t),\ |\rho_{t+1}|=|\pi_{t+1}|,\\
\nonumber
|\nu_i|=|\tau_i|+m\ (i=1,\dots,t).
\end{gather}
\end{lemma}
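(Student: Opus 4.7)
The plan is to build a real-valued sequence of length $|\pi|+m$ that realises the path $\pi$ and carries each $\sigma_i$ as the pattern of a prescribed chunk, then normalise via $\perm$ to obtain the required permutation $\rho$. The construction proceeds inside-out: one starts at the rightmost block (a copy of $\sigma_t$) and alternates between single-edge extensions and $\sigma_i$-block prependings until the full path $\pi$ is realised.

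The two basic operations are (i) an \emph{edge extension}, which appends (respectively prepends) one real value to a sequence realising a path $\xi$ so that the new sequence realises $\xi$ followed (respectively preceded) by a prescribed edge of $\Gamma_m(S)$; and (ii) a \emph{$\sigma_i$-block prepend}, which assumes the current sequence begins with an $m$-window whose pattern is the last vertex of $\tau_i$, and then prepends $|\tau_i|$ new real values so that the first $|\tau_i|+m=|\sigma_i|$ positions form a pattern-copy of $\sigma_i$. Concretely, the algorithm initialises the sequence as $\sigma_t$ itself (which realises $\tau_t$), extends on the right by $\pi_{t+1}$ edge by edge, prepends $\pi_t$ edge by edge, and then, for $i=t-1,t-2,\dots,1$, executes a $\sigma_i$-block prepend followed by an edge-by-edge prepend of $\pi_i$. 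After all operations the sequence realises $\pi$, has length $|\pi|+m$, and has each $\sigma_i$ positioned at the slot dictated by the length identities in \eqref{eq:2s2}; normalising by $\perm$ then yields the permutation $\rho$.

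The edge-extension step is routine: the overlap condition $\perm(V|_{[2,m]})=\perm(V'|_{[1,m-1]})$ on an edge $V\rightarrow V'$ of $\Gamma_m(S)$ forces the existing last $m-1$ values already to carry the required relative order, and the prescribed rank $V'(m)$ selects an interval of $\mathbb{R}$ (cut out by those $m-1$ values) from which the new real value may be chosen. The main obstacle I expect is justifying the $\sigma_i$-block prepend. Its feasibility rests on the compatibility observation that, just before the step, the first $m$ values of the current sequence form a window whose pattern is the last vertex of $\tau_i$, which equals $\perm(\sigma_i|_{[|\tau_i|+1,|\sigma_i|]})$ because $\sigma_i\in\peofpa(\tau_i)$; with the last $m$ entries of $\sigma_i$ thus already in place up to order-isomorphism, the first $|\tau_i|$ entries may be realised as fresh real numbers slotted into the intervals cut out by the existing first $m$ values so as to achieve the ranks that $\sigma_i$ prescribes. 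The hypothesis $|\pi_i|\geq m$ for $i=2,\dots,t$ is precisely what is needed to ensure that successive $\sigma_i$-chunks do not overlap after each block prepend, and each $\sigma_j$-chunk once laid down survives all subsequent prependings because these only extend the sequence on the left and hence do not alter the relative order of previously placed entries.
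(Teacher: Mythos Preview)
Your proposal is correct and follows essentially the same strategy as the paper: build a real-valued sequence realising $\pi$ in stages, alternating between path-extensions and $\sigma_i$-block insertions, then normalise via $\perm$. The only difference is cosmetic---the paper proceeds left to right (starting from a realisation of $\pi_1$, then appending $\sigma_1$, then $\pi_2$, etc.), whereas you proceed mainly right to left starting from $\sigma_t$; both rely on the hypothesis $|\pi_i|\geq m$ in the same way, to keep the $\sigma_i$-chunks disjoint.
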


Intuitively, the lemma says that, in a permutation corresponding to a path, subpermutations corresponding to sufficiently spaced subpaths can be picked freely and independently of each other.

\begin{proof}[Proof of Lemma \ref{lem:2sub2}]
In several stages we build a sequence $\rho'=\rho_1''\nu_1'''\dots\rho_t''\nu_t'''\rho_{t+1}''$ such that $\perm(\rho')$ is the desired permutation. At each stage we will add some entries to the sequence constructed previously. 
It will be taken as read that these new entries are chosen to be distinct from any already in use.

To begin the construction we pick any sequence $\rho_1'$ such that $\paofpe(\perm(\rho_1'))=\pi_1$.
Note that $|\rho_1'|=|\pi_1|+m$, and write $\rho_1'=\rho_1''\nu_1'$, where $|\rho_1''|=|\pi_1|$ and $|\nu_1'|=m$.
The permutation $\perm(\nu_1')$ is the end vertex of the path $\pi_1$, and hence the start vertex of the path $\tau_1$.
Since $\sigma_1\in\peofpa(\tau_1)$, there exists a sequence $\nu_1''$ such that for $\nu_1''':= \nu_1'\nu_1''$
we have $\perm(\nu_1''')=\sigma_1$.

Now split $\nu_1'''=\nu_1^{\text{iv}}\nu_1^{\text{v}}$ with $|\nu_1^{\text{v}}|=m$, and note that $\perm(\nu_1^{\text{v}})$
is the end vertex of $\tau_1$, and hence the start vertex of $\pi_2$.
Therefore there exists a sequence $\rho_2'$ such that $\paofpe(\perm(\nu_1^{\text{v}}\rho_2'))=\pi_2$.
From $|\nu_1^{\text{v}}\rho_2'|=|\pi_2|+m$ and $|\nu_1^{\text{v}}|=m$ it follows that
$|\rho_2'|=|\pi_2|\geq m$.
We can therefore write $\rho_2'=\rho_2''\nu_2'$ where $|\nu_2'|=m$.
Now $\nu_2'$ can be extended to a sequence $\nu_2'''$ such that $\perm(\nu_2''')=\sigma_2$.
This sequence is split $\nu_2'''=\nu_2^{\text{iv}}\nu_2^{\text{v}}$, $|\nu_2^{\text{v}}|=m$,
and the process continues to construct sequences $\rho_3'',\nu_3''',\dots,\rho_{t+1}''$.

Letting $\rho':=\rho_1''\nu_1'''\dots \rho_t''\nu_t'''\rho_{t+1}''$, the desired permutation is $\rho:=\perm(\rho')$,
with the decomposition $\rho=\rho_1\nu_1\dots\rho_t\nu_t\rho_{t+1}$, where
$|\rho_i|=|\rho_i''|$ ($i=1,\dots,t+1$) and $|\nu_i|=|\nu_i'''|$ ($i=1,\dots,t$).
Properties \eqref{eq:2s2} follow easily from the corresponding properties of $\rho'$ as recorded during construction.
\end{proof}

For a finitely based set $C=\Av(B)$ of permutations, let $\bl=\max\{ |\beta|\::\: \beta\in B\}$, and let $m\geq \bl$.
The \emph{$m$-dimensional factor graph} $\Gamma_m(C)$ is the induced subgraph of $\Gamma_m(S)$ on
the vertex set $C_m$. 
Clearly $C_{[m,\infty)}=\peofpa(\Path(\Gamma_m(C)))$, i.e. the permutations of length $\geq m$ in $C$ are precisely 
the permutations corresponding to the paths in $\Gamma_m(C)$.
The graph $\Gamma_\bl(C)$ will be called the \emph{factor graph} of $C$ and denoted by $\Gamma(C)$.

\begin{example}
\label{exa:123321}
Let $C=\Av(123,321)$. Then $\bl=3$ and $C_3=\{132,231,213,312\}$.
The factor graph $\Gamma(C)$ is shown in Figure \ref{fig:123321}.
\end{example}
 
 \begin{figure}
\begin{center}
\begin{tikzpicture}[> = {Stealth [scale=1.3]}, thick,bend angle=20]
\tikzstyle{every state}=[thick]
\node[state, shape = ellipse, minimum size = 15pt] (A) at (0,0) {132};
\node[state, shape = ellipse, minimum size = 15pt] (B) at (3.2,0) {213};
\node[state, shape = ellipse, minimum size = 15pt] (C) at (3.2,3) {231};
\node[state, shape = ellipse, minimum size = 15pt] (D) at (0,3) {312};
\path[->]
(A) edge [bend left] node {} (B)
(B) edge [bend left] node {} (A)
(B) edge [bend left] node {} (C)
(C) edge [bend left] node {} (B)
(C) edge [bend left] node {} (D)
(D) edge [bend left] node {} (C)
(A) edge [bend left] node {} (D)
(D) edge [bend left] node {} (A)
;
\end{tikzpicture}

\caption{The graph $\Gamma(\Av(123,321))$.}
\label{fig:123321}

\end{center}
\end{figure}
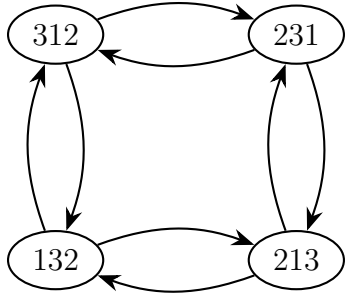

We call a path $\pi$ in $\Gamma(C)$ \emph{ambiguous} if $|\peofpa(\pi)|>1$ and \emph{unambiguous}
if $|\peofpa(\pi)|=1$.
We remark in passing that this notion has been  considered  in \cite{avg08}, albeit using a different terminology.
If $\pi$ is unambiguous, we will treat $\peofpa(\pi)$ as a permutation rather than a singleton set.
It turns out that, in addition to the conditions necessary for atomicity established for words in 
Theorem \ref{thm:f-ato}, in the permutation case the non-existence of ambiguous paths must also be added to the list.
We now proceed to establish this (Theorem \ref{thm:permatom}), and, having done that, we will be faced with the task of deciding the existence of ambiguous paths (Lemma \ref{lem:ambdec1}).

In all the lemmas in this section $C$ will denote a finitely based class of permutations under the factor ordering.
First we note that, as an immediate consequence of  Lemma \ref{lem:2sub2} for $t=1$, the set of unambiguous paths is downward closed:

\begin{lemma}
\label{lem:ambsub}
If $\pi$ is an unambiguous path in $\Gamma(C)$ and $\tau\leq_p\pi$ then $\tau$ is also unambiguous.
\end{lemma}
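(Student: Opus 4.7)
My plan is to prove the contrapositive: assuming $\tau$ is ambiguous, I will exhibit two distinct elements of $\Sigma(\pi)$. Since $\tau\leq_p\pi$, I can decompose the path as a juxtaposition $\pi=\pi_1\tau\pi_2$, which is exactly the shape required to apply Lemma~\ref{lem:2sub2} with $t=1$ (where there are no intermediate $\pi_i$ required to have length $\geq m$, so the length hypothesis is vacuous).

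Pick two distinct permutations $\sigma,\sigma'\in\Sigma(\tau)$, which exist by assumption. Applying Lemma~\ref{lem:2sub2} once with the chosen factor $\sigma_1:=\sigma$ produces a permutation $\rho=\rho_1\nu_1\rho_2\in\Sigma(\pi)$ with $\perm(\nu_1)=\sigma$ and $|\nu_1|=|\tau|+m$. Applying it again with the choice $\sigma_1:=\sigma'$ produces a permutation $\rho'=\rho_1'\nu_1'\rho_2'\in\Sigma(\pi)$ with $\perm(\nu_1')=\sigma'$ and $|\nu_1'|=|\tau|+m$, using the same block lengths dictated by \eqref{eq:2s2}.

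The concluding step is to check that $\rho\neq\rho'$, which witnesses $|\Sigma(\pi)|>1$ and hence the ambiguity of $\pi$. This is the only place that needs a small argument: since the lengths $|\rho_1|=|\rho_1'|$ and $|\nu_1|=|\nu_1'|$ are determined by the same decomposition of $\pi$, the entries of $\rho$ and $\rho'$ occupying the positions of the central block have patterns $\perm(\nu_1)=\sigma$ and $\perm(\nu_1')=\sigma'$ respectively; if $\rho$ and $\rho'$ were equal then these two patterns would coincide, contradicting $\sigma\neq\sigma'$.

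I do not foresee a serious obstacle here; the real content is already packaged inside Lemma~\ref{lem:2sub2}, which guarantees that an arbitrary preimage under $\Sigma$ of a subpath can be realised as the corresponding factor pattern of some preimage of the whole path. The only mildly delicate point is the bookkeeping of positions when deducing $\rho\neq\rho'$ from $\sigma\neq\sigma'$, and this is immediate from the length identities in \eqref{eq:2s2}.
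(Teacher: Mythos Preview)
Your proof is correct and follows exactly the approach the paper intends: the paper states this lemma as an immediate consequence of Lemma~\ref{lem:2sub2} with $t=1$, and you have simply spelled out that consequence in detail, including the (straightforward) verification that the two resulting permutations are distinct.
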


Next we prove that ambiguous paths are an obstacle to atomicity:

\begin{lemma}
\label{lem:ambxat}
If there is an ambiguous path $\pi$ that is not contained in a single strongly connected component of $\Gamma(C)$ then
$C$ is not atomic.
\end{lemma}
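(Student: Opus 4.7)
The plan is to show that $C$ fails the joint embedding property by producing two elements of $C$ with no common upper bound in $C$; by the folklore characterisation recalled in Section \ref{sec:intro}, this suffices to conclude non-atomicity. Since $\pi$ is ambiguous, I can pick distinct $\sigma_1, \sigma_2 \in \peofpa(\pi)$; both lie in $C$ and both have length $n := k + \bl - 1$, where $\pi = \tau_1\rightarrow \tau_2 \rightarrow\dots\rightarrow \tau_k$. Suppose, for contradiction, that some $\rho \in C$ satisfies $\sigma_1, \sigma_2 \leq_f \rho$, say $\sigma_\ell = \perm(\rho|_{[i_\ell, i_\ell + n - 1]})$ for $\ell = 1, 2$. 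If $i_1 = i_2$ the two expressions coincide, forcing $\sigma_1 = \sigma_2$; so I may assume $i_1 < i_2$.

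Next I would lift the two embeddings to the path level. Unpacking the definition of $\paofpe$ directly (or, equivalently, invoking property \ref{it:sp2}) shows that $\pi$ appears as a subpath of $\paofpe(\rho)$ starting at each of positions $i_1$ and $i_2$, in the sense that the $(i_\ell + a - 1)$st vertex of $\paofpe(\rho)$ equals $\tau_a$ for every $a \in [1, k]$ and $\ell \in \{1,2\}$. The key structural ingredient is the elementary observation that any directed path in $\Gamma(C)$ visits the strongly connected components along it in strictly increasing order under the partial order induced by $\rightarrow^\ast$, and in particular visits no SCC more than once. Since $\pi$ is not contained in a single SCC, this forces $\tau_1$ and $\tau_k$ to lie in different SCCs.

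The final step splits into two cases according to whether the two occurrences of $\pi$ inside $\paofpe(\rho)$ overlap. If the occurrences are disjoint, $i_2 \geq i_1 + k$, then the portion of $\paofpe(\rho)$ running from position $i_1 + k - 1$ to position $i_2$ is a walk from $\tau_k$ to $\tau_1$ in $\Gamma(C)$, forcing $\tau_k \rightarrow^\ast \tau_1$ and placing $\tau_1$ and $\tau_k$ in the same SCC, contradicting the previous paragraph. If the occurrences overlap, $i_2 < i_1 + k$, set $d := i_2 - i_1 \geq 1$ and compare the two occurrences at the shared positions of $\paofpe(\rho)$ to obtain the periodicity $\tau_{a+d} = \tau_a$ for all $a \in [1, k - d]$. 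In particular $\tau_{d+1} = \tau_1$, so $\tau_1 \rightarrow \tau_2 \rightarrow \dots \rightarrow \tau_d \rightarrow \tau_1$ is a closed walk in $\Gamma(C)$ and the vertices $\tau_1, \dots, \tau_d$ lie in a single SCC; the periodicity then forces $\tau_a \in \{\tau_1, \dots, \tau_d\}$ for every $a \in [1, k]$, so all vertices of $\pi$ inhabit one SCC, contradicting the previous paragraph once more.

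The main obstacle is the overlapping case: one has to extract the periodicity $\tau_{a+d} = \tau_a$ cleanly from the two offset occurrences of $\pi$ inside $\paofpe(\rho)$ and then combine it with the closed walk on $\tau_1, \dots, \tau_d$ to collapse the entire vertex set of $\pi$ into a single SCC. The non-overlapping case and the initial passage from the permutation-level embeddings $\sigma_\ell \leq_f \rho$ to the path-level occurrences of $\pi$ in $\paofpe(\rho)$ are comparatively routine, relying only on properties \ref{it:sp1}, \ref{it:sp2} from the present section and the elementary fact that paths in any finite digraph do not revisit strongly connected components.
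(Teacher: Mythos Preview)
Your argument is correct and follows essentially the same approach as the paper: take two distinct permutations in $\peofpa(\pi)$, assume a common upper bound $\rho$, lift via $\paofpe$ to obtain two distinct occurrences of $\pi$ in $\paofpe(\rho)$, and conclude that all vertices of $\pi$ lie in a single strongly connected component. The paper handles the final step more tersely, observing at once that every edge of $\pi$ is traversed twice by $\paofpe(\rho)$ and hence consecutive vertices of $\pi$ share an SCC, whereas you spell out the overlapping/non-overlapping cases explicitly; the content is the same.
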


\begin{proof}
Suppose $\pi=\gamma_1\rightarrow \dots\rightarrow \gamma_l$, 
and let $ \sigma_1,\sigma_2$ be two distinct elements from $\peofpa(\pi)$.
We claim that there is no permutation $\rho\in C$ with $\sigma_1,\sigma_2\leq_f\rho$.
Suppose, aiming for contradiction, that there is.
The path $\paofpe(\rho)$ contains two distinct subpaths $\tau_1,\tau_2$ such that
 $\sigma_1\in\peofpa(\tau_1)$ and $\sigma_2\in\peofpa(\tau_2)$.
 This means that every edge $\gamma_i\rightarrow\gamma_{i+1}$ of $\pi$ is traversed twice by
 $\paofpe(\rho)$, and, as a consequence, all $\gamma_1,\dots,\gamma_l$ belong to the same strongly connected component of $\Gamma(C)$, a contradiction.
\end{proof}

\begin{lemma}
\label{lem:bcamb}
If $\Gamma(C)$ is a bicycle with more than one strongly connected component and if it has an ambiguous path
then $C$ is not atomic.
\end{lemma}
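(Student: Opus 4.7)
The plan is to reduce to the case where the ambiguous path $\pi$ is entirely contained in one of the non-trivial strongly connected components of $\Gamma(C)$: this is forced by Lemma \ref{lem:ambxat}, which handles the case of an ambiguous path spanning multiple SCCs. Since a single-vertex path is unambiguous and the only SCCs of a bicycle that can host a non-trivial path are the two cycles, the path $\pi$ must lie entirely on $\pi_1$ or $\pi_2$; by symmetry, assume $\pi\subseteq\pi_1$. Because $\Gamma(C)$ has more than one SCC and $\pi_1$ is one of them, the connecting path $\tau$ is present and has at least one edge leaving $\pi_1$.

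The crux is that $\tau$ appears at most once as a subpath of any path in $\Gamma(C)$: after leaving $\pi_1$ along $\tau$ one cannot return, and the second vertex of $\tau$ does not lie on $\pi_1$ and hence cannot be embedded inside a $\pi_1$-loop. I would therefore extend $\pi$ by walking around $\pi_1$ until it terminates at the start vertex $v_1$ of $\tau$; the extension is still ambiguous by the contrapositive of Lemma \ref{lem:ambsub}. Picking two distinct $\sigma_1,\sigma_2\in\peofpa(\pi)$ and setting $\alpha:=\pi\tau$, an application of Lemma \ref{lem:2sub2} with $t=1$ then yields permutations $\rho^{(1)},\rho^{(2)}\in\peofpa(\alpha)\subseteq C$ whose $\pi$-portions are $\sigma_1$ and $\sigma_2$ respectively.

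Finally, I would show that $\rho^{(1)}$ and $\rho^{(2)}$ have no common upper bound in $C$. If some $\rho\in C$ contained both as factors, each occurrence of $\rho^{(i)}$ in $\rho$ would carry along a copy of $\alpha$, and hence of $\tau$, as a subpath of $\paofpe(\rho)$, at a fixed offset from the start of the occurrence. The uniqueness of the $\tau$-position in $\paofpe(\rho)$ would then force the two occurrences of $\alpha$ to start at the same index of $\rho$, hence $\rho^{(1)}=\rho^{(2)}$, contradicting $\sigma_1\neq\sigma_2$. The main obstacle is the bookkeeping across the degenerate bicycle shapes (one or both cycles absent, $\tau$ of various positive lengths) and the symmetric case $\pi\subseteq\pi_2$, which is handled by taking $\alpha:=\tau\pi$ and anchoring to $\tau$ as an upstream segment.
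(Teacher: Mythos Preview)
Your argument is correct, but it is considerably more elaborate than needed, and in particular more elaborate than the paper's proof. Once you have extended the ambiguous path $\pi$ so that it ends at the exit vertex $v_1$, you form $\alpha=\pi\tau$, which leaves $\pi_1$ and hence is not contained in a single strongly connected component. By the contrapositive of Lemma~\ref{lem:ambsub}, $\alpha$ is itself ambiguous (since $\pi\leq_p\alpha$). At this point Lemma~\ref{lem:ambxat} applies \emph{again}, directly to $\alpha$, and the proof is finished. This is exactly what the paper does: it observes that in a bicycle the components are linearly ordered, so any ambiguous path either already spans two components or can be extended to one that does, and then invokes Lemma~\ref{lem:ambxat}.

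Your second and third paragraphs instead reprove the conclusion of Lemma~\ref{lem:ambxat} by hand for the specific path $\alpha$: you manufacture two explicit elements of $\peofpa(\alpha)$ via Lemma~\ref{lem:2sub2} and then use the fact that the first edge of $\tau$ can be traversed at most once to anchor any common extension. This is sound, and the anchoring argument is a pleasant alternative to the ``edge traversed twice implies same component'' reasoning inside the proof of Lemma~\ref{lem:ambxat}, but it buys you nothing extra here and introduces the degenerate-case bookkeeping you flag at the end. The paper's route avoids all of that: no appeal to Lemma~\ref{lem:2sub2}, no uniqueness-of-$\tau$ discussion, no separate treatment of $\pi\subseteq\pi_2$.
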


\begin{proof}
This is an immediate consequence of the previous lemma: either the ambiguous path visits at least two strongly connected components, or it can be extended to such a path, because the components in a bicycle are linearly ordered.
\end{proof}

\begin{thm}
\label{thm:permatom}
Let $C$ be an infinite, finitely based downward closed set of permutations under the consecutive subpermutation ordering, and let $\bl$ be the maximum length of a basis element of $C$.
Then $C$ is atomic if and only if the following two conditions are satisfied:
\begin{thmenumerate}
\item
\label{it:pat1}
the factor graph $\Gamma(C)$ is either strongly connected, or it is a bicycle with no ambiguous paths; and
\item
\label{it:pat2}
for each permutation $\sigma\in C$ with $|\sigma|<\bl$ there is a permutation $\rho\in C$ with $|\rho|=\bl$ which contains $\sigma$ as a contiguous subpermutation.
\end{thmenumerate}
\end{thm}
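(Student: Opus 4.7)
The strategy will follow the template of Theorem \ref{thm:f-ato}, but with a crucial new wrinkle: a single path in $\Gamma(C)$ can correspond to many different permutations in $\Sigma$. A naive copy-paste of the words proof would fail because, having produced a common upper-bound path for $\Pi(\sigma')$ and $\Pi(\tau')$ from atomicity of $\Path(\Gamma(C))$, one has no control over which member of $\Sigma$ of that path to pick, and the chosen member need not contain the specific extensions $\sigma',\tau'$ that were used to define the $\Pi$-images. The technical tool that lets us prescribe subpermutations along a path is Lemma \ref{lem:2sub2}, which works whenever the designated subpaths are separated by at least $\bl$ edges of buffer.

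For the $(\Leftarrow)$ direction, given $\sigma,\tau\in C$, I would first invoke \ref{it:pat2} to produce $\sigma',\tau'\in C_{[\bl,\infty)}$ with $\sigma\leq_f\sigma'$ and $\tau\leq_f\tau'$, and set $\pi_1:=\Pi(\sigma')$, $\pi_2:=\Pi(\tau')$. If $\Gamma(C)$ is strongly connected, I would join $\pi_1$ to $\pi_2$ by a path $\xi$, inflating it (by revolving around any available cycle) to have length at least $\bl$, and then invoke Lemma \ref{lem:2sub2} with $t=2$ and prescribed subpermutations $\sigma',\tau'$, producing a single permutation in $\Sigma(\pi_1\xi\pi_2)\subseteq C$ that contains both $\sigma'$ and $\tau'$ as factors. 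If instead $\Gamma(C)$ is a bicycle with no ambiguous paths, Theorem \ref{thm:graphs-ato} delivers a common upper-bound path $\pi\geq_p\pi_1,\pi_2$; unambiguity of $\pi$, together with Lemma \ref{lem:ambsub} (which passes unambiguity down to $\pi_1,\pi_2$) and property \ref{it:sp3}, forces the unique member of $\Sigma(\pi)$ to contain both $\sigma'$ and $\tau'$.

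For the $(\Rightarrow)$ direction I would argue by contrapositive, splitting into three failure modes. If $\Gamma(C)$ is neither strongly connected nor a bicycle, Theorem \ref{thm:graphs-ato} supplies paths with no common upper bound, and property \ref{it:sp2} lifts this non-joint-embedding to any associated permutations in $C$. If $\Gamma(C)$ is a bicycle carrying an ambiguous path, Lemma \ref{lem:bcamb} applies directly. Finally, if \ref{it:pat2} fails, let $\sigma$ be a witness, and take any $w\in C_\bl$ (nonempty because $C$ is infinite, downward closed, and therefore contains permutations of every length); a joint upper bound in $C$ would have length at least $\bl$ and hence some length-$\bl$ factor containing $\sigma$, contradicting the choice of $\sigma$.

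The delicate step is the strongly-connected sub-case of $(\Leftarrow)$: ambiguity is genuinely present there, and without Lemma \ref{lem:2sub2} one could not guarantee that the permutation recovered from a covering path matches the pre-chosen extensions $\sigma'$ and $\tau'$ simultaneously. Inflating the connecting subpath $\xi$ to secure the required length-$\bl$ buffer is precisely what makes the lemma applicable here.
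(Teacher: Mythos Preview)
Your proposal is correct and follows essentially the same route as the paper's proof: the same contrapositive case split for $(\Rightarrow)$ using Theorem~\ref{thm:graphs-ato} and Lemma~\ref{lem:bcamb}, and the same two-case treatment of $(\Leftarrow)$ via Lemma~\ref{lem:2sub2} (with a length-$\bl$ connecting path) in the strongly connected case and via unambiguity plus \ref{it:sp3} in the bicycle case. Your additional remarks on inflating $\xi$ and on invoking Lemma~\ref{lem:ambsub} make explicit points the paper leaves implicit, but the argument is the same.
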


\begin{proof}
The proof is very similar to that of Theorem \ref{thm:f-ato}. The only substantive difference is the treatment of bicycles with ambiguous paths, which relies on Lemma \ref{lem:bcamb}.

($\Rightarrow$)
We prove the contrapositive.
Suppose first that \ref{it:pat1} does not hold. If $\Gamma(C)$ is a bicycle with an ambiguous path then $C$ is not atomic by Lemma \ref{lem:bcamb}.
Otherwise, $\Gamma(C)$ is neither strongly connected nor a bicycle, in which case $\Path(\Gamma(C))$ is not atomic by
Theorem \ref{thm:graphs-ato}.
Hence there exist two paths $\pi$ and $\tau$ which are not subpaths of a single path in $\Gamma(C)$.
Let $\sigma\in\peofpa(\pi)$, $\rho\in\peofpa(\tau)$ be arbitrary.
We claim that there does not exist $\nu\in C$ which contains both $\sigma$ and $\rho$ as contiguous subpermutations. Suppose it does.
By \ref{it:sp2}, the path $\paofpe(\nu)$ contains subpaths $\paofpe(\sigma)$ and $\paofpe(\rho)$, which equal 
$\pi$ and $\tau$ respectively by \ref{it:sp1}, a contradiction.

Suppose now that \ref{it:fat2} does not hold, so there is $\sigma\in C_{[\bl)}$ such that
$\sigma\nleq_f \rho$ for all $\rho\in C_\bl$.
Since $C$ is infinite we must have $C_\bl\neq\emptyset$; let $\nu\in C_\bl$ be arbitrary.
Then $\sigma$ and $\nu$ cannot be contiguous subpermutations of some $\mu\in C$. Indeed, if they were, then 
$|\mu|\geq |\nu|\geq \bl$, and 
the contiguous copy of $\sigma$ in $\mu$ would be contained in a contiguous subpermutation of length $\bl$
which would belong to $C$ by downward closure.
Hence $C$ is not atomic in this case either.

($\Leftarrow$)
Suppose that \ref{it:fat1} and \ref{it:fat2} hold.
Let $\rho,\sigma\in C$ be arbitrary.
Using \ref{it:fat2} if necessary, there exist $\rho^\prime,\sigma^\prime\in C_{[\bl,\infty)}$ such that
$\rho\leq_f \rho^\prime$ and $\sigma\leq_f \sigma^\prime$.
We claim that $\rho^\prime$ and $\sigma^\prime$ are factors of a single permutation from $C$.
It then follows that so are $\rho$ and $\sigma$, proving the joint embedding property, and hence $C$ is atomic as required.

To prove the claim, we let $\pi=\paofpe(\rho^\prime)$, $\tau=\paofpe(\sigma^\prime)$, 
noting that we need $|\rho^\prime|,|\sigma^\prime|\geq \bl$ for this step, 
and split the considerations into two cases, depending on which alternative from \ref{it:pat1} holds.

Suppose first that $\Gamma(C)$ is strongly connected.
Then there exists a path $\xi$ of length $\geq \bl$ from the end-point of $\pi$ to the start-point of $\tau$.
Applying Lemma \ref{lem:2sub2} (with $t=2$) to the path $\pi\xi\tau$ yields the existence of a permutation in $C$ which contains a prefix isomorphic to $\rho^\prime$ and a suffix isomorphic to $\sigma^\prime$.

Suppose now that $\Gamma(C)$ is a bicycle without ambiguous paths.
By Theorem \ref{thm:graphs-ato} we have that $\Path(\Gamma(C))$ is atomic;
hence $\pi$ and $\tau$ are subpaths of some path $\xi\in\Path(\Gamma(C))$.
Since $\Gamma(C)$ has no ambiguous paths, property \ref{it:sp3} implies that
the permutation $\peofpa(\xi)\in C$ contains the permutations $\peofpa(\pi)=\rho$ and $\peofpa(\tau)=\sigma$ as factors.
\end{proof}

We now turn towards proving that it is decidable whether $\Gamma(C)$ contains ambiguous paths.
We show how to check whether a single path is ambiguous (Lemma \ref{lem:ambdec}) and prove that the existence of ambiguous paths implies the existence of `short' ambiguous paths (Lemma \ref{lem:bound}).

The first lemma characterises when prepending an unambiguous path by an edge yields another unambiguous path.

\begin{lemma}
\label{lem:lins}
Let $\pi=\gamma_1\rightarrow\dots\rightarrow\gamma_m$ be a path in $\Gamma(C)$ such  that the subpath
$\gamma_2\rightarrow\dots\rightarrow\gamma_m$ is unambiguous.
Let $\sigma\in\peofpa(\pi)$ be arbitrary.
If $\gamma_1|_1\neq 1$, let $i$ be such that $\gamma_1|_i=\gamma_1|_1-1$; likewise
if $\gamma_1|_1\neq \bl$, let j be such that $\gamma_1|_j=\gamma_1|_1+1$.
Then
\[
\pi \text{ is unambiguous} \Leftrightarrow
\begin{cases}
\gamma_1|_1\not\in \{1,\bl\}\ \&\ \sigma|_j-\sigma|_i=2; \text{ or}\\
\gamma_1|_1=1\ \&\ \sigma|_j=2; \text{ or}\\
\gamma_1|_1=\bl\ \&\ \sigma|_i=\bl-1.
\end{cases}
\]
\end{lemma}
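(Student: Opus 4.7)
The plan is to reduce the ambiguity question to an explicit counting problem by parametrising $\peofpa(\pi)$ by a single integer.

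Since the subpath $\gamma_2\rightarrow\dots\rightarrow\gamma_m$ is unambiguous, there is a unique permutation $\rho$ of length $m+\bl-2$ corresponding to it; hence any $\sigma\in\peofpa(\pi)$ satisfies $\perm(\sigma|_{[2,m+\bl-1]})=\rho$, and so is determined by the single value $v:=\sigma|_1$ via the rule that $\sigma|_{t+1}$ equals $\rho|_t$ when $\rho|_t<v$ and $\rho|_t+1$ otherwise. The constraint $\perm(\sigma|_{[1,\bl]})=\gamma_1$ then amounts to $v$ having rank $k:=\gamma_1|_1$ amongst the first $\bl$ entries of $\sigma$, which, since this rule preserves strict inequality with $v$, is equivalent to requiring exactly $k-1$ of the entries $\rho|_1,\dots,\rho|_{\bl-1}$ to be less than $v$.

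Listing those $\bl-1$ entries in increasing order as $r_1<r_2<\dots<r_{\bl-1}$, the admissible values of $v$ form the integer interval $(r_{k-1},r_k]$, under the boundary conventions $r_0:=0$ and $r_\bl:=m+\bl-1$. Hence $|\peofpa(\pi)|=r_k-r_{k-1}$, and $\pi$ is unambiguous iff this count equals $1$: that is, $r_1=1$ when $k=1$, $r_{\bl-1}=m+\bl-2$ when $k=\bl$, and $r_k=r_{k-1}+1$ in the middle case.

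To recover the form stated in the lemma, I would identify $\sigma|_i$ and $\sigma|_j$ directly in terms of the $r$'s. Since $r_{k-1}<v\leq r_k$ for every admissible $v$, the rank-$(k-1)$ entry of $\sigma|_{[1,\bl]}$ is $\sigma|_i=r_{k-1}$ (unshifted, as $r_{k-1}<v$) and the rank-$(k+1)$ entry is $\sigma|_j=r_k+1$ (shifted, as $r_k\geq v$); notably both values are independent of the particular admissible $v$, so the statement of the lemma makes sense even when $\pi$ is ambiguous. The three cases then drop out by direct substitution: $\sigma|_j-\sigma|_i=r_k-r_{k-1}+1=2$ iff $r_k=r_{k-1}+1$, and analogous specialisations when $k=1$ or $k=\bl$ yield the boundary characterisations. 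The main obstacle is simply the careful bookkeeping of the shift $x\mapsto x+1$ on values $\geq v$, together with the boundary conventions for $r_0$ and $r_\bl$, so that all three sub-cases emerge uniformly from a single counting argument.
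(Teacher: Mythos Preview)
Your argument is correct and in fact sharper than the paper's: you compute $|\peofpa(\pi)|$ exactly as $r_k-r_{k-1}$, whereas the paper treats the two implications separately. For $(\Rightarrow)$ the paper argues by contrapositive that if $\sigma|_j-\sigma|_i>2$ then the value-neighbour $\sigma|_1\pm 1$ occurs beyond position $\bl$, and swapping it with $\sigma|_1$ yields a second element of $\peofpa(\pi)$; for $(\Leftarrow)$ it observes that when $\sigma|_j-\sigma|_i=2$ the entries at positions $i,j$ become value-adjacent after deleting $\sigma|_1$, leaving exactly one slot for the reinserted first entry. Your parametrisation by $v$ unifies both directions into a single count and, as a bonus, shows that $\sigma|_i$ and $\sigma|_j$ are independent of the choice of $\sigma\in\peofpa(\pi)$, a point the paper leaves implicit.

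One issue, however: in the boundary case $k=\bl$ your own computation gives $\sigma|_i=r_{\bl-1}$ and unambiguity iff $r_{\bl-1}=m+\bl-2$, i.e.\ $\sigma|_i=|\sigma|-1$, \emph{not} $\sigma|_i=\bl-1$ as printed. The printed condition is in fact a typo (take $\bl=2$, $\pi=21\to12$: then $\peofpa(\pi)=\{213,312\}$ is ambiguous, yet $\sigma|_2=1=\bl-1$ for both), and your version is what the informal gloss after the lemma and the subsequent Lemma~\ref{lem:bound1} actually require. You should flag the discrepancy rather than assert that the specialisation recovers the stated condition. Separately, it would strengthen the exposition to say explicitly why the rank of $v$ is the \emph{only} constraint coming from $\perm(\sigma|_{[1,\bl]})=\gamma_1$: the relative order of $\sigma|_2,\dots,\sigma|_\bl$ already matches $\gamma_1|_{[2,\bl]}$ because of the edge $\gamma_1\to\gamma_2$ together with $\perm(\rho|_{[1,\bl]})=\gamma_2$.
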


Informally, $\pi$ is unambiguous if and only if in $\sigma$ any neighbours by value of its initial entry are contained
in the prefix of length $\bl$.

\begin{proof}[Proof of Lemma \ref{lem:lins}]
We give a detailed proof for the generic case $1<\gamma_1|_1<\bl$.
For the `boundary cases' $\gamma_1|_1=1$ and $\gamma_1|_1=\bl$ the arguments are analogous.

($\Rightarrow$)
We prove the contrapositive. Suppose that $\sigma|_j-\sigma|_i>2$.
Then $\sigma|_j-\sigma|_1>1$ or $\sigma|_1-\sigma|_i>1$ (or both).
Suppose $\sigma|_j-\sigma|_1>1$; the other case is analogous.
Let $l$ be the unique subscript satisfying $\sigma|_l=\sigma|_1+1$.
Since $\gamma_1|_j-\gamma_1|_i=2$ we must have $l>\bl$; in other words, $\sigma|_l$ must occur outside the initial segment of $\sigma$ corresponding to $\gamma_1$.
Let $\rho$ be the permutation obtained from $\sigma$ by swapping $\sigma|_1$ and $\sigma|_l$.
By the preceding analysis the $\bl$-factors of $\rho$ and $\sigma$ coincide.
Hence $\paofpe(\rho)=\paofpe(\sigma)=\pi$, and hence $\pi$ is ambiguous.

($\Leftarrow$)
Suppose that $\sigma|_j-\sigma|_i=2$.
Let $\rho\in\peofpa(\pi)$ be arbitrary.
Since the path $\gamma_2\rightarrow\dots\rightarrow \gamma_m$ is unambiguous
we have
$\perm(\sigma|_{[2,m+\bl-1]})=\perm(\rho|_{[2,m+\bl-1]})$; let us call this permutation $\sigma^\prime$.
Since $\sigma^\prime$ is obtained from $\sigma$ by removing $\sigma|_1$ and subtracting $1$ from all values larger than $\sigma|_1$,
the entries $\sigma^\prime|_j$ and $\sigma^\prime|_i$ are adjacent in value.
Yet, in $\rho$ we have $\rho|_i<\rho|_1<\rho|_j$, as prescribed by the first factor $\gamma_1$.
It follows that $\rho$ is obtained from $\sigma^\prime$ by keeping all the entries of value $\leq \sigma^\prime|_i$ unchanged,
increasing by $1$ all the entries of value $\geq \sigma^\prime|_j$, and prepending a new initial entry $\rho|_1=\sigma^\prime|_i+1=\sigma|_1$.
In other words, we have $\rho=\sigma$, and hence $\pi$ is unambiguous.
\end{proof}

\begin{lemma}
\label{lem:ambdec}
It is decidable whether a given path in the graph $\Gamma(C)$ is ambiguous.
\end{lemma}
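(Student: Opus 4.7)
The plan is to decide whether a given path $\pi = \gamma_1 \rightarrow \gamma_2 \rightarrow \dots \rightarrow \gamma_m$ in $\Gamma(C)$ is ambiguous by processing it from right to left, prepending one vertex at a time and applying Lemma \ref{lem:lins} at each step. The starting point is the observation that the trivial one-vertex path $(\gamma_m)$ satisfies $\peofpa((\gamma_m)) = \{\gamma_m\}$ and hence is unambiguous, with witness $\gamma_m$.

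For the inductive step, suppose that for some $k \in \{1,\dots,m-1\}$ we have already established that the suffix $\pi_{k+1} := \gamma_{k+1} \rightarrow \dots \rightarrow \gamma_m$ is unambiguous and computed its unique witness $\sigma_{k+1} \in \peofpa(\pi_{k+1})$. We construct some $\sigma \in \peofpa(\gamma_k \rightarrow \pi_{k+1})$ by prepending to a real-number realisation of $\sigma_{k+1}$ a new value whose rank among the new first $\bl$ entries equals $\gamma_k|_1$; such a value exists because the edge $\gamma_k \rightarrow \gamma_{k+1}$ guarantees that the first $\bl-1$ entries of $\sigma_{k+1}$ are order-isomorphic to $\gamma_k|_{[2,\bl]}$. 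We then apply Lemma \ref{lem:lins} (with its $\gamma_1$ playing the role of our current $\gamma_k$) to this $\sigma$: the criterion only requires reading off the positions $i, j$ within $\gamma_k$ where $\gamma_k|_1 \pm 1$ appear and checking a numerical condition on $\sigma|_i, \sigma|_j$, all of which is effective.

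If the condition fails, the extended path $\gamma_k \rightarrow \pi_{k+1}$ is ambiguous; being a subpath of $\pi$, this forces $\pi$ itself to be ambiguous by the contrapositive of Lemma \ref{lem:ambsub}, so we halt and report ambiguity. Otherwise the extended path is unambiguous, in which case $\sigma$ is automatically its unique witness, and we set $\sigma_k := \sigma$ and continue. After at most $m-1$ iterations the procedure terminates and correctly decides whether $\pi$ is ambiguous.

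The conceptual ingredients are just Lemmas \ref{lem:ambsub} and \ref{lem:lins}; the only point requiring a little care is the update of the witness $\sigma_{k+1} \mapsto \sigma_k$, which is nevertheless straightforward bookkeeping on finite sequences of real numbers of length at most $m+\bl$. No serious obstacle is anticipated.
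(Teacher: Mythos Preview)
Your proposal is correct and follows essentially the same approach as the paper: process the path from right to left, starting with the unambiguous trivial path $(\gamma_m)$, and at each step apply Lemma~\ref{lem:lins} to test whether prepending the next vertex preserves unambiguity, invoking Lemma~\ref{lem:ambsub} to conclude ambiguity of $\pi$ the moment a suffix becomes ambiguous. Your version is slightly more explicit about maintaining and updating the witness permutation $\sigma_k$, which the paper leaves implicit, but the underlying algorithm is identical.
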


\begin{proof}
Let $\pi=\gamma_1\rightarrow\dots\rightarrow \gamma_m$.
We proceed recursively, backwards.
The trivial path $(\gamma_m)$ is unambiguous.
Then we use Lemma \ref{lem:lins} to determine whether the subpath $\gamma_{m-1}\rightarrow\gamma_m$ is ambiguous.
If it is, then $\pi$ is ambiguous by Lemma \ref{lem:ambsub}.
Otherwise, we again use Lemma \ref{lem:lins} on the subpath $\gamma_{m-2}\rightarrow\gamma_{m-1}\rightarrow\gamma_m$, and so on.
\end{proof}

\begin{lemma}
\label{lem:bound1}
Let $\pi=\gamma_1\rightarrow\dots\rightarrow\gamma_m$ be an unambiguous path in $\Gamma(C)$, and let 
$\sigma=\peofpa(\pi)$.
If $i\in [m+\bl-1]$ is such that $\sigma|_i-\sigma|_1=\pm 1$ then $i\leq \bl$.
\end{lemma}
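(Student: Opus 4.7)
The plan is to derive Lemma \ref{lem:bound1} as a direct consequence of Lemmas \ref{lem:ambsub} and \ref{lem:lins}; all the real work has already been done in establishing the previous lemma. The informal remark immediately following Lemma \ref{lem:lins} already foreshadows exactly this conclusion, and what remains is to promote the statement from ``positions in the initial length-$\bl$ prefix'' to ``positions in the whole permutation'', together with handling the two boundary cases.

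First I would observe that, by Lemma \ref{lem:ambsub}, since $\pi$ is unambiguous so is every subpath of $\pi$, and in particular the suffix $\gamma_2\rightarrow\dots\rightarrow\gamma_m$ is unambiguous. This is exactly the hypothesis needed to invoke Lemma \ref{lem:lins} on $\pi$ itself. Note also that $\gamma_1=\perm(\sigma|_{[1,\bl]})$, so $\gamma_1|_1$ is precisely the rank of $\sigma|_1$ among the first $\bl$ entries of $\sigma$.

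Next I would split into the three cases of Lemma \ref{lem:lins} and in each one translate the local condition (which refers to ranks within $\gamma_1$) into a global statement about values in $\sigma$. In the generic case $1<\gamma_1|_1<\bl$, the lemma gives positions $i,j\in[\bl]$ with $\gamma_1|_i=\gamma_1|_1-1$, $\gamma_1|_j=\gamma_1|_1+1$, and forces $\sigma|_j-\sigma|_i=2$; combined with $\sigma|_i<\sigma|_1<\sigma|_j$ inside $\sigma|_{[1,\bl]}$ this is only possible if $\sigma|_i=\sigma|_1-1$ and $\sigma|_j=\sigma|_1+1$ as actual values in $[m+\bl-1]$. In the case $\gamma_1|_1=1$, the entry $\sigma|_1$ is the smallest among the first $\bl$ entries and the condition $\sigma|_j=2$ forces $\sigma|_1=1$, so the value $\sigma|_1+1$ appears at position $j\in[\bl]$ and the value $\sigma|_1-1$ does not exist. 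The case $\gamma_1|_1=\bl$ is dual.

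Finally, because in a permutation each value occurs at exactly one position, the positions $i,j$ produced by Lemma \ref{lem:lins} are the unique positions in $\sigma$ at which the values $\sigma|_1-1$ and $\sigma|_1+1$ appear (whenever those values lie in $[m+\bl-1]$). Hence any $i\in[m+\bl-1]$ with $\sigma|_i-\sigma|_1=\pm 1$ must lie in $[\bl]$, giving the lemma. The only mild obstacle is tidy bookkeeping of the three cases of Lemma \ref{lem:lins} and the translation between local ranks in $\gamma_1$ and global values in $\sigma$; no new combinatorial idea is required.
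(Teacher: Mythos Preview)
Your proposal is correct and follows essentially the same approach as the paper, which simply states that the lemma follows directly from Lemma~\ref{lem:lins}. Your write-up supplies the routine case analysis and the translation between ranks in $\gamma_1$ and global values in $\sigma$ that the paper leaves implicit.
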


\begin{proof}
This follows directly from Lemma \ref{lem:lins}.
\end{proof}

We are now in the position to establish a bound on the length of certain types of ambiguous paths:

\begin{lemma}
\label{lem:bound}
If $\pi=\gamma_1\rightarrow\dots\rightarrow\gamma_m$ is an ambiguous path in $\Gamma(C)$, 
such that the subpaths $\pi^\prime=\gamma_1\rightarrow\dots\rightarrow\gamma_{m-1}$
and $\pi^{\prime\prime}=\gamma_2\rightarrow\dots\rightarrow\gamma_m$ are unambiguous, then
$m\leq \bl$.
\end{lemma}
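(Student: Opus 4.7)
The plan is to pick two distinct permutations in $\peofpa(\pi)$, use the unambiguity of $\pi'$ and $\pi''$ to pin down exactly how they can differ, and then apply Lemma~\ref{lem:bound1} at both ends of $\pi$ to force a single position of $\sigma$ to lie simultaneously in $[\bl]$ and in $[m,m+\bl-1]$.

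First I would pick $\sigma\neq\sigma'$ in $\peofpa(\pi)$, which exist by ambiguity of $\pi$. Since $\paofpe(\perm(\sigma|_{[2,m+\bl-1]}))=\pi''$ and similarly for $\sigma'$, and since $\pi''$ is unambiguous, these two restrictions must have identical patterns; the analogous statement holds on $[1,m+\bl-2]$ from the unambiguity of $\pi'$. Consequently every pairwise comparison between entries of $\sigma$ other than the one between positions $1$ and $m+\bl-1$ is forced, and this is the comparison where $\sigma$ and $\sigma'$ disagree. A short counting argument, summing the ranks contributed by the middle entries and the ``other'' end entry, then shows $\sigma|_1$ and $\sigma|_{m+\bl-1}$ must be consecutive integers; relabelling, I may assume $\sigma|_1=x$ and $\sigma|_{m+\bl-1}=x+1$.

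Now $m\geq 2$ (single-vertex paths are always unambiguous) and $\bl\geq 2$ (otherwise $C$ is empty and the statement is vacuous), so $|\sigma|=m+\bl-1\geq 3$; hence at least one of $x-1$, $x+2$ occurs as an entry of $\sigma$. By the obvious left--right symmetry of what follows, assume $x-1$ occurs, at position $i$. Inside $\sigma|_{[1,m+\bl-2]}$ the value $x+1$ is absent, so $x-1$ and $x=\sigma|_1$ remain adjacent in value there; passing to $\peofpa(\pi')=\perm(\sigma|_{[1,m+\bl-2]})$, Lemma~\ref{lem:bound1} applied to $\pi'$ yields $i\leq\bl$. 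Inside $\sigma|_{[2,m+\bl-1]}$ the value $x$ is absent, so $x-1$ and $x+1=\sigma|_{m+\bl-1}$ remain adjacent in value there; applying to $\pi''$ the mirror of Lemma~\ref{lem:bound1} (proved by the same argument as Lemma~\ref{lem:lins} but with an edge \emph{appended} to rather than prepended to an unambiguous path) gives $i\geq m$. Combining, $m\leq i\leq\bl$, as required.

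The main obstacle I anticipate is the counting claim in the second paragraph: one must verify that fixing the two patterns $\perm(\sigma|_{[1,m+\bl-2]})$ and $\perm(\sigma|_{[2,m+\bl-1]})$ pins down all of $\sigma$ except at the two end positions, and moreover forces those two free entries to be consecutive integers. The rest is bookkeeping plus the single appeal to the symmetric form of Lemma~\ref{lem:bound1}, which is routine.
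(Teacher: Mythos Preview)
Your proposal is correct and follows essentially the same route as the paper: deduce that $\sigma|_1$ and $\sigma|_{m+\bl-1}$ are adjacent in value from the unambiguity of $\pi'$ and $\pi''$, locate a third entry adjacent in value to one of them, and then apply Lemma~\ref{lem:bound1} at each end to sandwich its position between $m$ and $\bl$. The ``counting claim'' you flag as the main obstacle is exactly what the paper asserts in one line (``the entries $\sigma|_1$ and $\sigma|_{m+\bl-1}$ must be adjacent in value''); your justification via the forced pairwise comparisons is sound, and the remaining symmetry appeals match the paper's two without-loss-of-generality reductions.
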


\begin{proof}
The lemma is vacuously true when $\bl=1$ or $m=1$: in the former case $\Gamma(C)$ is empty, and in the latter
the empty path is unambiguous. So suppose that $\bl,m\geq 2$, and note that this implies $|\sigma|\geq 3$.
Because of the (un)ambiguity assumptions the entries $\sigma|_1$ and $\sigma|_{m+\bl-1}$ must be adjacent in value.
Assume without loss of generality that $\sigma|_1=\sigma|_{m+\bl-1}-1$. 
Since $|\sigma|\geq 3$ we cannot have both $\sigma|_1=1$ and $\sigma|_{m+\bl-1}=m+\bl-1$;
without loss of generality assume $\sigma|_1>1$.
Let $j$ be such that $\sigma|_j=\sigma|_1-1$; we must have
$1<j<m+\bl-1$.
Since $\pi^\prime$ is unambiguous and
$\perm(\sigma|_{[m+\bl-2]})=\peofpa(\pi^\prime)$, the $j$th entry in $\perm(\sigma|_{[m+\bl-2]})$, i.e. the entry that corresponds to $\sigma|_j$,
must be within the first $\bl$ entries by Lemma \ref{lem:bound1}; in other words $j\leq \bl$.
By symmetry, considering the unambiguous path $\pi^{\prime\prime}$ and
the permutation $\perm(\sigma|_{[2,m+\bl-1]})$ we have $j\geq m$.
Combining the two inequalities yields $m\leq \bl$, completing the proof.
\end{proof}

Let us call an ambiguous path $\pi$ \emph{minimal} if all its proper subpaths are unambiguous.
Clearly, every ambiguous path contains a minimal ambiguous subpath. 
The previous lemma provides an effective bound on the length of ambiguous paths.
Since there are only finitely many paths of bounded length, and for each of them it can be checked whether it is ambiguous by Lemma \ref{lem:ambdec}, we conclude:

\begin{lemma}
\label{lem:ambdec1}
It is decidable whether $\Gamma(C)$ has ambiguous paths. \qed
\end{lemma}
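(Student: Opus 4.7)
The plan is to assemble this as a short consequence of the preceding technical lemmas. The key observation is that every ambiguous path in $\Gamma(C)$ contains a \emph{minimal} ambiguous subpath (in the sense defined just before Lemma \ref{lem:ambdec1}): starting from any ambiguous $\pi$, successively delete end vertices whenever the resulting subpath is still ambiguous, which must terminate — if not at a shorter ambiguous path, then at a path whose every proper subpath is unambiguous. Existence of such a minimal witness is thus equivalent to existence of any ambiguous path at all.

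Next I would invoke Lemma \ref{lem:bound}: a minimal ambiguous path has length at most $\bl$. Combined with the fact that $\Gamma(C)$ has finitely many vertices (being an induced subgraph of $\Gamma_\bl(S)$ on the finite set $C_\bl$), this means there are only finitely many candidate paths to inspect, and we can enumerate them all effectively.

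Finally, for each such candidate of length at most $\bl$, Lemma \ref{lem:ambdec} allows us to decide algorithmically whether it is ambiguous. The algorithm is therefore: compute $\Gamma(C)$ from the finite basis of $C$, enumerate all paths of length at most $\bl$, and for each run the decision procedure of Lemma \ref{lem:ambdec}; declare ``ambiguous paths exist'' iff at least one of these is ambiguous. Correctness in one direction is immediate (any ambiguous path found is a witness); in the other direction it follows from the reduction to minimal ambiguous paths together with Lemma \ref{lem:bound}.

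There is no real obstacle here: all the substance has been packed into Lemmas \ref{lem:ambsub}, \ref{lem:ambdec} and \ref{lem:bound}, and Lemma \ref{lem:ambdec1} is essentially a bookkeeping corollary assembling a bounded search space with a per-path decision procedure. The only point that needs a sentence of care is the reduction to minimal ambiguous paths, but this is immediate from Lemma \ref{lem:ambsub}, which guarantees that shrinking stops at a minimal ambiguous witness rather than accidentally destroying ambiguity en route.
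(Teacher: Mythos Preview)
Your proposal is correct and follows essentially the same approach as the paper: reduce to minimal ambiguous paths, bound their length by Lemma~\ref{lem:bound}, enumerate the finitely many candidates, and test each via Lemma~\ref{lem:ambdec}. One small remark: the existence of a minimal ambiguous subpath follows simply from well-foundedness of path length and does not actually require Lemma~\ref{lem:ambsub}, so your final sentence invokes more than is needed, but this does not affect correctness.
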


\begin{proof}[Proof of Theorem \ref{thm:perms} \ref{it:perms-ato}]
The factor graph $\Gamma(C)$ is computable from $B$.
The set $C$ is finite if and only if $\Gamma(C)$ is acyclic, in which case we can enumerate all the elements of $C$ and check directly whether $C$ satisfies the join property.
So we suppose that $C$ is infinite and check whether $\Gamma(C)$ is strongly connected, in which case $C$ is atomic by Theorem \ref{thm:permatom}. 
Otherwise we check whether $\Gamma(C)$ is a bicycle. If it is not then $C$ is not atomic again by
Theorem \ref{thm:permatom}.
If $\Gamma(C)$ is a bicycle we check whether it contains any ambiguous paths; this is decidable by
Lemma \ref{lem:ambdec1}. 
If it does $C$ is not atomic, while if it does not $C$ is atomic, both by
Theorem \ref{thm:permatom}, completing the proof. 
\end{proof}

\begin{example}
The set $C=\Av(123,321)$ introduced in Example \ref{exa:123321} has the factor graph shown in Figure
\ref{fig:123321}. This graph is strongly connected and therefore $C$ is atomic.
\end{example}

\begin{example}
\label{ex:ttaa}
Let $B := \{231, 312, 321, 1243, 3142\}$ and $C := \Av(B)$.
We have $\bl=\max\{|\beta|\::\: \beta\in B\}= 4$ and $C_4 = \{1234, 1324, 2134, 2143\}$.
The
graph $G(C)$ is shown in Figure \ref{fig:g2143}; it is a bicycle.
By Lemma \ref{lem:bound}, any minimal ambiguous path in $G(C)$ has length at most $4$.
One can list all the paths of length $4$ and then check that they are all unambiguous, either by direct inspection or by applying the algorithm from the proof of Lemma \ref{lem:ambdec}.
Below we give all the paths and their corresponding permutations;
for ease of legibility we use $\alpha := 2143$, $\beta := 1324$, $\gamma := 2134$ and $\delta := 1234$:

\begin{center}
\begin{tabular}{ c | c }
$\pi$ & $\peofpa(\pi)$ \\
\hline
$\alpha\rightarrow\beta\rightarrow \alpha\rightarrow \beta$ & $2143657$ \\
$\alpha\rightarrow \beta\rightarrow \gamma\rightarrow \delta$ & $2143567$ \\
$\beta\rightarrow\alpha\rightarrow \beta\rightarrow \alpha$ & $1325476$ \\
$\beta\rightarrow \alpha\rightarrow \beta\rightarrow \gamma$ & $1324657$ \\
$\beta\rightarrow \gamma\rightarrow \delta\rightarrow \delta$ & $1324567$ \\
$\gamma\rightarrow \delta\rightarrow \delta\rightarrow \delta$ & $2134567$ \\
$\delta\rightarrow \delta\rightarrow \delta\rightarrow\delta$ & $1234567$ 
\end{tabular}
\end{center}
Furthermore, every permutation in the set $C_{<4} = \{1, 12, 21, 123, 213\}$ is a factor of $2134$.
Hence $C$ is atomic by Theorem \ref{thm:permatom}.
\end{example}

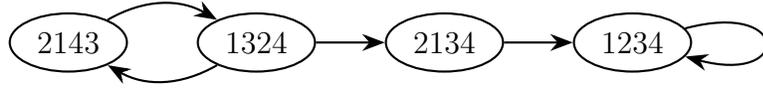
\begin{figure}
\begin{center}
\begin{tikzpicture}[> =  {Stealth [scale=1.3]}, thick]
\tikzstyle{every state}=[thick]
\node[state, shape = ellipse, minimum size = 15pt] (A) at (0,0) {2143};
\node[state, shape = ellipse, minimum size = 15pt] (B) at (2.5,0) {1324};
\node[state, shape = ellipse, minimum size = 15pt] (C) at (5,0) {2134};
\node[state, shape = ellipse, minimum size = 15pt] (D) at (7.5,0) {1234};
\path[->]
(A) edge [bend left] node {} (B)
(B) edge [bend left] node {} (A)
(B) edge node {} (C)
(C) edge node {} (D)
(D) edge [loop right, looseness = 10] node {} (D)
;
\end{tikzpicture}

\caption{The graph $\Gamma(\Av(231,312,321,1243,3142))$.}
\label{fig:g2143}

\end{center}
\end{figure}

\begin{example}
\label{exa:B321213x}
Let $B:=\{ 132,213,231,321\}$ and $C:=\Av(B)$.
This time we have $\bl=3$, $C_3=\{312,123\}$, and $\Gamma(C)$ is shown in Figure \ref{fig:123132etc}.
This graph is a bicycle, but the path $\pi:=312 \rightarrow 123$ is ambiguous, with
$\peofpa(\pi)=\{4123,3124\}$. Therefore $C$ is not atomic. The permutations $4123,3124$ do not join in $C$.
\end{example}

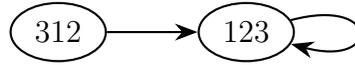
\begin{figure}
\begin{center}
\begin{tikzpicture}[> = {Stealth [scale=1.3]}, thick]
\tikzstyle{every state}=[thick]
\node[state, shape = ellipse, minimum size = 15pt] (1) at (0,0) {312};
\node[state, shape = ellipse, minimum size = 15pt] (2) at (2.5,0) {123};
\path[->]
(2) edge [loop right, looseness = 10] node {} (2)
(1) edge node {} (2)
;
\end{tikzpicture}

\caption{The graph $\Gamma(\Av(132,213,231,321))$.}
\label{fig:123132etc}
\end{center}

\end{figure}

\section{Contiguous subpermutation ordering: wqo}
\label{sec:perms-wqo}

In this section we turn to our final decidability result: wqo for permutations.
Following the pattern from the previous section,
the result will build on its counterpart for paths in graphs (Theorem \ref{thm:grwqo}), and will be similar to the corresponding result
for words (Theorem \ref{thm:f-wqo}), except for the effects of ambiguity.
However, this time these effects will be more subtle than in Section \ref{sec:perms-ato}.
It will turn out that, for a bicycle $\mathcal{B}$, ambiguous paths wholly contained within a cycle will be a definite obstacle to wqo, while other ambiguous paths will be such an obstacle only in certain circumstances.

The section is organised as follows:
\begin{txtitemize}
\item
First we show how to reduce the wqo question for an arbitrary finitely based class to the same question for the classes associated with bicycles with no ambiguous cycles; Lemma \ref{la:wqonec} and the discussion after it.
\item
Then we undertake a fine structural analysis of the two cycles in a bicycle; we classify  pairs of consecutive points in the associated permutations into increasing, decreasing, expanding and shrinking.
\item
Then we consider circumstances under which the rest of the bicycle allows points to be inserted into a shrinking pair; we prove that this constellation, called a splittable pair, represents yet another obstacle for wqo; Lemma \ref{la:splitac}.
\item
Finally, we present a technical argument which demonstrates 
that these conditions taken together are sufficient for a class associated with a bicycle to be wqo; Lemma \ref{la:wqob}.
\end{txtitemize}

Throughout $C$ will denote a fixed finitely based downward closed class of permutations, and $\bl$ will stand for the maximum length of a basis element for $C$.

So, as outlined above, we begin by presenting two conditions necessary for wqo:

\begin{lemma}
\label{la:wqonec}
If $C$ is wqo then the following hold:
\begin{thmenumerate}
\item
\label{it:wqonec1}
$\Gamma(C)$ has a path-complete decomposition into bicycles; and
\item
\label{it:wqonec2}
$\Gamma(C)$ has no ambiguous cycles.
\end{thmenumerate}
\end{lemma}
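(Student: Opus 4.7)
The plan is to prove the contrapositive of each part by constructing an infinite antichain in $C$ when the corresponding condition fails. For part (i), suppose $\Gamma(C)$ has no path-complete decomposition into bicycles. By Theorem \ref{thm:grwqo}, $\Path(\Gamma(C))$ is not wqo, so it contains an infinite antichain $(\pi_n)_{n\ge 1}$. Pick any $\mu_n\in\peofpa(\pi_n)\subseteq C$. If $\mu_i\le_f\mu_j$ for some $i\ne j$, then properties \ref{it:sp2} and \ref{it:sp1} give $\pi_i=\paofpe(\mu_i)\le_p\paofpe(\mu_j)=\pi_j$, contradicting the antichain property of $(\pi_n)$. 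Hence $(\mu_n)$ is an infinite antichain in $C$, so $C$ is not wqo.

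For part (ii), suppose $\pi$ is an ambiguous cycle in $\Gamma(C)$. By Lemma \ref{lem:ambsub} any power $\pi^s$ is also an ambiguous cycle, so after replacing $\pi$ by a suitable power we may assume $|\pi|\ge\bl$. Fix distinct $\sigma^0,\sigma^1\in\peofpa(\pi)$ and set $k=|\pi|$. For each $n\ge 2$, apply Lemma \ref{lem:2sub2} to the path $\pi^{2n+1}$, decomposed as $\pi_1\tau_1\pi_2\tau_2\cdots\pi_n\tau_n\pi_{n+1}$ with each $\tau_i=\pi$ (the ambiguous segments) and each $\pi_i=\pi$ (the spacers, of length $k\ge\bl$), choosing $\sigma^{b_i}$ at $\tau_i$ with signature $(b_1,\ldots,b_n)=(1,0,\ldots,0,1)$. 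This produces $\mu_n\in\peofpa(\pi^{2n+1})$ carrying $\sigma^1$ as local pattern at exactly the first and last of its $n$ ambiguous cycle copies. To verify $(\mu_n)_{n\ge 2}$ is an antichain, suppose $\mu_m\le_f\mu_n$ with $2\le m<n$; by \ref{it:sp2}, $\pi^{2m+1}$ embeds as a subpath of $\pi^{2n+1}$, and because $\pi$ is a cycle, this embedding must start at permutation-position $q=jk$ for some integer $j\in\{0,\ldots,2(n-m)\}$. Tracking the two signature $\sigma^1$'s of $\mu_m$ (at its first and $m$th ambiguous copies) then forces alignment with $\mu_n$'s two signature $\sigma^1$'s (at its first and $n$th ambiguous copies): matching the leading signature requires $j=0$, while matching the trailing signature requires $j=2(n-m)$, and both together force $m=n$, a contradiction.

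The main obstacle is controlling the \emph{spacer} cycle copies of $\mu_n$: their local patterns are determined by the Lemma \ref{lem:2sub2} construction rather than being freely chosen, so the alignment analysis above must also rule out the ``odd-$j$'' case, in which $\mu_m$'s ambiguous $\sigma^1$ signatures would have to align with spacer cycle copies of $\mu_n$ whose local patterns might coincidentally equal $\sigma^1$. Handling this cleanly requires either exploiting the residual freedom in the construction of Lemma \ref{lem:2sub2} to arrange the spacer segments of $\mu_n$ to avoid the pattern $\sigma^1$, or refining the signature by drawing on three or more elements of $\peofpa(\pi)$ when available, or widening the spacers by a further pass to $\pi^s$ so that the rigidity of the signature overwhelms any coincidental spacer behaviour.
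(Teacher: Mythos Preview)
Your argument for part (i) is correct and essentially identical to the paper's.

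For part (ii), your approach is genuinely different from the paper's, and as you yourself recognise, it is incomplete. The obstacle you flag with the spacer segments is real: Lemma~\ref{lem:2sub2} gives no control over the $\rho_i$, nor over the length-$(k+\bl)$ local patterns that straddle a spacer, so in the odd-shift case your marker $\sigma^1$ could coincidentally appear there and the alignment argument breaks down. Your suggested remedies (forcing the spacers, enlarging the signature alphabet, further widening) are all plausible directions, but none is actually carried out, so the proof as written has a gap. There is also a smaller issue you do not mention: the claim that any subpath embedding of $\pi^{2m+1}$ into $\pi^{2n+1}$ starts at a multiple of $k$ assumes the vertex sequence of $\pi$ has no shorter period, which need not hold after you have replaced $\pi$ by a power.

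The paper circumvents all of this with a short change of viewpoint. Let $l$ be the length of a minimal ambiguous subpath of the cycle $\pi=\gamma_1\to\dots\to\gamma_t\to\gamma_1$, and for each $i$ pick $\delta_i\in\peofpa(\gamma_i\to\dots\to\gamma_{i+l})$ (indices modulo $t$). These $\delta_i$ are vertices of the higher-dimensional factor graph $\Gamma_{\bl+l}(C)$, and $\delta_1\to\dots\to\delta_t\to\delta_1$ is a cycle there. Ambiguity of the subpath starting at $\gamma_1$ supplies a second $\delta_1'\neq\delta_1$, and the extra edges $\delta_t\to\delta_1'$ and $\delta_1'\to\delta_2$ make this an in-out cycle. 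Theorem~\ref{thm:grwqo} then gives an infinite antichain in $\Path(\Gamma_{\bl+l}(C))$, which transfers to $C$ via \ref{it:sp2} exactly as in part (i). No alignment bookkeeping is needed.
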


\begin{proof}
We prove the contrapositive.
Suppose first that \ref{it:wqonec1} does not hold. Then $\Path(\Gamma(C))$ is not wqo by Theorem \ref{thm:grwqo}.
Taking an infinite antichain $\{ \pi_i\::\: i=1,2,\dots\}$ in $\Gamma(C)$ and arbitrary
$\sigma_i\in \peofpa(\pi_i)$ yields an antichain in $C$ by \ref{it:sp2}.

Suppose now that \ref{it:wqonec2} does not hold. Let 
$\pi=\gamma_1\rightarrow\dots\rightarrow\gamma_t\rightarrow\gamma_1$ be an ambiguous cycle.
Let $l$ be the minimum length of an ambiguous subpath of $\pi$.

For every $i\in [t]$ consider the path $\pi_i=\gamma_i\rightarrow\dots\rightarrow\gamma_{i+l}$ with 
the convention that $\gamma_j=\gamma_{j-t}$ for $j>t$, and let $\delta_i\in\peofpa(\pi_i)$ be arbitrary.
Without loss of generality assume that $\pi_1$ is ambiguous. 

Note that for every $i$ we have
\[
\paofpe(\perm(\delta_i|_{[2,\bl+l]}))=\paofpe(\perm(\delta_{i+1}|_{[1,\bl+l-1]}))=
\gamma_{i+1}\rightarrow\dots\rightarrow\gamma_{i+l}.
\]
It follows that $\delta_i\rightarrow\delta_{i+1}$ is an edge in the $(\bl+l)$-dimensional factor graph $\Gamma_{\bl+l}(C)$, and so
$\xi:=\delta_1\rightarrow\dots\rightarrow\delta_t\rightarrow \delta_1$ is a cycle.

Since $\pi_1$ is ambiguous, there exists a permutation $\delta_1^\prime\in\peofpa(\pi_1)\setminus\{\delta_1\}$.
But then $\delta_1^\prime\rightarrow\delta_2$ and $\delta_t\rightarrow\delta_1^\prime$ are edges in
$\Gamma_{\bl+l}(C)$, and so $\xi$ is an in-out cycle.
It follows from Theorem \ref{thm:grwqo} that $\Path(\Gamma_{\bl+l}(C))$ is not wqo, and hence again $C$ itself is not wqo using \ref{it:sp2}.
\end{proof}

Suppose now that $\Gamma(C)$ has a path-complete decomposition into bicycles $\mathcal{B}_1,\dots,\mathcal{B}_n$.
Note that from
\[
\Path(\Gamma(C))=\bigcup_{i=1}^n\Path(\mathcal{B}_i)
\]
it follows that
\begin{equation}
\label{eq:fu}
C=C_{[\bl)}\cup \peofpa(\Path(\Gamma(C)))
=C_{[\bl)}\cup\bigcup_{i=1}^n \peofpa(\Path(\mathcal{B}_i)).
\end{equation}
Since this is a finite union, and the set $C_{[\bl)}$ is finite, we have that $C$ is wqo if and only if all 
$\peofpa(\Path(\mathcal{B}_i))$ are wqo.

So now we are dealing with the question: 
{\it for a fixed bicycle $\mathcal{B}=\mathcal{B}_i$, with no ambiguous cycles, when is 
$C_\mathcal{B}:=\peofpa(\Path(\mathcal{B}))$ wqo?}

To motivate the subsequent development, let us note that if $\mathcal{B}$ has no ambiguous paths whatsoever, then
$C_\mathcal{B}$ is certainly wqo.
Indeed, in the absence of ambiguity $\paofpe$ and $\peofpa$ are
mutually inverse, order preserving mappings between $C_\mathcal{B}$ and $\Path(\mathcal{B})$ by
\ref{it:sp1}--\ref{it:sp3}, and
$\Path(\mathcal{B})$ is wqo by Theorem \ref{thm:grwqo}.

On the other hand, it is possible for $\mathcal{B}$ to have ambiguous paths, while $C_\mathcal{B}$ remains wqo.
Indeed, for an easy example, take $\mathcal{B}$ to be any ambiguous path, and leave out the cycles.
Then $C_\mathcal{B}$ is finite, and hence wqo. This construction can easily be modified by the addition of an appropriate cycle at one or both ends of the path, to yield infinite wqo examples as well.
Finally, we note that it is possible for $\mathcal{B}$ to have no ambiguous cycles, and yet for $C_\mathcal{B}$ to be non-wqo, as the following example shows:

\begin{example}
\label{exa:B321213}
Let $\mathcal{B}$ the bicycle shown in Figure \ref{fig:123132etc} introduced in Example \ref{exa:B321213x}.
Even though $\mathcal{B}$ has ambiguous paths, they do not occur on the cycle.
On the other hand, $C_\mathcal{B}$ contains the infinite antichain
$\{(n-1,1,2,\dots,n-2,n)\::\: n\geq 4\}$.
\end{example}

We now proceed to undertake a more detailed structural analysis of a bicycle and the associated set of permutations.
We first fix some notation and terminology.

Our bicycle has an initial cycle, a connecting path, and a terminal cycle. One or both of the cycles may be trivial. However, in what follows we will assume that they are non-trivial.
The case where one of them is trivial is treated analogously.

Suppose the initial cycle is $\pi=v_1\rightarrow\dots\rightarrow v_r\rightarrow v_1$, where $v_1$ is the exit point onto the connecting path.
Let $s$ be the smallest such that $rs\geq \bl$. The concatenation
$\pi^s=v_1^\prime\rightarrow v_2^\prime\rightarrow\dots\rightarrow v_{rs}^\prime\rightarrow v_1^\prime$ is called the 
\emph{augmented initial cycle}, and the permutation
\[
\alpha:=\peofpa(v_1^\prime\rightarrow\dots\rightarrow v_{rs-\bl+1}^\prime)
\]
is called the \emph{initial iterator}.
It is uniquely determined because the cycles of $\mathcal{B}$ are unambiguous. 
To summarise: the initial iterator $\alpha$ is a uniquely determined permutation which repeatedly traverses the initial cycle,
and its length is both $\geq \bl$ and divisible by the length of the initial cycle.
The \emph{augmented terminal cycle} and the \emph{terminal iterator} are defined analogously, by considering the terminal cycle; the terminal iterator will be denoted by $\gamma$.

Now let
\begin{equation}
\label{eq:ccl}
\cl:=\max(|\alpha|,|\gamma|).
\end{equation}
We will focus on permutations $\sigma$ whose path $\paofpe(\sigma)$ starts at the beginning of the initial cycle, 
follows the augmented initial cycle a certain number $m\geq \cl$ times, then traverses the connecting path, enters the augmented terminal cycle, and winds around it again $m$ times.
Clearly every such permutation can be written as
\[
\sigma=\alpha_m\dots \alpha_1\beta\gamma_1\dots\gamma_m,
\]
where
\begin{txtitemize}
\item
$\perm(\alpha_m)=\dots=\perm(\alpha_1)=\alpha$;
\item
$\perm(\gamma_1)=\dots=\perm(\gamma_m)=\gamma$;
\item
$\paofpe(\perm(\beta))$ is the connecting path of $\mathcal{B}$, and $|\beta|$ is $\bl$ larger than the length of the connecting path.
\end{txtitemize}
We call these \emph{balanced} permutations. The number $m$ will be called the \emph{cycle length} of $\sigma$.
A balanced permutation of cycle length $\cl$
 is called a \emph{core}.
Every balanced permutation contains a core as a factor.
There are only finitely many cores, since they all have the same length $\cl|\alpha|+|\beta|+\cl|\gamma|$.

\begin{example}
\label{exa:bicrun}
Let $\mathcal{B}$ be the bicycle shown in Figure \ref{fig:bicex}.
We have $\bl =4$.
The initial cycle is $1234\rightarrow 1234$; it has length $1$, so $r=1$ and $s=4$.
The initial iterator $\alpha$ has length which is $\geq \bl=4$ and is a multiple of $1$; thus $|\alpha|=4$ and
$\alpha=1234$.
Similar considerations for the
the terminal cycle $1423\rightarrow 4132\rightarrow1423$ yield the 
terminal iterator $\gamma=1423$.
Thus we have $\cl=\max(|\alpha|,|\gamma|)=4$.
A core permutation is shown in Figure \ref{fig:bicex}, and is equal to
\begin{multline*}
(\underbrace{1,2,3,4}_{\alpha_4},\underbrace{5,6,7,8}_{\alpha_3},
\underbrace{9,10,11,12}_{\alpha_2},\underbrace{13,14,15,16}_{\alpha_1},
\\
\underbrace{17,18,19,30,20,40,21,39}_{\beta},\\
\underbrace{22,38,23,37}_{\gamma_1},
\underbrace{24,36,25,35}_{\gamma_2},
\underbrace{26,34,27,33}_{\gamma_3},
\underbrace{28,32,29,31}_{\gamma_4}.
\end{multline*}

Consider the 20th entry with value $30$ in this permutation.
The bicycle stipulates its relationship with the point preceding it, and hence with all the points preceding it, as they form an increasing sequence.
Its relationship with the succeeding three points is also fixed. However, the relationship with the points after that is not determined, and can be arbitrary.
Therefore, by varying the value of this point, we may obtain a further 17 core permutations.
\end{example}

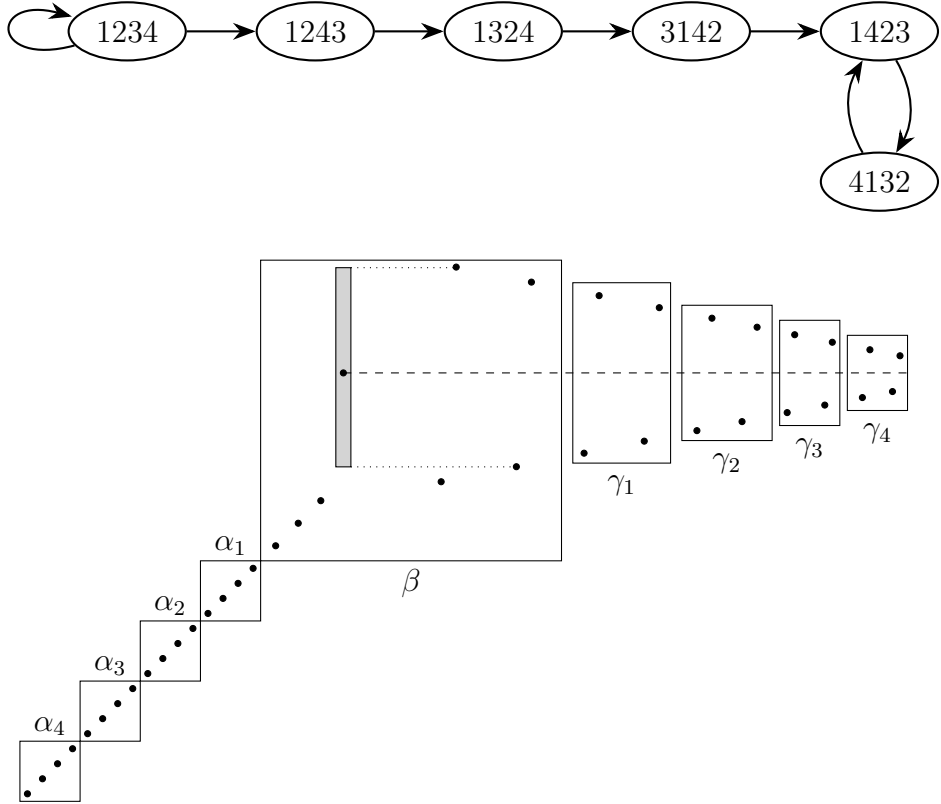
\begin{figure}
\begin{center}

\begin{tikzpicture}[> =  {Stealth [scale=1.3]}, thick]
\tikzstyle{every state}=[thick]
\node[state, shape = ellipse, minimum size = 15pt] (A) at (0,0) {1234};
\node[state, shape = ellipse, minimum size = 15pt] (B) at (2.5,0) {1243};
\node[state, shape = ellipse, minimum size = 15pt] (C) at (5,0) {1324};
\node[state, shape = ellipse, minimum size = 15pt] (D) at (7.5,0) {3142};
\node[state, shape = ellipse, minimum size = 15pt] (E) at (10,0) {1423};
\node[state, shape = ellipse, minimum size = 15pt] (F) at (10,-2) {4132};
\path[->]
(A) edge [loop left] node {}
(A) edge (B)
(B) edge (C)
(C) edge (D)
(D) edge (E)
(E) edge [bend left] (F)
(F) edge [bend left] (E)
;
\end{tikzpicture}
\\[6mm]
\begin{tikzpicture}

\draw [fill=lightgray]  (1,1.25 ) rectangle (1.2,3.9); 
\draw [dotted] (1.2, 1.25) -- (3.4,1.25);
\draw [dotted] (1.2, 3.9)--(2.6,3.9);
\draw (0,0) rectangle (4,4);
\node at (2,-0.3) {$\beta$};

\draw [fill] (0.2,0.2) circle (0.4mm);
\draw [fill] (0.5,0.5) circle (0.4mm);
\draw [fill] (0.8,0.8) circle (0.4mm);

\draw [fill] (1.1,2.5) circle (0.4mm);
\draw [dashed] (1.1,2.5)--(8.6,2.5);

%points on the upward line on the right
\foreach \x in {2.4,3.4,4.3,5.1,5.8,6.4,7.0,7.5,8,8.4}
\draw [fill] (\x,0.2*\x+0.572) circle (0.4mm);

%points on the downward line on the right
\foreach \x in {2.6,3.6,4.5,5.3,6,6.6,7.1,7.6,8.1,8.5}
\draw [fill] (\x,-0.2*\x+4.428) circle (0.4mm);

%\gamma rectangles
\draw (4.15,1.3) rectangle (5.45,3.7);
\node at (4.8,1) {$\gamma_1$};
\draw (5.6,1.6) rectangle (6.8,3.4);
\node at (6.2,1.3) {$\gamma_2$};
\draw (6.9,1.8) rectangle (7.7,3.2);
\node at (7.3,1.5) {$\gamma_3$};
\draw (7.8,2.0) rectangle (8.6,3.0);
\node at (8.2,1.7) {$\gamma_4$};

%\alpha rectangles
\draw (-0.8,-0.8) rectangle (0,0);
\node at (-0.4,0.2) {$\alpha_1$};
\draw (-1.6,-1.6) rectangle (-0.8,-0.8);
\node at (-1.2,-0.6) {$\alpha_2$};
\draw (-2.4,-2.4) rectangle (-1.6,-1.6);
\node at (-2,-1.4) {$\alpha_3$};
\draw (-3.2,-3.2) rectangle (-2.4,-2.4);
\node at (-2.8,-2.2) {$\alpha_4$};

\foreach \x in {0.1,0.3,0.5,0.7}
\foreach \y in {-0.8,-1.6,-2.4,-3.2}
{\draw [fill] (\x+\y,\x+\y) circle (0.4mm);}

\end{tikzpicture}

\caption{A bicycle and an associated core balanced permutation. Other core permutations are obtained by placing the indicated point elsewhere within the shaded rectangle.}

\label{fig:bicex}

\end{center}
\end{figure}

 A detailed structure analysis of balanced permutations will be the key technical step towards establishing the criterion for wqo.
The link with arbitrary permutations from $C_\mathcal{B}$ is via:

\begin{lemma}
Every permutation of $C_\mathcal{B}$ is a factor of a balanced permutation.
\end{lemma}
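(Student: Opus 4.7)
The plan is to extend the path $\paofpe(\sigma)$ associated with $\sigma$ in $\mathcal{B}$ to a longer path $\pi^\ast$ whose shape is exactly that of a balanced permutation, and then invoke Lemma~\ref{lem:2sub2} to realise $\sigma$ as a factor of a permutation in $\peofpa(\pi^\ast)$.

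First, I would fix $\sigma \in C_\mathcal{B}$ and set $\pi_\sigma := \paofpe(\sigma)$, a path in $\mathcal{B}$ running from some vertex $u$ to some vertex $v$. I would then choose a positive integer $m$, with $m \geq \cl$ and otherwise sufficiently large, and construct a path
\[
\pi^\ast = \pi_1\,\pi_\sigma\,\pi_2
\]
in $\mathcal{B}$, where $\pi_1$ runs from $v_1$ (the start vertex of the initial cycle) to $u$, and $\pi_2$ runs from $v$ to $w_1$ (the start vertex of the terminal cycle), chosen so that $\pi^\ast$ as a whole traces the augmented initial cycle $m$ times, then the connecting path $\tau$, then the augmented terminal cycle $m$ times; that is, $\pi^\ast$ has precisely the shape required of a path corresponding to a balanced permutation of cycle length $m$. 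The existence of $\pi_1$ and $\pi_2$ is straightforward, as a bicycle has a strictly forward flow: any vertex of $\mathcal{B}$ is reachable from $v_1$ by winding around the initial cycle and traversing a suitable prefix of $\tau$ and possibly part of the terminal cycle, and from any vertex we can continue forward to $w_1$ and then wind. A small case split on the locations of $u$ and $v$ (in the initial cycle, on $\tau$, or in the terminal cycle) is required, and within each case the number of edges of $\pi^\ast$ spent in each of the three regions must match the targets ($rsm$, $|\tau|$, $ls'm$ respectively); choosing $m$ large allows us to pad with extra windings on either side to hit these targets exactly.

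With $\pi^\ast$ in hand, I would apply Lemma~\ref{lem:2sub2} with $t=1$, the outer pieces being $\pi_1$ and $\pi_2$, $\tau_1 = \pi_\sigma$, and $\sigma_1 = \sigma$. The interior-length hypothesis is vacuous when $t = 1$, so the lemma produces a permutation
\[
\rho = \rho_1\,\nu_1\,\rho_2 \in \peofpa(\pi^\ast)
\]
with $\perm(\nu_1) = \sigma$, which exhibits $\sigma$ as a contiguous factor of $\rho$. Since $\paofpe(\rho) = \pi^\ast$ has by construction exactly the shape prescribed in the definition of a balanced permutation, $\rho$ is balanced, and the proof is complete. (The case in which one of the cycles of $\mathcal{B}$ is trivial is handled in the same way, omitting the corresponding block of windings.)

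The only non-routine step is the bookkeeping needed to lay out $\pi_1$ and $\pi_2$ so that the edge counts in the three regions match the balanced targets; I expect this to be fiddly but not genuinely difficult, since the one-directional structure of a bicycle allows arbitrary padding by cycle windings once $m$ is taken large enough.
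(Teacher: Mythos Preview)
Your proposal is correct and follows essentially the same approach as the paper. The paper's proof is the two-line sketch ``extend $\paofpe(\sigma)$ to a balanced path and use \ref{it:sp3}'', and since the second half of \ref{it:sp3} is derived from Lemma~\ref{lem:2sub2} with $t=1$, your invocation of that lemma is exactly the same mechanism; you have simply spelled out the path-extension and padding details that the paper leaves implicit.
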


\begin{proof}
Let $\sigma\in C_\mathcal{B}$. Consider $\pi=\paofpe(\sigma)$. Extend $\pi$ to a balanced path and use \ref{it:sp3}.
\end{proof}

We begin with the following easy consequence of unambiguity of cycles, which asserts that the relative position of two entries in two different copies of the same iterator depends only on the distance between the two copies.

\begin{lemma}
\label{la:dist}
In any balanced permutation $\sigma=\alpha_m\dots \alpha_1\beta\gamma_1\dots\gamma_m$ we have:
\[
\gamma_s|_i\leq \gamma_t|_j\Leftrightarrow \gamma_1|_i\leq \gamma_{t-s+1}|_j
\quad \text{for all } i,j\in [|\gamma|],\ 1\leq s\leq t \leq m.
\]
\end{lemma}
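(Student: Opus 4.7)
The plan is to leverage the unambiguity of the terminal cycle (which is a standing assumption throughout this section, since we are working with bicycles having no ambiguous cycles) to show that the factor $\perm(\gamma_s\gamma_{s+1}\dots\gamma_t)$ of $\sigma$ depends only on the difference $t-s$, not on $s$ and $t$ individually. The claimed equivalence will then reduce to reading off two specified entries in a single common permutation.

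First I would locate the subpath of $\paofpe(\sigma)$ corresponding to the factor $\perm(\gamma_s\gamma_{s+1}\dots\gamma_t)$ of $\sigma$. Writing $L:=|\alpha_m\dots\alpha_1\beta|$, the block $\gamma_s$ occupies positions $L+1+(s-1)|\gamma|$ through $L+s|\gamma|$; since $|\gamma|\geq \bl$, the $\bl$-factor starting at the first of these positions lies entirely within $\gamma_s$ and so equals $\perm(\gamma_s|_{[1,\bl]})=w_1$, the entry vertex of the augmented terminal cycle (because $\perm(\gamma_s)=\gamma$ and $\gamma$ was defined as the iterator starting at $w_1$). The subpath therefore has $(t-s+1)|\gamma|-\bl+1$ vertices, all on the terminal cycle, starting at $w_1$, and winding around the cycle exactly $(t-s+1)s'$ times, where $s'$ is such that $|\gamma|=r's'$ and $r'$ is the length of the terminal cycle. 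Running the same analysis on $\perm(\gamma_1\gamma_2\dots\gamma_{t-s+1})$ yields a subpath with the same length, the same starting vertex $w_1$, and the same number of windings; hence the two subpaths coincide as paths in $\mathcal{B}$.

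Because the terminal cycle is unambiguous, $\peofpa$ applied to this common subpath yields a single permutation, so
\[
\perm(\gamma_s\gamma_{s+1}\dots\gamma_t) \;=\; \perm(\gamma_1\gamma_2\dots\gamma_{t-s+1}).
\]
In the left-hand permutation the entry $\gamma_s|_i$ sits at position $i$ and $\gamma_t|_j$ at position $(t-s)|\gamma|+j$; in the right-hand permutation the entries $\gamma_1|_i$ and $\gamma_{t-s+1}|_j$ sit at the very same positions $i$ and $(t-s)|\gamma|+j$. Since $\perm$ preserves the comparison of entries at fixed positions, the two inequalities in the statement both translate to the single relation ``the $i$-th entry is $\leq$ the $((t-s)|\gamma|+j)$-th entry'' inside this common permutation, and are therefore equivalent.

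The main technical hurdle will be the position bookkeeping in the first step — specifically, confirming that the $\bl$-factors within consecutive $\gamma$-blocks, and those straddling the boundary between two such blocks, together trace the augmented terminal cycle independently of $s$. This rests on $|\gamma|\geq \bl$ together with the observation, implicit in the definition of a balanced permutation, that the path-view of the $\gamma$-portion is exactly the $m$-fold concatenation of the augmented terminal cycle. Once this alignment is made transparent, the remainder of the argument is purely formal.
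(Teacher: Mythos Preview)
Your proposal is correct and follows essentially the same approach as the paper: both arguments observe that the relevant factor of $\sigma$ traces a path on the terminal cycle that depends only on $t-s$, invoke unambiguity of the cycle to conclude the corresponding permutations coincide, and then read off the desired equivalence at fixed positions. The only cosmetic difference is that the paper works with the slightly smaller factor running from $\gamma_s|_i$ to $\gamma_t|_j$ (rather than the full block $\gamma_s\dots\gamma_t$), which spares the position bookkeeping you flag as the main technical hurdle.
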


\begin{proof}
If $s=t$ the assertion follows immediately from $\perm(\gamma_s)=\perm(\gamma_1)$. So suppose $s<t$.
Let $\rho_1$ be the factor of $\sigma$ corresponding to the subsequence starting at $\gamma_s|_i$ and ending at $\gamma_t|_j$,
and let $\rho_2$ be the
factor of $\sigma$ corresponding to the subsequence starting at $\gamma_1|_i$ and ending at $\gamma_{t-s+1}|_j$.
These two permutations trace the same path on the terminal cycle of $\mathcal{B}$. Since there are no ambiguous paths on the cycle by assumption, it follows that $\rho_1=\rho_2$, from which the assertion follows.
\end{proof}

\begin{lemma}
\label{la:twos}
Suppose $\sigma=\alpha_m\dots \alpha_1\beta\gamma_1\dots\gamma_m$ is a balanced permutation. Then the following hold:
\begin{thmenumerate}
\item
\label{la:twos1}
$\perm(\gamma_1\gamma_2)=\dots=\perm(\gamma_{m-1}\gamma_m)$.
\item
\label{la:twos2}
The sequence $(\gamma_1|_i,\gamma_2|_i,\dots,\gamma_m|_i)$ is monotone (increasing or decreasing) for every $i\in\bigl[|\gamma|\bigr]$.
\end{thmenumerate}
\end{lemma}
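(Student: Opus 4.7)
The plan is that both parts of Lemma \ref{la:twos} should follow fairly directly from Lemma \ref{la:dist}, which has already captured the key translation-invariance of relative order between copies of the iterator $\gamma$. The main task is to unpack what Lemma \ref{la:dist} says when the two copies we compare are consecutive ($t = s+1$).

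For part \ref{la:twos1}, I would fix $s\in[m-1]$ and aim to show $\perm(\gamma_s\gamma_{s+1})=\perm(\gamma_1\gamma_2)$. Both words have length $2|\gamma|$, so it suffices to compare the relative order of entries at any two positions. If both positions lie inside the first half (i.e.\ inside $\gamma_s$ vs.\ inside $\gamma_1$), or both inside the second half, then the comparison is governed by $\perm(\gamma_s)=\perm(\gamma_1)=\gamma$ and $\perm(\gamma_{s+1})=\perm(\gamma_2)=\gamma$, respectively, so the relative orders match. The only non-trivial case is one position in the first half and one in the second; this is exactly the content of Lemma \ref{la:dist} applied with $t=s+1$, which gives $\gamma_s|_i\leq\gamma_{s+1}|_j\Leftrightarrow\gamma_1|_i\leq\gamma_2|_j$. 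Combining the three cases yields the equality of patterns.

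For part \ref{la:twos2}, fix $i\in[|\gamma|]$ and consider the sequence $(\gamma_1|_i,\dots,\gamma_m|_i)$. Applying Lemma \ref{la:dist} with $j=i$ and $t=s+1$ gives
\[
\gamma_s|_i\leq\gamma_{s+1}|_i \Leftrightarrow \gamma_1|_i\leq\gamma_2|_i
\quad\text{for all } s\in[m-1].
\]
Since all entries of $\sigma$ are distinct, these inequalities are strict, and the equivalence holds with the same direction of inequality for every $s$. So if $\gamma_1|_i<\gamma_2|_i$ then the sequence is strictly increasing, and if $\gamma_1|_i>\gamma_2|_i$ then it is strictly decreasing.

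There is no real obstacle here beyond correctly invoking Lemma \ref{la:dist} and noting distinctness of entries; the work has effectively been done in establishing that lemma from unambiguity of the terminal cycle.
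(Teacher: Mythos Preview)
Your proposal is correct and follows essentially the same approach as the paper, which simply notes that \ref{la:twos1} is a special case of Lemma~\ref{la:dist} and that \ref{la:twos2} follows from \ref{la:twos1}. The only cosmetic difference is that you derive \ref{la:twos2} directly from Lemma~\ref{la:dist} rather than via \ref{la:twos1}, but the content is the same.
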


\begin{proof}
\ref{la:twos1} is a special case of Lemma \ref{la:dist},  and \ref{la:twos2} follows from \ref{la:twos1}. 
\end{proof}

We say that an entry of the terminal iterator $\gamma$ is \emph{increasing} or \emph{decreasing} depending on which alternative in
\ref{la:twos2} holds.
 Lemmas \ref{la:dist} and \ref{la:twos} remain valid when $\gamma$ is replaced by $\alpha$, 
 and so we can talk about increasing/decreasing entries of the initial iterator as well. 
 These words correspond to their intuitive meanings when  the sequence $\alpha_m\dots\alpha_1$ is read from right to left.

We now consider a pair of consecutive values $(\gamma|_i,\gamma|_j)$
of the terminal iterator $\gamma$,
where $\gamma|_j=\gamma|_i+1$, and discuss their possible relative positions in
the unique permutation $\gamma_1\gamma_2\in C_\mathcal{B}$ which is a juxtaposition of two copies of $\gamma$. 
We observe that of six different relative positions that the four values
$\gamma_1|_i $, $\gamma_1|_j$, $\gamma_2|_i$ and $\gamma_2|_j$ can theoretically assume, two are actually not possible:

\begin{lemma}
Let $\gamma_1\gamma_2\in C_\mathcal{B}$ be the unique permutation which is a juxtaposition of two copies of the terminal iterator $\gamma$.
Let $\gamma|_i$ and $\gamma|_j=\gamma|_i+1$ be a pair of consecutive values in $\gamma$. Then neither of the following two chains of inequalities is possible:
\[
\gamma_1|_i<\gamma_2|_i<\gamma_1|_j<\gamma_2|_j\quad \text{nor}\quad
\gamma_2|_i<\gamma_1|_i<\gamma_2|_j<\gamma_1|_j.
\]
\end{lemma}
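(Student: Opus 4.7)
The plan is to argue by contradiction. Suppose, for concreteness, that the first chain holds: $\gamma_1|_i<\gamma_2|_i<\gamma_1|_j<\gamma_2|_j$ (the second case is handled symmetrically). The strategy is to exhibit a permutation distinct from $\gamma_1\gamma_2$ with the same $\paofpe$-path, which contradicts the uniqueness of $\gamma_1\gamma_2\in C_\mathcal{B}$; this uniqueness is guaranteed by the assumption that the terminal cycle is unambiguous together with Lemma~\ref{lem:ambsub}, which extends unambiguity to every subpath of the cycle.

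First I would establish that $\gamma_1|_j$ and $\gamma_2|_i$ are consecutive integers in $[2|\gamma|]$. Because $\gamma|_j=\gamma|_i+1$ are consecutive ranks within $\gamma$, no value of $\gamma_1$ lies strictly between $\gamma_1|_i$ and $\gamma_1|_j$, and symmetrically no value of $\gamma_2$ lies strictly between $\gamma_2|_i$ and $\gamma_2|_j$. Hence the open interval $(\gamma_2|_i,\gamma_1|_j)$ contains no entry of $\gamma_1\gamma_2$ whatsoever, and since $\gamma_1\gamma_2$ realises every integer in $[2|\gamma|]$ this forces $\gamma_1|_j=\gamma_2|_i+1$.

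The candidate competitor $\sigma'$ is produced by transposing the entries at positions $j$ and $|\gamma|+i$, i.e.\ by swapping these two consecutive integer values. Both halves of $\sigma'$ remain pattern-isomorphic to $\gamma$, and $\sigma'\neq\gamma_1\gamma_2$. Because the swap involves consecutive integers, any $\bl$-window containing at most one of the two positions has its pattern preserved: no other entry of the window can have value lying in the unit gap, so every rank within the window is unchanged. If no window contains both positions, i.e.\ if $|\gamma|+i-j\geq\bl$, we obtain the desired contradiction at once.

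The main obstacle is the near-boundary case $|\gamma|+i-j\leq\bl-1$, in which some $\bl$-windows contain both positions and a single transposition would alter their internal patterns. To handle this, I would lift the argument to a longer juxtaposition $\gamma_1\cdots\gamma_n$ with $n$ sufficiently large (which still lies in $C_\mathcal{B}$ and is uniquely determined for the same reason): by Lemma~\ref{la:twos}\ref{la:twos1} the forbidden configuration is replicated between every consecutive pair $\gamma_s\gamma_{s+1}$, giving candidate consecutive-integer swaps $(\gamma_s|_j,\gamma_{s+1}|_i)$ for each $s$. Performing all of these transpositions simultaneously, and exploiting the cyclic periodicity of the cycle vertices captured by Lemma~\ref{la:dist}, the rank-flips in shared windows should cancel coherently, leaving the overall path unchanged. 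Verifying that this coordinated multi-swap genuinely preserves every $\bl$-window pattern is the delicate technical kernel of the proof.
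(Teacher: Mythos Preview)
Your opening is exactly right: establish that $\gamma_2|_i$ and $\gamma_1|_j$ are consecutive integer values in $\gamma_1\gamma_2$, swap them, and contradict unambiguity. You also correctly identify the obstruction: the two positions $j$ and $|\gamma|+i$ may lie within the same $\bl$-window.

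Your proposed fix, however, does not work. In an $n$-fold juxtaposition the pairs $(\gamma_s|_j,\gamma_{s+1}|_i)$ are \emph{no longer} consecutive in value once $n\ge 3$: from the assumed chain we get $\gamma_{s+1}|_i<\gamma_{s+2}|_i<\gamma_{s+1}|_j$ and $\gamma_{s+1}|_i<\gamma_s|_j<\gamma_{s+1}|_j$, so $\gamma_{s+2}|_i$ (and $\gamma_{s-1}|_j$) sit strictly between $\gamma_{s+1}|_i$ and $\gamma_s|_j$. A transposition of non-adjacent values can alter the pattern of a window containing only one of the two positions, so there is no ``coherent cancellation'' to appeal to. And for any window that contains \emph{both} endpoints of a single swap, the pattern of that window is genuinely changed regardless of what other swaps you perform; periodicity cannot undo a rank reversal inside a fixed window. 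So the multi-swap does not preserve $\paofpe$.

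The paper's remedy is much simpler than what you are attempting: pass to \emph{three} copies $\gamma_1'\gamma_2'\gamma_3'$ and swap the single pair $(\gamma_1'|_j,\gamma_3'|_i)$. One checks (using that no $\gamma_t'$ has an entry strictly between $\gamma_t'|_i$ and $\gamma_t'|_j$) that these two are consecutive in value in the three-copy permutation, and their positions $j$ and $2|\gamma|+i$ differ by more than $|\gamma|\ge\bl$, so no $\bl$-window can contain both. One swap, no boundary case.
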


\begin{proof}
Suppose $\gamma_1|_i<\gamma_2|_i<\gamma_1|_j<\gamma_2|_j$, the other chain being dual.
Consider the unique permutation $\rho=\gamma_1^\prime\gamma_2^\prime\gamma_3^\prime$
which is a juxtaposition of \emph{three} copies of $\gamma$.
Since 
\[
\perm(\gamma_1^\prime\gamma_2^\prime)=\perm(\gamma_2^\prime\gamma_3^\prime)=\perm(\gamma_1\gamma_2)
\]
we have
\[
\gamma_1^\prime|_i<\gamma_2^\prime|_i<\gamma_1^\prime|_j<\gamma_2^\prime|_j
\quad\text{and}\quad
\gamma_2^\prime|_i<\gamma_3^\prime|_i<\gamma_2^\prime|_j<\gamma_3^\prime|_j.
\]
Since for $t=1,2,3$, there are no values in $\gamma_t^\prime$ between $\gamma_t^\prime|_i$ and 
$\gamma_t^\prime|_j$, it follows that the values $\gamma_3^\prime|_i$ and $\gamma_1^\prime|_j$ are consecutive in $\rho$.
But they are also more than $\bl$ apart in position because $|\gamma_2|=|\gamma|=rs\geq \bl$. 
Hence the permutation $\rho^\prime$ obtained from $\rho$ by swapping these two values satisfies $\paofpe(\rho^\prime)=\paofpe(\rho)$, and hence $\rho^\prime\in C_\mathcal{B}$.
This means that the concatenation of three augmented terminal cycles is ambiguous, a contradiction.
\end{proof}

So we are left with the following four possibilities for the consecutive pair $(\gamma|_i,\gamma|_j)$:
\begin{txtitemize}
\item
\emph{increasing}: $\gamma_1|_i<\gamma_1|_j<\gamma_2|_i<\gamma_2|_j$; this is the case if and only if both $\gamma|_i$ and $\gamma|_j$ are increasing;
\item
\emph{decreasing}: $\gamma_2|_i<\gamma_2|_j<\gamma_1|_i<\gamma_1|_j$, if and only if $\gamma|_i$ and $\gamma|_j$ are decreasing;
\item
\emph{expanding}: $\gamma_2|_i<\gamma_1|_i<\gamma_1|_j<\gamma_2|_j$, if and only if $\gamma|_i$ is decreasing and $\gamma|_j$ is increasing;
\item
\emph{shrinking}: $\gamma_1|_i<\gamma_2|_i<\gamma_2|_j<\gamma_1|_j$, if and only if $\gamma|_i$ is increasing and $\gamma|_j$ is decreasing.
\end{txtitemize}

We extend this to the lower and upper edges of the permutation, where we talk about the pairs 
$(-\infty,1)$ and $(|\gamma|,\infty)$.
The pair $(-\infty,1)$ is \emph{expanding} if $\gamma_1|_1<\gamma_2|_1$,
\emph{shrinking} if $\gamma_2|_1<\gamma_1|_1$, and cannot be increasing or decreasing.
The definitions for $(|\gamma|,\infty)$ are analogous.
Again, the same analysis applies to the initial iterator as well.

\begin{example}
Let us return to the bicycle from Example \ref{exa:bicrun}, shown in Figure \ref{fig:bicex}.
The terminal iterator $\gamma=1423$ has five consecutive pairs $(-\infty,\gamma|_1)$, $(\gamma|_1,\gamma|_3)$,
$(\gamma|_3,\gamma|_4)$, $(\gamma|_4,\gamma|_2)$ and $(\gamma|_2,\infty)$.
Consider the unique juxtaposition of two copies of $\gamma$, namely
$18273645=\gamma_1\gamma_2$, where $\gamma_1=1827$, $\gamma_2=3645$.
From $\gamma_1|_1=1<\gamma_2|_1=3$, we conclude that $(-\infty,\gamma|_1)$ is an expanding pair.
Similarly, 
\[
\gamma_1|_1=1<2=\gamma_1|_3<3=\gamma_2|_1<4=\gamma_2|_3,
\]
and so $(\gamma|_1,\gamma|_3)$ is increasing.
The three remaining consecutive pairs
$(\gamma|_3,\gamma|_4)$, $(\gamma|_4,\gamma|_2)$ and $(\gamma|_2,\infty)$
are shrinking, decreasing and expanding, respectively.
The initial iterator $\alpha=1234$ also has five consecutive pairs;
the proper ones are all decreasing, while the boundary ones are expanding and shrinking respectively.
\end{example}

We can now use the above set-up to prove the following result which, combined with Lemma \ref{la:dist}, shows that the relative position of two points belonging to copies of the same iterator is completely determined by the core:

\begin{lemma}
\label{la:itcomp}
In any balanced permutation $\sigma=\alpha_m\dots \alpha_1\beta\gamma_1\dots\gamma_m$ we have:
\[
\gamma_1|_i\leq \gamma_s|_j\Leftrightarrow \gamma_1|_i\leq \gamma_\cl|_j\quad
\text{for all } i,j\in [|\gamma|],\  s\in (\cl,m].
\]
\end{lemma}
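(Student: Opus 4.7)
The starting move is to apply Lemma~\ref{la:dist} with $p=s-\cl+1$, $q=s$, which gives
\[
\gamma_1|_i\leq\gamma_\cl|_j\iff\gamma_{s-\cl+1}|_i\leq\gamma_s|_j.
\]
The assertion therefore reduces to
\[
\gamma_1|_i\leq\gamma_s|_j\iff\gamma_{s-\cl+1}|_i\leq\gamma_s|_j,
\]
i.e.\ to showing that the two column-$i$ entries $\gamma_1|_i$ and $\gamma_{s-\cl+1}|_i$ lie on the same side of $\gamma_s|_j$ in value. By Lemma~\ref{la:twos}\ref{la:twos2} the column-$i$ sequence is monotone; after a symmetric splitting I may assume $\gamma|_i$ is increasing, so $\gamma_1|_i<\gamma_{s-\cl+1}|_i$ and one implication is immediate. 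For the other I argue by contradiction: assume $\gamma_1|_i<\gamma_s|_j<\gamma_{s-\cl+1}|_i$; the equivalence above rewrites this as $\gamma_\cl|_j<\gamma_1|_i<\gamma_s|_j$, and monotonicity of the $j$-column (Lemma~\ref{la:twos}\ref{la:twos2}) lets me pass to the unique $t\in[\cl,s-1]$ with $\gamma_t|_j<\gamma_1|_i<\gamma_{t+1}|_j$.

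Two short uses of Lemma~\ref{la:dist} with $p=2$, $q=t+1$ dispose of the mixed-monotonicity cases. For example, if $\gamma|_j$ is increasing but $\gamma|_i$ is decreasing, Lemma~\ref{la:dist} gives $\gamma_2|_i<\gamma_{t+1}|_j$, and combined with $\gamma_1|_i<\gamma_{t+1}|_j$ this yields $\gamma_1|_i<\gamma_{t+1}|_j<\gamma_2|_i$, contradicting $\gamma_1|_i>\gamma_2|_i$; the other mixed configuration is dual. Hence I may assume $\gamma|_i$ and $\gamma|_j$ share the same monotonicity, and by a final symmetry I may further assume both are increasing.

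The main obstacle is the both-increasing case. Iterating Lemma~\ref{la:dist} propagates the sandwich to the chain
\[
\gamma_{t+k-1}|_j<\gamma_k|_i<\gamma_{t+k}|_j\qquad(k=1,\ldots,m-t),
\]
which threads the initial segment $\gamma_1|_i,\ldots,\gamma_{m-t}|_i$ of the $i$-column tightly, one entry per gap, through the tail segment $\gamma_{t+1}|_j,\ldots,\gamma_m|_j$ of the $j$-column. I expect the contradiction to come from exploiting $t\geq\cl\geq|\gamma|$: translating the sandwiches back via Lemma~\ref{la:dist} into a statement inside the single copy $\gamma_t$ exhibits $t-1$ distinct column-$j$ values $\gamma_{t+1}|_j,\ldots,\gamma_{2t-1}|_j$ lying between $\gamma_t|_j$ and $\gamma_t|_i$, alongside the $\gamma|_i-\gamma|_j-1$ intermediate entries already present in $\gamma_t$. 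Supplementing this with the analogous contributions from neighbouring copies and reading off the forced interleaving in terms of the consecutive-pair classification (increasing, decreasing, expanding, shrinking) developed in the preceding discussion, this configuration should be incompatible with $\gamma$ being a fixed iterator traversing an unambiguous cycle, thereby closing the argument.
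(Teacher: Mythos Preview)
Your reduction via Lemma~\ref{la:dist} to the sandwich $\gamma_\cl|_j<\gamma_1|_i<\gamma_s|_j$ is fine, and you are right that this forces $\gamma|_j$ to be increasing (so the ``mixed-monotonicity'' paragraph is both confused in its case structure and contains a sign slip, but this is not the real problem).  The real gap is the final paragraph: it is a hope, not an argument.  The claim that the sandwiches translate into ``$t-1$ distinct column-$j$ values \ldots\ inside the single copy $\gamma_t$'' is false as stated---those values live in copies $\gamma_{t+1},\ldots,\gamma_{2t-1}$, not in $\gamma_t$---and in any case your chain only runs for $k\leq m-t$, which need not reach $k=t$.  No counting contradiction emerges from what you have written, and the vague appeal to ``the consecutive-pair classification'' does not close the gap.

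The idea you are missing is exactly the one the paper exploits: look at \emph{all} entries of $\gamma$ whose values lie between $\gamma|_i$ and $\gamma|_j$, not just the two endpoints.  Assuming (after the paper's harmless symmetry) $\gamma|_i<\gamma|_j$, if any one of these intermediate entries $\gamma|_{t_l}$ is increasing then $\gamma_1|_i\leq\gamma_1|_{t_l}<\gamma_\cl|_{t_l}\leq\gamma_\cl|_j$ and likewise with $s$ in place of $\cl$, so both sides of the equivalence hold.  Otherwise every intermediate entry is decreasing, and then each consecutive pair in the chain is decreasing; stepping down one copy per pair gives $\gamma_1|_i>\gamma_2|_{t_2}>\cdots>\gamma_r|_j$, and since there are at most $|\gamma|\leq\cl$ such entries this yields $\gamma_1|_i>\gamma_\cl|_j>\gamma_s|_j$, so both sides fail.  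The bound $r\leq|\gamma|\leq\cl$ is precisely what makes $\cl$ the right threshold, and it enters through the number of \emph{intermediate values}, not through any threading of column entries across copies.  Your approach never examines the intermediate entries, which is why the contradiction never materialises.
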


\begin{proof}
If $i=j$ the statement follows from Lemma \ref{la:twos} \ref{la:twos2}, so we may suppose without loss of generality
that $\gamma|_i<\gamma|_j$.
Let the entries of $\gamma$ with values in the range $[\gamma|_i,\gamma|_j]$ be
$\gamma|_i=\gamma|_{t_1}<\gamma|_{t_2}<\dots<\gamma|_{t_r}=\gamma|_j$.
If at least one of the entries $\gamma|_{t_l}$ is increasing then we have
\[
\gamma_1|_i\leq \gamma_1|_{t_l}< \gamma_s |_{t_l} \leq \gamma_s |_j
\quad\text{and}\quad
\gamma_1|_i\leq \gamma_1|_{t_l}< \gamma_\cl |_{t_l} \leq \gamma_\cl |_j.
\]
Otherwise, all these entries are decreasing, and hence so are the $r-1$ consecutive pairs corresponding to them.
Bearing in mind that $r\leq |\gamma|\leq \cl<s$ we now have
\[
\gamma_1|_i=\gamma_1|_{t_1}>\gamma_2|_{t_2}>\dots >\gamma_r|_{t_r}=\gamma_r|_j
\geq \gamma_\cl |_j > \gamma_s|_j.
\]
This completes the proof of the lemma.
\end{proof}

\begin{example}
\label{exa:B321213cont}
Consider the bicycle $\mathcal{B}$ from Example \ref{exa:B321213}, shown in Figure \ref{fig:123132etc}.
Its terminal iterator is $\gamma=123$; its consecutive pairs
$(-\infty,\gamma|_1)$, $(\gamma|_1,\gamma|_2)$, $(\gamma|_2,\gamma|_3)$, $(\gamma|_3,\infty)$
are expanding, increasing, increasing and shrinking, respectively.
The infinite antichain exhibited in Example \ref{exa:B321213} can be viewed as 
repeatedly traversing the terminal cycle and adding an initial
point, so that it splits
the shrinking pair in a particular manner.
\end{example}

Formalising from Example \ref{exa:B321213cont}, we say that a shrinking pair $(\gamma|_i,\gamma|_j)$ is
\emph{splittable} if there exists a core $\kappa=\alpha_\cl\dots\alpha_1\beta\gamma_1\dots\gamma_\cl$
and an entry $p$ in $\alpha_\cl\dots\alpha_1\beta$
such that $\gamma_1|_i<p<\gamma_1|_j$.
This includes the possibility that one or both of the boundary pairs $(-\infty,1)$, $(|\gamma|,\infty)$ may be splittable.
A splittable pair of $\alpha$ is defined analogously; the inserted point needs to come from
$\beta\gamma_1\dots\gamma_\cl$.

\begin{lemma}
\label{la:decsplit}
It is decidable whether a given bicycle has splittable  pairs.
\end{lemma}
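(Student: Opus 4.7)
The plan is to reduce the question to a finite, effectively computable search. The first step is to extract from the input bicycle $\mathcal{B}$ all the objects that appear in the definition of splittability, namely $\bl$, the two iterators $\alpha$ and $\gamma$, the connecting portion $\beta$, and the integer $\cl=\max(|\alpha|,|\gamma|)$. All of these are uniquely determined and directly computable from $\mathcal{B}$, since the cycles of $\mathcal{B}$ are assumed unambiguous. From the (unique) juxtapositions of two copies of each iterator one can then classify every consecutive pair of $\alpha$ and of $\gamma$ as increasing, decreasing, expanding or shrinking, and so produce the finite lists of shrinking pairs of the two iterators.

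The second step, and the only one that requires a small observation, is that every core is a permutation associated to the \emph{same} fixed path $\pi_0$ in $\mathcal{B}$: namely, $\cl$ traversals of the augmented initial cycle, followed by the connecting path, followed by $\cl$ traversals of the augmented terminal cycle. Hence the set of all cores equals $\peofpa(\pi_0)$, a finite subset of the permutations of fixed length $N=\cl|\alpha|+|\beta|+\cl|\gamma|$. It can be computed in a brute-force way by enumerating all permutations of length $N$ and retaining those whose image under $\paofpe$ equals $\pi_0$ (alternatively, by following $\pi_0$ edge by edge and listing, at each ambiguous step, all compatible extensions).

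The third step is purely mechanical: for each core $\kappa=\alpha_\cl\dots\alpha_1\beta\gamma_1\dots\gamma_\cl$ and each shrinking pair $(\gamma|_i,\gamma|_j)$ of $\gamma$, check whether some entry $p$ of the prefix $\alpha_\cl\dots\alpha_1\beta$ of $\kappa$ satisfies $\gamma_1|_i<p<\gamma_1|_j$; perform the dual check for shrinking pairs of $\alpha$, using the suffix $\beta\gamma_1\dots\gamma_\cl$. The bicycle has a splittable pair iff at least one of these finitely many comparisons succeeds.

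The main obstacle, which is conceptually minor, is the second step: ensuring that all cores are captured by a single fixed path in $\mathcal{B}$, so that the search space of cores is finite and effectively enumerable. Once this is in place, every remaining task is a finite numerical check directly readable from the constructed data.
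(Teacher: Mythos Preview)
Your proposal is correct and follows essentially the same approach as the paper: the paper's proof is a one-sentence version of your argument, noting that splittable pairs are detected within cores, of which there are finitely many and which can be algorithmically constructed. Your write-up simply makes explicit what the paper leaves implicit, namely that all cores arise from a single fixed path $\pi_0$ and hence form a finite, effectively enumerable set.
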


\begin{proof}
Splittable pairs are manifested as such within cores, of which there are only finitely many, and which can be algorithmically constructed.
\end{proof}

\begin{lemma}
\label{la:splitac}
If $\mathcal{B}$ has a splittable pair, then $C_\mathcal{B}$ contains an antichain.
\end{lemma}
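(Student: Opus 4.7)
The plan is to construct, for each sufficiently large integer $m$, an element $\sigma_m \in C_\mathcal{B}$ that carries an \emph{intrinsic depth} equal to $m$, and to show that $\{\sigma_m\}$ forms an infinite antichain. Example \ref{exa:B321213} serves as a template: the sequence $(n-1, 1, 2, \dots, n-2, n)$ places an initial ``splitter'' of value $n-1$ that sits precisely between the two entries which mark the terminal edge of the permutation, and the length $n$ determines this splitter's value uniquely.

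First I would reduce to the case where the splittable pair lies in the terminal iterator $\gamma$; the other case is dual, obtained by reversing the roles of the initial and terminal cycles of the bicycle. Fix then a shrinking pair $(\gamma|_i, \gamma|_j)$ (possibly a boundary pair), a witnessing core $\kappa = \alpha_\cl \dots \alpha_1 \beta \gamma_1 \dots \gamma_\cl$, and an entry $p$ in the prefix $\alpha_\cl \dots \alpha_1 \beta$ of $\kappa$ satisfying $\gamma_1|_i < p < \gamma_1|_j$ (read one-sidedly if the pair is boundary). Since the pair is shrinking, Lemma \ref{la:dist} implies that along any balanced extension of $\kappa$ the values $\gamma_t|_i$ strictly increase with $t$ while $\gamma_t|_j$ strictly decrease, so the integer gap between them eventually collapses past $p$.

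For each large enough $m$ I would then construct $\sigma_m$ as a balanced-type permutation consisting of the connecting-path portion of $\kappa$ containing $p$, followed by exactly $m$ copies of $\gamma$, with the value of $p$ rescaled so that $\gamma_m|_i < p < \gamma_m|_j$ but no room remains at depth $m+1$. This is feasible because $\sigma_m$ can be made long enough to leave an integer in the pair at depth $m$; the positions of all other entries are then forced by the unambiguity of the cycles, appealing again to Lemma \ref{la:itcomp}. Thus $p$ witnesses a ``splitting event'' at depth exactly $m$, giving $\sigma_m$ a canonical combinatorial marker of its depth.

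To verify $\sigma_m \not\leq_f \sigma_n$ for $m < n$, I would show that any embedding must send the splitter of $\sigma_m$ to that of $\sigma_n$: an entry sitting inside one of the iterator copies of $\sigma_n$ cannot play the splitter's role, because by Lemma \ref{la:itcomp} its comparison with all later $\gamma$-copies is forced and hence cannot match the splitting-at-depth-$m$ behaviour demanded of $p_m$. Once the splitters are matched, the number of $\gamma$-copies on each side of the splitter is an invariant of the permutation, forcing $m = n$. The main obstacle is making this rigidity precise: one must rule out every entry of $\sigma_n$ other than its splitter as a candidate image of $p_m$, using both the strict monotonicity from shrinking and the unambiguity of the terminal cycle to pin down the unique value-pattern that characterises a splitter. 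Some additional bookkeeping is needed for the boundary-pair cases, where splitting degenerates to a one-sided extremality condition, but the proof template is the same.
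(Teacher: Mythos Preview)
Your overall strategy matches the paper's: build permutations that share the connecting-path prefix but carry a splitter whose value encodes the number of terminal-iterator copies, and show these are pairwise incomparable. However, the construction as you describe it has a genuine gap.

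You place the splitter so that $\gamma_m|_i < p < \gamma_m|_j$ ``with no room at depth $m+1$''. The trouble is that the interval $(\gamma_m|_i,\gamma_m|_j)$ is the \emph{innermost} layer of the shrinking funnel in $\sigma_m$, and for $n>m$ the splitter $p_n$ of $\sigma_n$ also lies in this interval (indeed $\gamma_m|_i<\gamma_n|_i<p_n<\gamma_n|_j<\gamma_m|_j$). Since $\gamma_{m+1}$ is not present in $\sigma_m$, your ``no room at depth $m+1$'' condition is invisible inside $\sigma_m$ itself: it constrains $p$ relative to hypothetical entries that the permutation does not contain. Consequently nothing you have written rules out $\perm(\sigma_m)$ coinciding with the length-$|\sigma_m|$ prefix of $\perm(\sigma_n)$, and your antichain claim is not established. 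Your appeal to Lemma~\ref{la:itcomp} to force ``splitter maps to splitter'' is also not doing the work you want: that lemma compares two \emph{iterator} entries with one anchored in $\gamma_1$, and does not by itself prevent an iterator entry from mimicking the splitter's behaviour over a window of length $m$.

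The paper repairs this with two sharp choices. First, it takes $p$ to be the \emph{rightmost} entry of $\alpha_\cl\dots\alpha_1\beta$ lying in $(\gamma_1|_i,\gamma_1|_j)$, so that the suffix $\delta=p\delta_1$ contains no other such entry and the embedding of $\sigma_m$ into $\sigma_n$ is forced to be a \emph{prefix} embedding (both paths traverse the connecting path exactly once). Second---and this is the key move you are missing---it places the new first entry $p_m'$ not inside the innermost gap but in the annulus $(\gamma_m|_j,\gamma_{m-1}|_j)$, so that $p_m'>\gamma_m|_j$ is a relation visible in $\sigma_m$. For $n>m$ one then has $p_n'<\gamma_{n-1}|_j\leq\gamma_m|_j$, directly contradicting the prefix embedding. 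Placing the splitter just \emph{outside} the $m$-th layer, rather than inside it, is what makes the depth readable from within the permutation.
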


\begin{proof}
Without loss of generality suppose that 
there is a splittable pair $(\gamma|_i,\gamma|_j)$ within
the terminal iterator $\gamma$.
Furthermore, also without loss, we may assume that this terminal iterator is not $(|\gamma|,\infty)$.
Consider a core $\kappa=\alpha_\cl \dots \alpha_1\beta\gamma_1\dots\gamma_\cl $ in 
which there exists a point $p$ of $\alpha_\cl \dots \alpha_1\beta$ such that
$\gamma_1|_i<p<\gamma_1|_j$.
Take $p$ to be the rightmost such point, and consider the subsequence of $\kappa$ which begins at $p$ and ends with the first copy of the iterator.
This subsequence has the form $\sigma_1=\delta\gamma_1=p\delta_1\gamma_1$.
Successively extend $\sigma_1$ by further copies of the iterator on the right to obtain an infinite sequence of sequences $\sigma_m=\delta\gamma_1\dots\gamma_m$, $m=1,2,\dots$.
Each $\sigma_m$ starts with $p$, it is an initial segment of $\sigma_{m+1}$,
and $\perm(\sigma_m)\in C_\mathcal{B}$.

Consider an arbitrary $\sigma_m$.
By the rightmost choice of $p$, there are no other points in $\delta$ with value between
$\gamma_1|_i$ and $\gamma_1|_j$.
Since $\gamma|_i$, $\gamma|_j$ are consecutive in $\gamma$, there are no points in
any $\gamma_t$ between
$\gamma_t|_i$ and $\gamma_t|_j$.

Now we modify $\sigma_m$ to obtain two sequences $\sigma_m^\prime$, 
$\sigma_m^{\prime\prime}$, and a permutation $\sigma_m^{\prime\prime\prime}$, as follows:
\begin{txtitemize}
 \item
 $\sigma_m^\prime$ is obtained from $\sigma_m$ by removing the initial entry $p$;
\item
$\sigma_m^{\prime\prime}$ is obtained by inserting a new initial entry $p_m^{\prime}$ into 
$\sigma_m^\prime$, where $\gamma_m|_j<p_m^{\prime}<\gamma_{m-1}|_j$; 
this can be done because $\gamma|_j\neq\infty$ and is illustrated in Figure \ref{fig:insert};
\item
$\sigma_m^{\prime\prime\prime}=\perm(\sigma_m^{\prime\prime})$.
\end{txtitemize}
From the preceding analysis we have $\paofpe(\sigma_m^{\prime\prime\prime})=\paofpe(\perm(\sigma_m))$ and hence
$\sigma_m^{\prime\prime\prime}\in C_\mathcal{B}$.

We claim that $\{\sigma_m^{\prime\prime\prime}\::\: m=1,2,\dots\}$ is an antichain.
Indeed, suppose that for some $m<n$ we have
$\sigma_m^{\prime\prime\prime}\leq_f\sigma_n^{\prime\prime\prime}$.
Since both paths $\paofpe(\sigma_m^{\prime\prime\prime})$ and $\paofpe(\sigma_n^{\prime\prime\prime})$
traverse $\paofpe(\perm(\delta))$ before entering the terminal cycle, it follows that
$\sigma_m^{\prime\prime\prime}$ embeds into $\sigma_n^{\prime\prime\prime}$ as a prefix.
However, this is not possible, because in $\sigma_m^{\prime\prime\prime}$ we have
$\gamma_m|_j<p_m^{\prime}$, whereas in
$\sigma_n^{\prime\prime\prime}$ we have
$p_n^{\prime}<\gamma_{n-1}|_j\leq\gamma_m|_j$.
This proves the claim and with it the lemma.
\end{proof}

\begin{figure}
\begin{center}
\begin{tikzpicture}

\draw [fill=lightgray] (1.5,1) rectangle (4,3);
\draw  (1.5,0) rectangle (4,4);
\node at (2.75,-0.4) {$\delta^\prime$};

\draw [fill=lightgray] (4.2,1) rectangle (5.7,3);
\draw (4.2,0) rectangle (5.7,4);
\node at (4.95,-0.4) {$\gamma_1$};

\draw [fill=lightgray] (5.9,1.3) rectangle (7.4,2.7);
\draw (5.9,0) rectangle (7.4,4);
\node at (6.65,-0.4) {$\gamma_2$};

\draw [fill=lightgray] (9,1.6) rectangle (10.5,2.4);
\draw (9,0) rectangle (10.5,4);
\node at (9.75,-0.4) {$\gamma_{m-1}$};

\draw [fill=lightgray] (10.7,1.9) rectangle (12.2,2.1);
\draw (10.7,0) rectangle (12.2,4);
\node at (11.45,-0.4) {$\gamma_m$};

\draw [fill] (4.7,1) circle (0.7mm);
\node at (4.9,0.6) {$\gamma_1|_i$};
\draw [fill] (5.2,3) circle (0.7mm);
\node at (5.1,3.4) {$\gamma_1|_j$};

\draw [fill] (6.4,1.3) circle (0.7mm);
\node at (6.6,0.9) {$\gamma_2|_i$};
\draw [fill] (6.9,2.7) circle (0.7mm);
\node at (6.8,3.1) {$\gamma_2|_j$};

\draw [fill] (9.5,1.6) circle (0.7mm);
\node at (9.7,1.2) {$\gamma_{m-1}|_i$};
\draw [fill] (10,2.4) circle (0.7mm);
\node at (9.9,2.8) {$\gamma_{m-1}|_j$};

\draw [fill] (11.3,1.9) circle (0.7mm);
\node at (11.5,1.5) {$\gamma_m|_i$};
\draw [fill] (11.6,2.1) circle (0.7mm);
\node at (11.7,2.6) {$\gamma_m|_j$};

\draw [fill] (1.2,2.25) circle (0.7mm);
\node at (0.8,2.25) {$p_m^\prime$};
\draw [dashed] (1.2,2.25)--(11.4,2.25);

\node at (8.2,1.8) {$\dots$};

\end{tikzpicture}
\caption{Inserting  $p_m^\prime$ into $\sigma_m^\prime$ to construct $\sigma_m^{\prime\prime}$.}
\label{fig:insert}
\end{center}
\end{figure}

Having seen that splittable pairs are an obstacle to wqo, we now start working towards the converse: no splittable pairs (and no ambiguous cycles) imply wqo.
In the following sequence of lemmas (\ref{la:evex}--\ref{la:corwqo})
we will assume that $\mathcal{B}$ is a bicycle with no ambiguous cycles and no splittable pairs, and
let $C_\mathcal{B}:=\peofpa(\Path(\mathcal{B}))$ be the associated set of permutations.
The key technical fact will be that
 a balanced permutation is uniquely determined by its core and the cycle length  (Lemma \ref{la:core}).
We begin by showing that in a balanced permutation every point is `captured' inside a core copy of an expanding pair.

\begin{lemma}
\label{la:evex}
Let $\alpha_m\dots\alpha_1\beta\gamma_1\dots\gamma_m$ be a balanced permutation.
For every entry $u$ in $\alpha_m\dots\alpha_1\beta$ there exist $t\in [\cl ]$ and an expanding  pair 
$(\gamma|_p,\gamma|_q)$
(including the possibilities $(-\infty,1)$ and $(|\gamma|,\infty)$)  such that $\gamma_t|_p<u<\gamma_t|_q$.
\end{lemma}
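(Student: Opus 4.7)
The plan is to argue by contradiction. Suppose that for every $t\in[\cl]$ the unique consecutive pair $(\gamma|_{p_t},\gamma|_{q_t})$ of $\gamma$ (possibly a boundary pair) with $\gamma_t|_{p_t}<u<\gamma_t|_{q_t}$ is not expanding, and hence is increasing, decreasing, or shrinking.

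First I would analyse how $(p_t,q_t)$ evolves with $t$ using Lemma \ref{la:twos}. If the pair at step $t$ is increasing then $\gamma_{t+1}|_{p_t}>\gamma_t|_{q_t}>u$, so the slot at $t+1$ sits strictly above $u$ and $(p_{t+1},q_{t+1})$ corresponds to a pair strictly lower in the $\gamma$-value order; dually for decreasing. A shrinking pair either keeps $(p_t,q_t)$ fixed or makes $u$ escape into a neighbouring pair, while an expanding pair would lock the walk in place. Since only finitely many consecutive pairs of $\gamma$ exist and no expanding pair is ever hit by assumption, a pigeonhole argument combined with the monotonicity of the count $N_t(u)=|\{i:\gamma_t|_i\leq u\}|$ along runs of increasing or decreasing pairs forces the walk to land on a shrinking pair at some $t_0\in[\cl]$.

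The contradiction then comes from the hypothesis of no splittable pairs. If $u\in\alpha_\cl\cdots\alpha_1\beta$, I would take $\kappa$ to be the sub-permutation of $\sigma$ on the positions occupied by $\alpha_\cl\cdots\alpha_1\beta\gamma_1\cdots\gamma_\cl$; this is a core, and taking sub-permutations preserves relative order, so the image of $u$ in $\kappa$ lies in $\alpha_\cl'\cdots\alpha_1'\beta'$ strictly between $\gamma_1'|_{p_{t_0}}$ and $\gamma_1'|_{q_{t_0}}$, witnessing splittability of $(\gamma|_{p_{t_0}},\gamma|_{q_{t_0}})$. For $u=\alpha_k|_{i'}$ with $k>\cl$ I would invoke the $\alpha$-analogue of Lemma \ref{la:twos}: the sequence $(\alpha_1|_{i'},\dots,\alpha_m|_{i'})$ is monotone, so its values cross the shrinking slot in a single contiguous run of indices. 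If that run meets $[\cl]$ then some $\alpha_{k'}|_{i'}$ with $k'\leq\cl$ already lies in the slot, reducing to the previous sub-case; otherwise I would use Lemma \ref{lem:2sub2} to construct an auxiliary core that retains the $\gamma_1$-block pattern and plants a corresponding entry in its inner alpha block, which again witnesses splittability.

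The main obstacle is precisely this outer-alpha sub-case: splittability is phrased via cores, so an entry $u\in\alpha_k$ with $k>\cl$ does not directly yield a core-based witness, and the delicate step is to manufacture one via Lemma \ref{lem:2sub2} together with the monotone iterator structure on the $\alpha$-side.
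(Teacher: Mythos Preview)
Your contradiction strategy for the core case ($u\in\alpha_\cl\cdots\alpha_1\beta$) is sound in spirit, though the paper's argument is considerably simpler: since $\cl\geq|\gamma|>|\gamma|-1$, a direct pigeonhole on the $\cl$ pairs $(p_t,q_t)$ (assuming none is a boundary pair, which is handled separately) forces two indices $t_1<t_2$ with $(p_{t_1},q_{t_1})=(p_{t_2},q_{t_2})$. A pair that sandwiches the same $u$ at two different $t$'s cannot be increasing or decreasing, so it is expanding or shrinking, and shrinking contradicts the standing no-splittable-pairs hypothesis. Your walk analysis of how $(p_t,q_t)$ evolves is not needed and, as written, is under-specified: the claim that the count $N_t(u)$ is monotone along runs, combined with pigeonhole, does not by itself rule out oscillation between increasing and decreasing pairs.

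The genuine gap is in the outer-$\alpha$ case. Your proposed use of Lemma~\ref{lem:2sub2} to ``plant'' an entry into an auxiliary core does not work: that lemma lets you choose permutations over well-separated subpaths, but a core is a permutation over a \emph{fixed} path, and its structure on the $\gamma$-side is already forced by unambiguity of the cycles, so you cannot freely insert a point into a prescribed shrinking slot. The paper avoids contradiction altogether here. Having established the core case, it applies it to the core entry $\alpha_\cl|_l$ (same position as $u=\alpha_r|_l$, $r>\cl$) to obtain an expanding $\gamma$-pair $(\gamma|_p,\gamma|_q)$ with $\gamma_\cl|_p<\alpha_\cl|_l<\gamma_\cl|_q$. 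The key missing idea in your proposal is then to apply the \emph{dual} of the core case (on the $\alpha$-side) to the entry $\gamma_\cl|_p$, obtaining an expanding $\alpha$-pair $(\alpha|_g,\alpha|_h)$ with $\alpha_\cl|_g<\gamma_\cl|_p<\alpha_\cl|_h$. Now $\alpha|_h$ is increasing, so $\alpha_\cl|_h<\alpha_r|_h$; and since $\alpha|_g,\alpha|_h$ are consecutive in value and $\alpha_\cl|_l>\gamma_\cl|_p>\alpha_\cl|_g$, we get $\alpha|_l\geq\alpha|_h$ and hence $\alpha_r|_l\geq\alpha_r|_h$. Chaining, $\gamma_\cl|_p<\alpha_\cl|_h<\alpha_r|_h\leq u<\alpha_\cl|_l<\gamma_\cl|_q$, so the \emph{same} expanding pair captures $u$ at $t=\cl$. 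This double application of the core case, once on each side, is what replaces your attempted core-manufacturing step.
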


\begin{proof}
Let us first consider the case where $u$ is a core entry, i.e. it comes from $\alpha_\cl\dots\alpha_1\beta$.
For each $t\in [\cl ]$, there exists a consecutive pair $(\gamma|_{p_t},\gamma|_{q_t})$ such that
$u$ is sandwiched by the copy of the pair in $\gamma_t$,
i.e. $\gamma_t|_{p_t}<u<\gamma_t|_{q_t}$.
We split the considerations into three cases.

\textit{Case 1: $(\gamma|_{p_t},\gamma|_{q_t})=(-\infty,1)$ for some $t$.}
Recall that the pair $(-\infty,1)$ can only be shrinking or expanding.
In this instance it cannot actually be shrinking, for this would imply
$u<\gamma_t|_{q_t}\leq \gamma_1|_{q_t}$; since $u$ belongs to the core, this would make
$(\gamma|_{p_t},\gamma|_{q_t})$ into a splittable pair.
Hence $(\gamma|_{p_t},\gamma|_{q_t})$ is expanding, and the lemma is proved in this case.

\textit{Case 2: $(\gamma|_{p_t},\gamma|_{q_t})=(|\gamma|,\infty)$ for some $t$.}
This is dual to Case 1.

\textit{Case 3: $(\gamma|_{p_t},\gamma|_{q_t})\neq (-\infty,1),(|\alpha|,\infty)$ for all $t\in [\cl]$.}
Recall that by definition \eqref{eq:ccl} we have $\cl > |\gamma|-1$, and hence
by the pigeonhole principle there exist $t_1<t_2$ such that
$(p_{t_1},q_{t_1})=(p_{t_2},q_{t_2})$; call this pair $(p,q)$.
Since $u$ is sandwiched between
$\gamma_{t_1}|_p$ and $\gamma_{t_1}|_q$, and also between
$\gamma_{t_2}|_p$ and $\gamma_{t_2}|_q$,
it follows that the pair $(\gamma|_p,\gamma|_q)$
cannot be increasing or decreasing.
Yet again, it cannot be shrinking either, for this would imply
\[
\gamma_1|_p\leq \gamma_{t_1}|_p<u<\gamma_{t_1}|_q\leq \gamma_1|_q,
\]
and the pair would be splittable.
Therefore, this pair is expanding, and the proof is complete in the case where $u$ belongs to the core.

Suppose now that $u=\alpha_r|_l$ for some $\cl<r\leq m$ and some $l\in [|\alpha|]$.
Without loss of generality assume that $\alpha_l$ is a decreasing entry, so that, in particular
\begin{equation}
\label{eq:cap1}
u=\alpha_r|_l<\alpha_\cl|_l.
\end{equation}
By what we proved in the first part there is an expanding pair $(\gamma|_p,\gamma|_q)$ such that
\begin{equation}
\label{eq:cap2}
\gamma_\cl|_p<\alpha_\cl|_l<\gamma_\cl|_q.
\end{equation}
And by a dual of the first part, there is an expanding pair
$(\alpha_g,\alpha_h)$ such that 
\begin{equation}
\label{eq:cap3}
\alpha_\cl|_g<\gamma_\cl|_p<\alpha_\cl|_h.
\end{equation}
In particular, $\alpha_h$ is an increasing entry, and so
\begin{equation}
\label{eq:cap4}
\alpha_\cl|_h<\alpha_r|_h.
\end{equation}
Also, since $\alpha_g$ and $\alpha_h$ are consecutive by value in $\alpha$, and since
$\alpha_\cl|_l>\gamma_\cl|_p$, it follows that $\alpha_\cl|_l\geq \alpha_\cl|_h$; but then
$\perm(\alpha_\cl)=\perm(\alpha_r)$ yields
\begin{equation}
\label{eq:cap5}
\alpha_r|_l\geq \alpha_r|_h.
\end{equation}
Combining \eqref{eq:cap1}--\eqref{eq:cap5} we obtain
\[
\gamma_\cl|_p<\alpha_\cl|_h<\alpha_r|_h\leq \alpha_r|_l=u<\alpha_\cl|_l<\gamma_\cl|_q,
\]
completing the proof of the lemma.
\end{proof}

Next we use the previous lemma to show that the relative position between 
an entry $u$ from $\alpha_m\dots\alpha_1\beta$ and an iterator entry from $\gamma_1\dots \gamma_m$ is the same as the relative position between $u$ and a suitable core entry.

\begin{lemma}
\label{la:primea}
Let $\alpha_m\dots \alpha_1\beta\gamma_1\dots \gamma_m$ be a balanced permutation.
For every point $u$ from $\alpha_m\dots\alpha_1\beta$, every entry $\gamma|_i$ of $\gamma$, and every
$r\in [m]$, we have:
\[
u< \gamma_r|_i\Leftrightarrow u<\gamma_{\overline{r}}|_i \quad \text{where } \overline{r}:=\min(r,\cl ).
\]
\end{lemma}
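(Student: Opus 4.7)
The plan is to split according to whether $r\le \cl$ or $r>\cl$. In the first case $\overline{r}=r$ and the equivalence is tautological. The interesting case is $r>\cl$; my goal there is to show that, for a suitable core index $t\in[\cl]$, the truth value of the inequality $u<\gamma_s|_i$ is the \emph{same} for every $s\in[t,m]$. Once this is established, applying it at $s=\cl$ and at $s=r$ (both of which lie in $[t,m]$ since $t\le\cl<r\le m$) immediately yields the desired equivalence.

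The key tool is Lemma \ref{la:evex}, applied to the entry $u$ of $\alpha_m\dots\alpha_1\beta$: it produces some $t\in[\cl]$ and an expanding consecutive pair $(\gamma|_p,\gamma|_q)$ (possibly a boundary pair $(-\infty,1)$ or $(|\gamma|,\infty)$) with $\gamma_t|_p<u<\gamma_t|_q$. By the definition of \emph{expanding}, $\gamma|_p$ is a decreasing entry and $\gamma|_q$ is an increasing entry of $\gamma$, so Lemma \ref{la:twos} \ref{la:twos2} gives that $(\gamma_s|_p)_{s=1}^m$ is monotone decreasing and $(\gamma_s|_q)_{s=1}^m$ is monotone increasing. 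Consequently, for every $s\in[t,m]$,
\[
\gamma_s|_p\le\gamma_t|_p<u<\gamma_t|_q\le\gamma_s|_q,
\]
so the same pair continues to trap $u$ in every later copy of $\gamma$.

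Next I would classify an arbitrary index $i\in[|\gamma|]$. Since $\gamma|_p$ and $\gamma|_q$ take consecutive values in $\gamma$, we must have either $\gamma|_i\le\gamma|_p$ or $\gamma|_i\ge\gamma|_q$; and since $\perm(\gamma_s)=\perm(\gamma)$ for every $s$, this global inequality is inherited by each copy, yielding either $\gamma_s|_i\le\gamma_s|_p$ for all $s$, or $\gamma_s|_i\ge\gamma_s|_q$ for all $s$. Combining with the trap, for every $s\in[t,m]$ the first option gives $\gamma_s|_i<u$ and the second gives $\gamma_s|_i>u$. In either case the truth value of $u<\gamma_s|_i$ is constant on $s\in[t,m]$, which is exactly what I set out to prove.

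The only real obstacle is notational housekeeping for the boundary pairs $(-\infty,1)$ and $(|\gamma|,\infty)$, where one side of the expanding pair is absent and the corresponding inequality in the trap becomes vacuous. Under the natural conventions $\gamma_s|_p=-\infty$ or $\gamma_s|_q=+\infty$, every index $i$ automatically falls into the single surviving side of the dichotomy (since $1\le\gamma|_i\le|\gamma|$), so the argument above goes through verbatim.
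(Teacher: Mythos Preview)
Your proposal is correct and follows essentially the same approach as the paper: both invoke Lemma~\ref{la:evex} to trap $u$ inside a copy of an expanding pair $(\gamma|_p,\gamma|_q)$, then use the consecutiveness of $\gamma|_p,\gamma|_q$ in $\gamma$ to force the dichotomy $\gamma_s|_i\le\gamma_s|_p<u$ or $u<\gamma_s|_q\le\gamma_s|_i$ for all $s\ge t$. The paper compresses this into the single chain $u<\gamma_r|_i\Leftrightarrow\gamma_r|_q\le\gamma_r|_i\Leftrightarrow\gamma_\cl|_q\le\gamma_\cl|_i\Leftrightarrow u<\gamma_{\overline r}|_i$, while you spell out the monotonicity and the boundary-pair conventions more explicitly; the underlying argument is the same.
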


\begin{proof}
The statement is vacuous if $r\leq \cl $, so suppose $r>\cl $.
Using Lemma \ref{la:evex}, let us fix $t\in [\cl ]$ and an expanding pair $(\gamma|_p,\gamma|_q)$ 
so that $\gamma_t|_p<u<\gamma_t|_q$.
Thus no entries from $\gamma_{\cl+1}\dots \gamma_m$ are sandwiched between 
$\gamma_\cl |_p$ and $\gamma_\cl |_q$.
Therefore we have
\[
u< \gamma_r|_i \Leftrightarrow \gamma_r|_q\leq\gamma_r|_i \Leftrightarrow
\gamma_\cl |_q\leq\gamma_\cl |_i \Leftrightarrow
u< \gamma_{\overline{r}}|_i,
\]
as required.
\end{proof}

The dual statement with $\alpha$ and $\gamma$ interchanged is:

\begin{lemma}
\label{la:primeb}
Let $\alpha_m\dots \alpha_1\beta\gamma_1\dots \gamma_m$ be a balanced permutation.
For every point $u$ from $\beta\gamma_1\dots\gamma_m$, every entry $\alpha|_i$ of $\alpha$, and every
$r\in [m]$, we have:
\[
\pushQED{\qed} 
u< \alpha_r|_i\Leftrightarrow u<\alpha_{\overline{r}}|_i \quad \text{where } \overline{r}:=\min(r,\cl ). \qedhere
\popQED
\]
\end{lemma}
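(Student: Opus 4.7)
The plan is to prove Lemma \ref{la:primeb} as a direct dual of Lemma \ref{la:primea}, exploiting the symmetry of the bicycle under interchanging its initial and terminal cycles. The observation made just after Lemma \ref{la:twos} — that the analysis of $\alpha$ is carried out by reading $\alpha_m\dots\alpha_1$ from right to left — ensures that every structural ingredient used for $\gamma$ in Section \ref{sec:perms-wqo} (the monotonicity in Lemma \ref{la:twos}, the four-way classification of consecutive pairs into increasing, decreasing, expanding and shrinking, Lemmas \ref{la:dist} and \ref{la:itcomp}, and the notion of splittable pair) admits a verbatim mirror statement for $\alpha$, with the roles of $\alpha_m\dots\alpha_1\beta$ and $\beta\gamma_1\dots\gamma_m$ swapped.

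First I would record the dual of Lemma \ref{la:evex}: for every balanced permutation $\alpha_m\dots\alpha_1\beta\gamma_1\dots\gamma_m$ and every entry $u$ in $\beta\gamma_1\dots\gamma_m$, there exist $t \in [\cl]$ and an expanding pair $(\alpha|_p,\alpha|_q)$ of $\alpha$ (possibly a boundary pair $(-\infty,1)$ or $(|\alpha|,\infty)$) such that $\alpha_t|_p < u < \alpha_t|_q$. Its proof is a translation of the three-case argument for Lemma \ref{la:evex}: the two boundary pairs cannot be shrinking, for this would make them splittable via $u$; for proper pairs, the pigeonhole bound $\cl > |\alpha|-1$ supplied by \eqref{eq:ccl} forces two of the sandwiching consecutive pairs to coincide, again ruling out anything but expanding; and the extension from core entries to entries of $\gamma_{\cl+1}\dots\gamma_m$ is handled by the capture argument at the end of Lemma \ref{la:evex}, using that the dual of Lemma \ref{la:itcomp} pins down the relative position between a far-out $\gamma$-entry and a fixed $\alpha$-entry.

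Given this, the main argument is a verbatim transcription of the proof of Lemma \ref{la:primea}. The case $r \leq \cl$ is vacuous. For $r > \cl$, fix $t$ and an expanding pair $(\alpha|_p,\alpha|_q)$ with $\alpha_t|_p < u < \alpha_t|_q$; the expanding property together with monotonicity implies $\alpha_r|_p < u < \alpha_r|_q$, so no entry of $\alpha_{\cl+1}\dots\alpha_m$ falls between $\alpha_\cl|_p$ and $\alpha_\cl|_q$, and since $\alpha|_p$, $\alpha|_q$ are consecutive by value in $\alpha$, Lemma \ref{la:twos} yields the chain
\[
u < \alpha_r|_i \Leftrightarrow \alpha_r|_q \leq \alpha_r|_i \Leftrightarrow \alpha_\cl|_q \leq \alpha_\cl|_i \Leftrightarrow u < \alpha_{\overline{r}}|_i.
\]
The only real obstacle is bookkeeping: one must be careful that the iteration index of $\alpha_s$ increases as one moves \emph{leftward} through the permutation, so that "later copies" in the iterator sense correspond to positions further from the connecting path $\beta$. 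Since this convention is already fixed in the setup, no genuine new argument is required beyond the translation.
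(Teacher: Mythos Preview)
Your proposal is correct and follows exactly the paper's approach: the paper states Lemma \ref{la:primeb} as the dual of Lemma \ref{la:primea} and gives no separate proof (the \qed is placed in the statement itself). Your write-up simply spells out in detail the duality argument that the paper leaves implicit.
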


We consolidate this, and show that the comparison of any two entries from different `parts' of a balanced permutation can be reduced to the comparison between some corresponding core entries:

\begin{lemma}
\label{la:pull}
Let $\alpha_m\dots \alpha_1\beta\gamma_1\dots \gamma_m$ be a balanced permutation.
Then for 
all $i\in [|\alpha|]$, $j\in [|\beta|]$, $l\in [|\gamma|]$,
$r,s\in [m]$, we have
\begin{thmenumerate}
\item
\label{it:pu1}
$\alpha_r|_i<\beta|_j\Leftrightarrow \alpha_{\overline{r}}|_i<\beta|_j$, where $\overline{r}:= \min(r,\cl )$;
\item
\label{it:pu2}
$\beta|_j<\gamma_s|_l\Leftrightarrow \beta|_j<\gamma_{\overline{s}}|_l$, where
$\overline{s}:=\min(s,\cl )$;
\item
\label{it:pu3}
$\alpha_r|_i < \gamma_s|_j\Leftrightarrow
\alpha_{\overline{r}}|_i < \gamma_{\overline{s}}|_j$, where
$\overline{r}:= \min(r,\cl )$, and  $\overline{s}:=\min(s,\cl )$.
\end{thmenumerate}
\end{lemma}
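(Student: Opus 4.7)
The plan is to obtain all three parts of Lemma \ref{la:pull} as essentially immediate consequences of Lemmas \ref{la:primea} and \ref{la:primeb}, exploiting the fact that every entry of a balanced permutation lies either in the prefix $\alpha_m\dots\alpha_1\beta$, to which Lemma \ref{la:primea} applies, or in the suffix $\beta\gamma_1\dots\gamma_m$, to which Lemma \ref{la:primeb} applies. Throughout, the implicit swap between $u<v$ and $v<u$ (i.e.\ between the stated inequalities of the earlier lemmas and the ones needed here) is harmless, since all entries of a permutation are distinct, so strict inequalities are stable under negation.

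For part \ref{it:pu1}, I would take $u:=\beta|_j$, which lies in $\beta\gamma_1\dots\gamma_m$, and apply Lemma \ref{la:primeb} with the entry $\alpha|_i$ to conclude $\alpha_r|_i<\beta|_j \Leftrightarrow \alpha_{\overline{r}}|_i<\beta|_j$. Part \ref{it:pu2} is treated in exactly the same way, this time with $u:=\beta|_j$ viewed as a point of the prefix $\alpha_m\dots\alpha_1\beta$ and Lemma \ref{la:primea} applied to the entry $\gamma|_l$. For part \ref{it:pu3} I would chain the two lemmas. First apply Lemma \ref{la:primea} with $u:=\alpha_r|_i\in\alpha_m\dots\alpha_1\beta$ to obtain $\alpha_r|_i<\gamma_s|_j \Leftrightarrow \alpha_r|_i<\gamma_{\overline{s}}|_j$. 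Since $1\leq\overline{s}\leq\cl\leq m$, the entry $\gamma_{\overline{s}}|_j$ lies in $\beta\gamma_1\dots\gamma_m$, and Lemma \ref{la:primeb} now yields $\alpha_r|_i<\gamma_{\overline{s}}|_j \Leftrightarrow \alpha_{\overline{r}}|_i<\gamma_{\overline{s}}|_j$. Composing the two equivalences gives the claim.

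I do not expect any genuine obstacle here; the heavy lifting was done in Lemmas \ref{la:evex}--\ref{la:primeb}, and the present lemma is just the bookkeeping that consolidates them. The only small thing to check is that the inner entry $\gamma_{\overline{s}}|_j$ used in the chaining for part \ref{it:pu3} lies in the range of applicability of Lemma \ref{la:primeb}, which is immediate from the standing convention $m\geq\cl$ for balanced permutations. Accordingly, I would keep the write-up to a few lines, separately noting the two direct applications and then performing the composition for part \ref{it:pu3}.
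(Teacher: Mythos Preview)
Your proposal is correct and matches the paper's proof essentially verbatim: parts \ref{it:pu1} and \ref{it:pu2} are direct applications of Lemmas \ref{la:primeb} and \ref{la:primea} respectively, and part \ref{it:pu3} chains the two. The only cosmetic difference is that the paper reduces $r$ first and then $s$, whereas you reduce $s$ first and then $r$; both orders are valid for the same reasons you note.
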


\begin{proof}
Part \ref{it:pu1} follows from Lemma \ref{la:primeb}, and part \ref{it:pu2} from Lemma \ref{la:primea}.
For \ref{it:pu3}, we can use both lemmas to obtain
\[
\alpha_r|_i<\gamma_s|_j \Leftrightarrow \alpha_{\overline{r}}|_i<\gamma_s|_j
\Leftrightarrow \alpha_{\overline{r}}|_i<\gamma_{\overline{s}}|_j,
\]
as required.
\end{proof}

We can now
prove the promised result about a balanced permutation being determined by its cycle length and its core:

\begin{lemma}
\label{la:core}
For any two balanced permutations of cycle length $m$:
\[
\sigma=\alpha_m\dots\alpha_1\beta\gamma_1\dots\gamma_m
\quad\text{and}\quad
\overline{\sigma}=\overline{\alpha}_m\dots\overline{\alpha}_1\overline{\beta}\overline{\gamma}_1\dots\overline{\gamma}_m,
\]
we have
\[
\sigma=\overline{\sigma}
\quad\Leftrightarrow\quad
\perm(\alpha_\cl \dots\alpha_1\beta\gamma_1\dots\gamma_\cl )=
\perm(\overline{\alpha}_\cl \dots\overline{\alpha}_1\overline{\beta}\overline{\gamma}_1\dots\overline{\gamma}_\cl ).
\]
\end{lemma}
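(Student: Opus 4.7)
The $(\Rightarrow)$ direction is immediate: since the core sits as a factor at a fixed pair of positions in any balanced permutation of cycle length $m$, the identity $\sigma=\overline{\sigma}$ passes to the cores.

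For $(\Leftarrow)$ my plan is to use the fact that two permutations of equal length are identical iff the pairwise order comparisons of entries at matching positions agree. Since $\sigma$ and $\overline{\sigma}$ share the same length $m\lvert\alpha\rvert+\lvert\beta\rvert+m\lvert\gamma\rvert$ and the same aligned block decomposition $\alpha_m\dots\alpha_1\beta\gamma_1\dots\gamma_m$, I would verify, for each pair of positions $(x,y)$, that $\sigma|_x\leq\sigma|_y$ iff $\overline{\sigma}|_x\leq\overline{\sigma}|_y$, organising the verification by which blocks $x$ and $y$ lie in.

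I would then split into cases. For $x,y$ in the same $\alpha_r$-block, the comparison is governed by $\perm(\alpha_r)=\perm(\alpha)$, and the hypothesis that the cores coincide (and in particular contain a full $\alpha$-block) forces $\alpha=\overline{\alpha}$; entirely analogous reasoning covers two entries in a common $\gamma_s$-block, and two entries both inside $\beta$. For $x,y$ in different $\alpha$-blocks $\alpha_r,\alpha_{r'}$, I would apply the $\alpha$-analogue of Lemma \ref{la:itcomp} (together with Lemma \ref{la:dist}) to cap both block indices at $\cl$, reducing to a comparison among core entries; symmetrically for different $\gamma$-blocks. The three mixed cases, $\alpha_r$ against $\beta$, $\beta$ against $\gamma_s$, and $\alpha_r$ against $\gamma_s$, are each handled by the corresponding part of Lemma \ref{la:pull}, which once again caps the cycle indices at $\cl$. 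In every case the question reduces to a comparison between entries lying wholly inside the core, where the hypothesis closes the argument.

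The real obstacle has already been surmounted in the preparatory work: Lemmas \ref{la:evex}, \ref{la:primea}, \ref{la:primeb}, consolidated into Lemma \ref{la:pull}, provide exactly the reductions that the case analysis requires, and Lemma \ref{la:itcomp} handles intra-iterator comparisons. Consequently I expect the proof itself to be a routine bookkeeping exercise; the only place where modest care is needed is in invoking the $\alpha$-analogue of Lemma \ref{la:itcomp}, which the paper has stated only for $\gamma$ but which follows by the obvious symmetry between the initial and terminal cycles of the bicycle.
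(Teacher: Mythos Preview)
Your proposal is correct and follows essentially the same approach as the paper: both argue that every pairwise comparison in a balanced permutation reduces to a comparison between core entries, splitting into the cases (two $\gamma$-entries via Lemmas \ref{la:dist} and \ref{la:itcomp}, two $\alpha$-entries by duality, the mixed cases via Lemma \ref{la:pull}, and two $\beta$-entries directly). Your write-up is slightly more granular in separating same-block from different-block comparisons, but the structure and the lemmas invoked are identical to the paper's.
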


\begin{proof}
We show that how two points of $\alpha^{(m)}\dots\alpha^{(1)}\beta\gamma^{(1)}\dots\gamma^{(m)}$ compare to each other is uniquely determined by the core.
The comparison between two points from $\gamma_1\dots\gamma_m$
is uniquely determined by the core as a consequence of Lemmas \ref{la:dist} and \ref{la:itcomp}.
By duality, the comparison between two points from $\alpha_m\dots\alpha_1$ is uniquely determined by the core.
The comparison between a point from $\gamma_1\dots\gamma_m$ and a point from $\alpha_m\dots\alpha_1\beta$
is uniquely determined by the core via Lemma \ref{la:pull}.
The same lemma leads to the same conclusion for a point from $\beta$ and a point from $\alpha_m\dots\alpha_1$.
Finally, the comparison between two entries of $\beta$ is directly determined by the core, as $\beta$ forms a part of it.
\end{proof}

We now want to show that the set of all factors of all balanced permutations sharing the same core is wqo.
To this end we develop one more technical device.

So let $\kappa=\alpha_\cl \dots\alpha_1\beta\gamma_1\dots\gamma_\cl  $ be a core, and let $D_\kappa$ be the set of all balanced permutations from $C_\mathcal{B}$ with this core. 
We define a two-sided infinite sequence $\kappa^\ast$ of real numbers inductively as follows.
We start with $\kappa_0=\kappa$, and let $K_0$ be the set of all its entries, i.e. 
$K_0=\bigl[\cl |\alpha|+|\beta|+\cl |\gamma|\bigr]$.
Let $\alpha_{\cl +1}$, $\gamma_{\cl +1} $ be any sequences of pairwise distinct numbers from 
$\mathbb{R}\setminus K_0$ such that
\[
\perm(\alpha_{\cl +1}\alpha_\cl )=\perm(\alpha_\cl \alpha_{\cl -1})\quad\text{and}\quad
\perm(\gamma_\cl \gamma_{\cl +1})=\perm(\gamma_{\cl -1}\gamma_\cl ).
\]
Define $\kappa_1:= \alpha_{\cl +1}\kappa_0\gamma_{\cl +1}$.
Note that $\perm(\kappa_1)$ belongs to $C_\mathcal{B}$ and is balanced with cycle length $\cl +1$.
Hence $\perm(\kappa_1)$ is the unique element of $D_\kappa$ with this cycle length.

Generally, suppose we have defined a sequence $\kappa_i$ such that
$\perm(\kappa_i)$ is the unique member of $D_\kappa$ with cycle length $i$, and let $K_i$ be the set of its entries.
Let $\alpha_{i+1}$, $\gamma_{i+1}$ be two sequences using pairwise distinct numbers from
$\mathbb{R}\setminus K_i$ such that
\begin{multline*}
\perm(\alpha_{\cl +i+1}\alpha_{\cl +i})=\perm(\alpha_{\cl +i}\alpha_{\cl +i-1})
\quad\text{and}\\
\perm(\gamma_{\cl +i}\gamma_{\cl +i+1})=\perm(\gamma_{\cl +i-1}\gamma_{\cl +i}).
\end{multline*}
Define $\kappa_{i+1}:= \alpha_{\cl +i+1}\kappa_i\gamma_{\cl +i+1}$.
Note that $\perm(\kappa_{i+1})$ belongs to $C_\mathcal{B}$ and is balanced with cycle length $\cl +i+1$.
Hence $\perm(\kappa_{i+1})$ is the unique element of $D_\kappa$ with this cycle length.
Finally, we let $\kappa^\ast$ be the two-sided infinite sequence obtained as the limit of this process.
Without loss of generality we may assume that the initial copy of $\kappa$ inside $\kappa^\ast$ is
$\kappa=\kappa_0=\kappa^\ast|_{[1,l]}$, where $l:=|\kappa|=\cl |\alpha |+|\beta|+\cl |\gamma|$.

From the foregoing analysis it is clear that for each member of $D_\kappa$ there exists a subsequence of $\kappa^\ast$ that defines the same permutation.
Furthermore, every contiguous subsequence $\sigma$ of $\kappa^\ast$ is a subsequence of some $\kappa_i$, and hence defines a permutation that is a factor of a member of $D_\kappa$.

For a (finite or infinite) sequence $\sigma$ of distinct symbols, let $\Fact(\sigma)$ denote the set of all permutations corresponding to finite contiguous subsequences of $\sigma$.
Then from the above it follows that
\[
\bigcup_{\sigma\in D_\kappa} \Fact(\sigma)=\Fact(\kappa^\ast).
\]

Moreover, any permutation belonging to some $\Fact(\sigma)$ is equal to $\perm(\rho)$ where $\rho$ is a 
subsequence of $\kappa^\ast$ which contains at least one entry from $\kappa_0$.
This is immediate if $\paofpe(\sigma)$ traverses at least one edge from the connecting path of $\mathcal{B}$.
Otherwise $\paofpe(\sigma)$ is entirely contained within an iterated juxtaposition of the initial or terminal cycle
with itself.
But in that case we can choose a representative for $\sigma$ to end/start in the first augmented initial/terminal cycle.

From $\kappa_0=\kappa^\ast |_{[1,l]}$ we have
\[
\bigcup_{\sigma\in D_\kappa} \Fact(\sigma)=\bigcup \bigl\{ \perm(\kappa^\ast |_{[i,j]})\::\: i\leq l,\ j\geq 1\bigr\}.
\]
Furthermore,
\[
i\geq i_1\ \&\ j\leq j_1\Rightarrow [i,j]\subseteq [i_1,j_1]\Rightarrow
\perm(\kappa^\ast |_{[i,j]})\leq_f \perm(\kappa^\ast |_{[i_1,j_1]}).
\]
The set $(-\infty,l]\times [1,\infty)$ with the ordering
\[
(i,j)\leq (i_1,j_1)\Leftrightarrow i\geq i_1\ \&\ j\leq j_1
\]
is isomorphic to $\mathbb{N}\times\mathbb{N}$ under the natural component-wise ordering, and this latter set is wqo.
We have proved:

\begin{lemma}
\label{la:corwqo}
For any core $\kappa$, the set $D_\kappa$ of all factors of all balanced permutations with core $\kappa$ is wqo. \qed
\end{lemma}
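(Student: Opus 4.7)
The plan is to note that essentially all the work has already been done in the paragraph preceding the lemma, and to package it as a direct application of Dickson's lemma. Concretely, the preceding construction establishes the equality
\[
\bigcup_{\sigma\in D_\kappa}\Fact(\sigma)=\bigl\{\perm(\kappa^\ast|_{[i,j]}) \::\: i\leq l,\ j\geq 1\bigr\}
\]
together with the crucial monotonicity
\[
i\geq i_1\ \&\ j\leq j_1 \ \Rightarrow\ \perm(\kappa^\ast|_{[i,j]})\leq_f\perm(\kappa^\ast|_{[i_1,j_1]}).
\]
These two facts mean that the map $\Phi\colon(i,j)\mapsto \perm(\kappa^\ast|_{[i,j]})$ is a surjective order-preserving map from the poset $P:=((-\infty,l]\times[1,\infty),\preceq)$, with $(i,j)\preceq(i_1,j_1)$ defined by $i\geq i_1$ and $j\leq j_1$, onto the set whose wqo property we wish to establish.

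Next, I would observe that the reindexing $(i,j)\mapsto(l-i+1,j)$ is a poset isomorphism between $P$ and $\mathbb{N}\times\mathbb{N}$ under the usual componentwise ordering. By Dickson's lemma, $\mathbb{N}\times\mathbb{N}$ is well quasi-ordered, hence so is $P$.

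Finally, I would invoke the standard fact that any order-preserving image of a wqo poset is itself wqo. Explicitly, if $\{\tau_n\}_{n\geq 1}$ were an infinite antichain in the image, choose preimages $(i_n,j_n)\in\Phi^{-1}(\tau_n)$; then by wqo of $P$ there exist $n<n'$ with $(i_n,j_n)\preceq(i_{n'},j_{n'})$, whence $\tau_n\leq_f\tau_{n'}$ by monotonicity of $\Phi$, a contradiction. Well-foundedness follows by the same argument applied to an infinite strictly descending sequence.

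The only potential friction is notational: the set in the lemma statement is literally written as "$D_\kappa$" but is meant in the sense of $\bigcup_{\sigma\in D_\kappa}\Fact(\sigma)$ from the running construction. Beyond that bookkeeping, there is no substantive obstacle — the lemma is simply the conclusion that the explicitly parametrised infinite-sequence presentation $\kappa^\ast$ reduces the problem to Dickson's lemma on $\mathbb{N}^2$.
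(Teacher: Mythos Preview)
Your proposal is correct and follows exactly the paper's own approach: the paper's proof is in fact the paragraph immediately preceding the lemma (ending with ``We have proved:''), which establishes the surjection from $(-\infty,l]\times[1,\infty)$ onto the set in question, observes the monotonicity, and notes the isomorphism with $\mathbb{N}\times\mathbb{N}$. Your write-up simply makes explicit the name ``Dickson's lemma'' and the standard fact about order-preserving images of wqo posets, and you correctly flag the minor notational slip in the lemma statement.
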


Putting all this together we have:

\begin{lemma}
\label{la:wqob}
Let $\mathcal{B}$ be an induced bicycle of $\Gamma_\bl(S)$. The collection of permutations 
$\peofpa(\Path(\mathcal{B}))$ is wqo if and only if the following are satisfied:
\begin{thmenumerate}
\item
$\mathcal{B}$ has no ambiguous cycles; and
\item
$\mathcal{B}$ has no splittable pairs.
\end{thmenumerate}
\end{lemma}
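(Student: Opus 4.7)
The plan is to dispatch each direction by appealing to the machinery already assembled, with the main new content being the assembly itself.

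For the forward direction ($\Rightarrow$) I would argue the contrapositive and treat the two failure modes separately. If $\mathcal{B}$ contains an ambiguous cycle $\pi = \gamma_1\rightarrow\dots\rightarrow\gamma_t\rightarrow\gamma_1$, I would reuse verbatim the construction from the proof of Lemma \ref{la:wqonec} \ref{it:wqonec2}: it produces an in-out cycle inside $\Gamma_{\bl+l}(S)$ whose vertices are permutations all lying in $\peofpa(\Path(\mathcal{B}))$, because the underlying paths in $\Gamma_\bl(S)$ remain inside the ambiguous cycle of $\mathcal{B}$. Applying Theorem \ref{thm:grwqo} to the resulting in-out cycle and then property \ref{it:sp2} yields an infinite antichain in $C_\mathcal{B} := \peofpa(\Path(\mathcal{B}))$. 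If instead $\mathcal{B}$ has a splittable pair, Lemma \ref{la:splitac} produces an infinite antichain in $C_\mathcal{B}$ directly. Thus the failure of either condition defeats wqo.

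For the backward direction ($\Leftarrow$) I would exploit the core machinery. Assume $\mathcal{B}$ has neither ambiguous cycles nor splittable pairs, so the hypotheses under which Lemmas \ref{la:evex}--\ref{la:corwqo} were proved are in force. The earlier lemma stating that every permutation of $C_\mathcal{B}$ is a factor of a balanced permutation, together with the observation (immediate from the definition) that every balanced permutation contains a core as a factor, gives
\[
C_\mathcal{B} = \bigcup_{\kappa} D_\kappa,
\]
where $\kappa$ ranges over the (finitely many) cores and $D_\kappa$ is the set of factors of balanced permutations with core $\kappa$. By Lemma \ref{la:corwqo} each $D_\kappa$ is wqo, and a finite union of wqo sets is wqo. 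Hence $C_\mathcal{B}$ is wqo.

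The hard part of the overall argument has already been absorbed into Lemmas \ref{la:splitac} and \ref{la:corwqo}, so the only real obstacle for the present lemma is a bookkeeping one: in the forward direction I must be careful that the antichain witnessing failure of wqo actually lies inside $\peofpa(\Path(\mathcal{B}))$ (rather than merely inside some larger ambient class), and in the backward direction I must confirm that the finitely many cores really do exhaust all balanced permutations up to factors. Both checks are essentially inspections of the relevant constructions, so I expect no genuine difficulties.
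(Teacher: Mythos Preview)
Your proposal is correct and follows essentially the same approach as the paper: both directions are derived by citing the earlier lemmas (Lemmas \ref{la:wqonec} and \ref{la:splitac} for $\Rightarrow$, Lemma \ref{la:corwqo} and finiteness of cores for $\Leftarrow$). Your extra remark that one must verify the antichains produced actually lie inside $C_\mathcal{B}$ is a valid caution that the paper leaves implicit; the minor slip of writing $C_\mathcal{B}=\bigcup_\kappa D_\kappa$ rather than $C_\mathcal{B}\subseteq\bigcup_\kappa D_\kappa$ is harmless for the wqo conclusion.
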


\begin{proof}
($\Rightarrow$)
Lemmas \ref{la:wqonec}, \ref{la:splitac}.

($\Leftarrow$)
By Lemma \ref{la:corwqo}, the set of all factors of permutations from a single $D_\kappa$ is wqo.
Since there are only finitely many cores, and every permutation from $\peofpa(\Path(\mathcal{B}))$
is a factor of a balanced permutation, it follows that $\peofpa(\Path(\mathcal{B}))$ is a finite union of wqo sets, and hence is itself wqo.
\end{proof}

Finally we can prove the main theorem of this section:

\begin{thm}
\label{thm:permwqo}
Let $C$ be a finitely based downward closed set of permutations under the consecutive subpermutation ordering,
and let $\bl$ be the maximum length of a basis element of $C$.
Then $C$ is well quasi-ordered if and only if the following are satisfied:
\begin{thmenumerate}
\item
the factor graph $\Gamma(C)$ has a path-complete decomposition into bicycles $\mathcal{B}_1$, \dots, $\mathcal{B}_n$;
\item
$\Gamma(C)$ has no ambiguous cycles; and
\item
no bicycle $\mathcal{B}_i$  has a splittable pair.
\end{thmenumerate}
\end{thm}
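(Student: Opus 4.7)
The plan is to assemble this theorem directly from the structural lemmas accumulated in the section, organised around the decomposition
\[
C=C_{[\bl)}\cup\bigcup_{i=1}^n \peofpa(\Path(\mathcal{B}_i))
\]
recorded in \eqref{eq:fu}, which itself presupposes condition~(i). Because all the substantive technical work (analysis of balanced permutations, cores, expanding/shrinking pairs, the two-sided infinite sequence $\kappa^\ast$) has been carried out in the build-up to Lemma~\ref{la:wqob}, what remains is essentially a bookkeeping exercise; I do not anticipate a hard step.

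For the forward direction I would proceed by contraposition. If (i) fails, then $\Gamma(C)$ has no path-complete decomposition into bicycles and $C$ is not wqo by Lemma~\ref{la:wqonec}\ref{it:wqonec1}. If (ii) fails, Lemma~\ref{la:wqonec}\ref{it:wqonec2} again gives non-wqo. If (iii) fails, so some $\mathcal{B}_i$ admits a splittable pair, then Lemma~\ref{la:splitac} supplies an infinite antichain inside $\peofpa(\Path(\mathcal{B}_i))$, and since this set is contained in $C$, the antichain persists in $C$, contradicting wqo.

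For the converse, assume (i), (ii) and (iii). The set $C_{[\bl)}$ is finite and therefore trivially wqo. For each bicycle $\mathcal{B}_i$ in the decomposition, any cycle of $\mathcal{B}_i$ is in particular a cycle of $\Gamma(C)$; since ambiguity is an intrinsic property of the path (it depends only on the cardinality of $\peofpa(\pi)$, computed in $\Gamma(C)$), condition~(ii) rules out ambiguous cycles in $\mathcal{B}_i$. Combined with (iii), this verifies the hypotheses of Lemma~\ref{la:wqob}, so $\peofpa(\Path(\mathcal{B}_i))$ is wqo. Hence \eqref{eq:fu} expresses $C$ as a finite union of wqo sets, so $C$ is wqo.

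The only mild point of care is the remark that ambiguity of a cycle in a subgraph $\mathcal{B}_i$ coincides with ambiguity when the cycle is viewed inside $\Gamma(C)$; once this is noted, the proof is a one-line appeal each to Lemmas~\ref{la:wqonec}, \ref{la:splitac} and~\ref{la:wqob}, together with the union decomposition~\eqref{eq:fu}. Consequently no further ideas beyond those already developed in the section are needed.
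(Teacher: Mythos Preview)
Your proposal is correct and follows essentially the same approach as the paper's proof: for the forward direction the paper simply cites Lemmas~\ref{la:wqonec} and~\ref{la:wqob} (the latter's forward half being precisely Lemma~\ref{la:splitac}, which you invoke directly), and for the converse it uses the decomposition~\eqref{eq:fu} together with Lemma~\ref{la:wqob}, exactly as you do. Your explicit remark that ambiguity is intrinsic to the path (so that condition~(ii) on $\Gamma(C)$ transfers to each $\mathcal{B}_i$) is a point the paper leaves implicit but is indeed needed to apply Lemma~\ref{la:wqob}.
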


\begin{proof}
($\Rightarrow$)
Lemmas \ref{la:wqonec} and \ref{la:wqob}.

($\Leftarrow$)
Recalling \eqref{eq:fu}, we have
\[
C=C_{[\bl)}\cup\bigcup_{i=1}^n \peofpa(\Path(\mathcal{B}_i)).
\]
Each $\peofpa(\Path(\mathcal{B}_i))$ is wqo by Lemma \ref{la:wqob},
and $C_{[\bl)}$ is wqo because it is finite. Hence $C$ is a finite union of wqo sets, and is therefore itself wqo.
\end{proof}

And we can prove our final main result of the paper as an immediate corollary:

\begin{proof}[Proof of Theorem \ref{thm:perms} \ref{it:perms-wqo}]
The graph $\Gamma(C)$ can be effectively computed from $\beta_1,\dots,\beta_n$.
Then it can be decided whether it has a path-complete decomposition into bicycles, or, equivalently, whether it has an in-out cycle; see Theorem \ref{thm:grwqo}.
Whether there are any ambiguous cycles is decidable by computing all minimal ambiguous paths (cf. Lemmas \ref{lem:bound}, \ref{lem:ambdec}), and checking whether any of them are subpaths of a cycle.
Finally, whether there are any splittable shrinking pairs in any of the bicycles of $\Gamma(C)$ is decidable by Lemma \ref{la:decsplit}.
\end{proof}

\textbf{Acknowledgement.}
The authors would like to thank the two referees for their careful reading of the paper and 
suggestions that have improved it.

\bibliographystyle{plain}

\end{document}